\title{Levinson's theorem as an index pairing}
\author{Angus Alexander, 
Adam Rennie\thanks{email: 
\texttt{angusa@uow.edu.au, renniea@uow.edu.au}}
\\[3pt]
School of Mathematics and 
Applied Statistics, University of Wollongong,\\
Wollongong, Australia\\
}
\def\section{\@startsection{section}{1}{\z@}{-3.5ex plus -1ex minus
  -.2ex}{2.3ex plus .2ex}{\large\bf}}
\def\subsection{\@startsection{subsection}{2}{\z@}{-3.25ex plus -1ex
  minus -.2ex}{1.5ex plus .2ex}{\normalsize\bf}}
\numberwithin{equation}{section} %% needs `amsmath' package
\theoremstyle{plain} %% needs `amsmath' package
\newtheorem{thm}{Theorem}[section]
\newtheorem{lemma}[thm]{Lemma}
\newtheorem{cor}[thm]{Corollary}
\theoremstyle{definition} %% needs `amsmath' package
\newtheorem{defn}[thm]{Definition}
\newtheorem{ass}[thm]{Assumption}
\theoremstyle{remark} %% needs `amsmath' package
\newtheorem{rmk}[thm]{Remark}
\DeclareMathOperator{\Dom}{Dom}   %% domain of an operator
\newcommand{\eps}{\varepsilon} %% short for \varepsilon
\newcommand{\A}{\mathcal{A}}  %% an algebra
\newcommand{\B}{\mathcal{B}}  %% another algebra
\newcommand{\C}{\mathbb{C}}   %% complex numbers
\newcommand{\D}{\mathcal{D}}  %% a self-adjoint operator
\renewcommand{\d}{\mathrm{d}} %% \d a = [\D,a]
\newcommand{\e}{\mathrm{e}}
\newcommand{\F}{\mathcal{F}}  %% another bi/module
\newcommand{\Fi}{\mathcal{F}} %% or Fourier transform
\renewcommand{\H}{\mathcal{H}}  %% a Hilbert space
\newcommand{\K}{\mathcal{K}}  %% compact endomorphisms
\newcommand{\N}{\mathbb{N}}   %% natural numbers
\newcommand{\norm}[1]{\left|\left|#1\right|\right|} %% for norms
\newcommand{\ox}{\otimes}     %% tensor product
\newcommand{\R}{\mathbb{R}}   %% real numbers
\newcommand{\Sf}{\mathbb{S}}  %% sphere
\newcommand{\vp}{\varphi}
\newcommand{\pd}[2]{\frac{\partial#1}{\partial#2}} %% partial deriv
\newcommand{\stroke}{\mathbin|}     %% (for `\pair' and such)
\newcommand{\od}[2]{\frac{\mathrm{d}#1}{\mathrm{d}#2}}
\def\pairL_#1(#2|#3){{}_{#1}(#2\stroke#3)} %% hermitian pairing _B(s|t)
\def\pairR(#1|#2)_#3{(#1\stroke#2)_{#3}} %% hermitian pairing (s|t)_A
\def\scal<#1|#2>{\langle#1\stroke#2\rangle} %% scalar product <y|z>
\renewcommand{\epsilon}{\varepsilon}
\theoremstyle{definition}
\definecolor{MyBlue}{cmyk}{1,0.13,0,0.63}
\definecolor{MyGreen}{cmyk}{0.91,0,0.88,0.52}
\newcommand{\mylinkcolor}{MyBlue}
\newcommand{\mycitecolor}{MyGreen}
\newcommand{\myurlcolor}{black}
\begin{document}

\maketitle

\vspace{-2pc}

\begin{abstract}
We build on work of Kellendonk, Richard, Tiedra de Aldecoa and others to show that the wave operators for Schr\"{o}dinger scattering theory on $\R^n$ generically have a particular form. As a consequence, Levinson's theorem can be interpreted as the pairing of the $K$-theory class of the unitary scattering operator and the $K$-homology class of the generator of the group of dilations. 
\end{abstract}
\maketitle

\parindent=0.0in
\parskip=0.00in

%\tableofcontents

\parskip=0.06in

\section{Introduction}
\label{sec:intro}

Levinson's theorem gives the number $N$ of bound states
of the Schr\"{o}dinger operator $-\Delta+V$, for suitably decaying potential $V$, as the total phase shift \cite{levinson49}
\[
N=\frac{1}{\pi}(\delta(0)-\delta(\infty)).
\]
In this paper, for suitable potentials, we realise the number of bound states $N$ as the pairing between the $K$-homology class $[D_+]\in K^1(C_0(\R^+)\ox\K)$ of the generator of dilations $D_+$ on the half-line and the $K$-theory class $[S]\in K_1(C_0(\R^+)\ox\K)$ of the scattering operator $S$. In this language the number of bound states is  given by
\[
\langle[D_+],[S]\rangle=N.
\]

The topological interpretation of Levinson's theorem
was initiated in several low dimensional cases in \cite{kellendonk06, kellendonk08, kellendonk12}. 
The technique in \cite{kellendonk06, kellendonk08, kellendonk12} is to use the form of the wave operator to deduce that $W_-$ belongs to an algebra $E$ fitting in to an exact sequence, with $\K$ the compact operators, 
\[
0\to \K\to E\to C(\Sf^1)\to 0.
\] 
As a consequence one may directly compute that the $K$-theory boundary map (the Index map) $K_1(C(\Sf^1))\to K_0(\mathcal{K})$ takes the scattering operator to the index of the wave operator, which is known to be $-N$.

Realising $W_-$ as an element of the algebra $E$ also allows another topological interpretation, which is the starting point of this paper. 
Following \cite{kellendonk06, kellendonk08, kellendonk12,richard13},  we show that for suitably decaying potentials, the wave operators for Schr\"{o}dinger scattering on Euclidean space are generically of the form
\begin{align}
\label{eq:waveopform}
W_- &= \textup{Id}+\vp(D_n)(S-\textup{Id})+K.
\end{align}
Here $K$ is a compact operator, $D_n$ denotes the generator of the dilation group on $L^2(\R^n)$, $S = W_+^*W_-$ the unitary scattering operator, and $\vp: \R \to \C$ is given by 
\[
\vp(x) = \frac12 (1+\tanh{(\pi x)} - i \cosh{(\pi x)}^{-1}).
\] 
This form for the wave operator allows us to show that the number of bound states is equal to the index pairing between the $K$-theory class of the scattering operator and the $K$-homology class of the generator of dilations on the half-line. This index pairing requires care, as the algebras involved are nonunital, and we add the required detail in Section \ref{sec:pairing}.

That the wave operator has the form of Equation  \eqref{eq:waveopform} has been shown in numerous low-dimensional Euclidean examples by \cite{kellendonk06, kellendonk08, kellendonk12, richard13, richard13ii, richard21}. Similar formulae for the wave operators have been developed in various other scattering models including rank-one perturbations \cite{richard10}, the Friedrichs-Faddeev model \cite{isozaki12}, Aharonov-Bohm operators \cite{kellendonk11}, lattice scattering \cite{bellissard12}, half-line scattering \cite{inoue20}, discrete scattering \cite{inoue19}, scattering for an inverse-square potential \cite{inoue19ii} and for 1D Dirac operators with zero-range interactions \cite{richard14}.

%While it is intrinsically fascinating that the wave operators generically have the universal form of Equation \eqref{eq:waveopform}, it has been shown in numerous cases \cite{kellendonk06, kellendonk08, kellendonk12} that this form allows one to realise Levinson's theorem, a well known result from scattering theory relating the number of eigenvalues of a Schr\"{o}dinger operator to the total phase shift \cite{levinson49}, as a topological statement. 

%The technique in \cite{kellendonk06, kellendonk08, kellendonk12} is to use the form of the wave operator to deduce that $W_-$ belongs to an algebra $E$ fitting in to an exact sequence
%\[
%0\to \K\to E\to C(\Sf^1)\to 0.
%\] 
%As a consequence one may deduce that the boundary map (the Index map) $K_1(\Sf^1)\to K_0(\mathcal{K})$ takes the scattering operator to the index of the wave operator.

%Our approach to the topological interpretation of Levinson's theorem is the pairing between $K$-theory and $K$-homology. The form of the wave operator suggests that the number of bound states is the index pairing between the dilation on the half-line, and the scattering operator, \cite{HR}. 

As well as the formula for the wave operator, which depends on having potentials with reasonable decay, we require that $S(0)=\textup{Id}$. This assumption rules out dimension 1 quite generally, and moreover the formula for the wave operator is slightly different in this case, due to the special properties of the zero-sphere $\Sf^0$ \cite{kellendonk08}. The only other case ruled out by the assumption $S(0)=\textup{Id}$ is when there is a zero energy resonance in dimension 3. 

Two other cases are ruled out by the intransigence of the low energy asymptotic expansion of the resolvent. These are the two dimensional case in the presence of zero energy $p$-resonances \cite{richard13ii, richard21} and the case of resonances in dimension $n = 4$ (see Lemma \ref{lem:mult}).

One sad feature of our result is that the one case (of dimension $n=1$) where we would have the summability to apply the local index formula \cite{carey06} is explicitly ruled out. Trace class estimates of \cite[Section 8.1]{yafaev10} suggest that the scattering operator is sensitive to such summability data, and so the use of the local index formula in scattering theory is of interest for further study. Whilst we cannot directly use the local index formula, there are many analytic formulae for computing the number of bound states using scattering data, see \cite{bolle77} in dimension $3$, \cite[Theorem 6.3]{bolle88} in dimension $2$ and \cite[Theorem 3.66]{zworski19} for higher (odd) dimensions.

The layout of the paper is as follows. In Section \ref{sec:prelim} we introduce the relevant concepts from scattering theory and fix our notation. In particular we discuss resolvent expansions and how they can be used to analyse the scattering matrix. In Section \ref{sec:wave-op} we prove, via a number of technical results, that the wave operator in dimension $n=2$ and $n \geq 4$ is of the form of Equation \eqref{eq:waveopform}, with dimension $n=3$ having been considered already in \cite{richard13}. In Section \ref{sec:pairing} we use the universal form of the wave operator to reinterpret Levinson's theorem as an index pairing between the $K$-theory class of the scattering operator $S$ and the $K$-homology class of the generator of dilations on the half-line $D_+$.

{\bf Acknowledgements:} 
Both authors would like to thank Johannes Kellendonk for his hospitality and many useful discussions during the visit of the first author to Lyon. Both authors gratefully acknowledge numerous enlightening conversations with Serge Richard. Both authors thank Alan Carey for a careful reading of draft versions. The first author also acknowledges the support of an Australian Government Research Training Program (RTP) Scholarship. This project was supported by the ARC Discovery grant DP220101196.

%%%%%%%%%%%%%%%%%%%%%%%%%%%%%%%%%%%%%%%%%%%%%%%%%%%%%%%%%%%%%%%%%%%%%
%%%%%%%%%%%%%%%%%%%%%%%%%%%%%%%%%%%%%%%%%%%%%%%%%%%%%%%%%%%%%%%%%%%%%
%%%%%%%%%%%%%%%%%%%%%%%%%%%%%%%%%%%%%%%%%%%%%%%%%%%%%%%%%%%%%%%%%%%%%

\section{Preliminaries on scattering theory} \label{sec:prelim}
\label{sec:scats}

\subsection{Standing assumptions and notation}
\label{subsec:ass-notes}

Throughout this document we will consider the scattering theory on $\R^n$ associated to the operators
\[
H_0=-\sum_{j=1}^n\frac{\partial^2}{\partial x_j^2}=-\Delta
\quad\mbox{and}\quad
H=-\sum_{j=1}^n\frac{\partial^2}{\partial x_j^2}+V
\]
where the (multiplication operator by the) 
potential $V$ is real-valued and satisfies 
\begin{align}
\label{ass11}
|V(x)| &\leq C(1+|x|)^{-\rho}
\end{align}
for various values of $\rho$ depending on the particular result and the dimension $n$. 
We denote the Schwartz space $\mathcal{S}(\R^n)$ and its dual $\mathcal{S}'(\R^n)$ and recall the weighted Sobolev spaces 
\[
H^{s,t}(\R^n)= \Big\{f \in \mathcal{S}'(\R^n): \norm{f}_{H^{s,t}} := \norm{(1+|x|^2)^{\frac{t}{2}} (\textup{Id}-\Delta)^\frac{s}{2} f} < \infty \Big\}
\] 
with index $s \in \R$ indicating derivatives and $t \in \R$ associated to decay at infinity \cite[Section 4.1]{amrein96}. With $\langle\cdot,\cdot\rangle$ the Euclidean inner product on $\R^n$, we denote the Fourier transform by
\[
\F_n:L^2(\R^n)\to L^2(\R^n),\qquad [\F_n f](\xi)=(2\pi)^{-\frac{n}{2}}\int_{\R^n}e^{-i\langle x,\xi\rangle}f(x)\,\d x.
\]
Note that the Fourier transform $\F_n$ is an isomorphism from $H^{s,t}$ to $H^{t,s}$ for any $s,t \in \R$. We will frequently drop the reference to the space $\R^n$ for simplicity of notation, as well as the subscript $n$ from our Fourier transforms when the dimension is clear. We denote by $\mathcal{B}(\H_1,\H_2)$ and $\mathcal{K}(\H_1,\H_2)$ the bounded and compact operators from $\H_1$ to $\H_2$.
For $z \in \C \setminus \R$, we let
\[
R_0(z)=(H_0-z)^{-1},\qquad R(z)=(H-z)^{-1}
\]
and the boundary values of the resolvent are defined as
\begin{align}
R_0(\lambda \pm i0) &= \lim_{\eps \to 0}{R_0(\lambda \pm i \eps)}\quad\mbox{and}\quad R(\lambda \pm i0) = \lim_{\eps \to 0}{R(\lambda \pm i \eps)}.
\end{align}
The limiting absorption principle \cite[Theorem 5.3.7]{kuroda78} tells us that these boundary values exist in $\mathcal{B}(H^{-1,t},H^{1,-t})$ for any $t > \frac12$ and $\lambda \in (0,\infty)$. The operator $H_0$ has purely absolutely continuous spectrum, and in particular no kernel. The operator $H$ can have eigenvalues and for $V$ satisfying Assumption \eqref{ass11} with $\rho > 1$ we have that these eigenvalues are negative, or zero \cite[Theorem 6.1.1]{yafaev10}. We let $P_0$ be the kernel projection of $H$, which may be zero.

Much of the analysis of the asymptotics of $R(z)$ is simplified by studying the operator (whose two expressions are related  by resolvent identities)
\begin{equation}
T(z) = V(\textup{Id} +R_0(z)V)^{-1}=V-VR(z)V.
\label{eq:tee}
\end{equation}
The operator $T(z)$  appears in our analysis of the wave and scattering operators. The operator $T(z)$ also appears in the scattering amplitude (the integral kernel of $S-\textup{Id}$, see Theorem \ref{thm: stationary scattering operator}).

The one-parameter unitary group of dilations on $L^2(\R^n)$ is
given on $f\in L^2(\R^n)$ by
\begin{align}\label{defn:dilation}
[U_n(t)f](x) &= \e^{\frac{nt}{2}} f(\e^t x),\qquad t\in\R.
\end{align}
We denote the self-adjoint generator of $U_n$ by $D_n$.
The generator of the group $(U_+(t))$ of dilations on the half-line $\R^+$ is denoted $D_+$
(which is $D_1$ restricted to the positive half-line). The generators of the dilation groups are given by
\begin{align}
D_+ &= \frac{y}{i} \od{}{y} + \frac{1}{2i}{\rm Id},\qquad D_n=\sum_{j=1}^n\frac{x_j}{i}\frac{\partial}{\partial x_j}+\frac{n}{2i}{\rm Id}.
\end{align}
Since each of $D_+,D_n$ generate one-parameter groups, we can recognise functions of these operators. For $D_+$ and $\vp:\R\to\C$ a bounded function whose Fourier transform has rapid decay, we have
\begin{align*}
[\vp(D_+) g](\rho) &= (2\pi)^{-\frac12} \int_\R{[\Fi^* \vp](t) \e^{\frac{t}{2}} g(\e^t \rho)\, \d t},
\end{align*}
with a similar formula for $D_n$.

Several Hilbert spaces recur, and we adopt the notation (following \cite[Section 2]{jensen81} which contains an excellent discussion on the relations between the spaces and operators we introduce here)
\[
\H = L^2(\R^n),\quad \mathcal{P} = L^2(\Sf^{n-1}),\quad \H_{spec} = L^2(\R^+, \mathcal{P}) \cong L^2(\R^+) \otimes \mathcal{P}.
\] 
Here $\H_{spec}$ provides the Hilbert space on which we can diagonalise the free Hamiltonian $H_0$.

Since $V$ is bounded, $H = H_0+V$ is self-adjoint with $\Dom(H) = \Dom(H_0)$. The wave operators 
\[
W_\pm=s-\lim_{t\to\pm\infty}e^{itH}e^{-itH_0}
\]
exist and are asymptotically complete if $\rho > 1$ \cite[Theorem 12.1]{pearson88}. We will use the stationary scattering theory, which coincides with the time-dependent approach \cite[Section 5.3]{yafaev92} given our assumptions.
For suitable $f,g \in \H$ we can write \cite[Equation 0.6.9]{yafaev10}
\begin{align}\label{eq:statwaveop}
\langle W_\pm f, g \rangle &= \int_\R{\left(\lim_{\eps \to 0}{\frac{\eps}{\pi}\langle R_0(\lambda \pm i \eps) f, R(\lambda \pm i \eps) g \rangle } \right)\, \d \lambda}.
\end{align}

%\subsection{Diagonalisations and wave operators}
%\label{subsec:diag}

For our analysis of the wave operator, we must describe the explicit unitaries which diagonalise our Hamiltonians. 

For $\lambda > 0$ the trace operator $\gamma(\lambda) : \mathcal{S}(\R^n) \to \mathcal{P}$ defined by
$[\gamma(\lambda) f](\omega):= f(\lambda \omega)$ defines a bounded operator and for each $s > \frac12$ and $t \in \R$ extends to a bounded operator on $H^{s,t}$ (see \cite[Theorem 2.4.3]{kuroda78}).

\begin{defn}
\label{def:diag}
For  $\lambda \in \R^+$, $s \in \R$ and $t > \frac12$ we define the operator 
\[
\Gamma_0(\lambda): H^{s,t} \to \mathcal{P}\quad \mbox{by}\quad
[\Gamma_0(\lambda) f](\omega) = 2^{-\frac12} \lambda^{\frac{n-2}{4}} [\F f](\lambda^\frac12 \omega)
\]
and the operator which diagonalises the free Hamiltonian $H_0$ as
\[
F_0: \H \to \H_{spec}\quad \mbox{by} \quad [F_0 f](\lambda,\omega) = [\Gamma_0(\lambda) f](\omega).
\] 
\end{defn}

\begin{lemma}[{\cite[p. 439]{jensen81}}]
\label{lem:eff-emm}
The operator $F_0$ is unitary. Moreover for $\lambda \in[0,\infty)$, $\omega\in \Sf^{n-1}$ and $f \in \H_{spec}$ we have
\[
[F_0H_0F_0^* f](\lambda,\omega)=\lambda f(\lambda,\omega)=:[Mf](\lambda,\omega).
\] 
\end{lemma}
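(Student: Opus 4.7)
The plan is to reduce both assertions to Plancherel for $\F_n$ together with a polar-coordinate change of variables. Since $H_0 = \F_n^* M_{|\xi|^2} \F_n$, the Fourier transform already diagonalises $H_0$ on $L^2(\R^n)$, so the content of the lemma is that the substitution $\lambda = |\xi|^2$ implements the concrete isomorphism $L^2(\R^n) \simeq L^2(\R^+) \ox L^2(\Sf^{n-1})$ under which $|\xi|^2$ becomes multiplication by $\lambda$. The particular normalisation $2^{-1/2}\lambda^{(n-2)/4}$ baked into $\Gamma_0(\lambda)$ is precisely the square root of the Jacobian $r^{n-1}\,dr = \tfrac12 \lambda^{(n-2)/2}\,d\lambda$ produced by that substitution.

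For the isometry I would first restrict to $f \in \mathcal{S}(\R^n)$, where the trace $\gamma(\lambda)\F f$ is defined pointwise in $\lambda$ and depends smoothly on it. Writing $\|f\|_\H^2 = \|\F_n f\|_\H^2$ in polar coordinates and substituting $\lambda = r^2$ gives
$$\|f\|_\H^2 = \int_0^\infty \int_{\Sf^{n-1}} \bigl|2^{-1/2}\lambda^{(n-2)/4}[\F_n f](\lambda^{1/2}\omega)\bigr|^2\, d\omega\, d\lambda = \int_0^\infty \|\Gamma_0(\lambda) f\|_{\mathcal{P}}^2\, d\lambda = \|F_0 f\|_{\H_{spec}}^2,$$
so $F_0$ is isometric on $\mathcal{S}(\R^n)$ and hence, by density, on all of $\H$. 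Surjectivity is obtained by running the change of variables in reverse: given $g \in \H_{spec}$, the function $\xi \mapsto \sqrt{2}\,|\xi|^{-(n-2)/2} g(|\xi|^2, \xi/|\xi|)$ lies in $L^2(\R^n)$ with the same norm as $g$, and its inverse Fourier transform is a preimage of $g$ under $F_0$.

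The intertwining assertion is then immediate. For Schwartz $g$,
$$[F_0 H_0 g](\lambda,\omega) = 2^{-1/2}\lambda^{(n-2)/4}[\F_n(-\Delta g)](\lambda^{1/2}\omega) = 2^{-1/2}\lambda^{(n-2)/4}\cdot\lambda\cdot[\F_n g](\lambda^{1/2}\omega) = \lambda[F_0 g](\lambda,\omega),$$
which extends from $\mathcal{S}(\R^n) \subset \Dom(H_0)$ to give $F_0 H_0 F_0^* = M$ as self-adjoint operators. There is no real obstacle in the argument; the only point requiring care is that $\Gamma_0(\lambda)$ is not defined pointwise on all of $\H$, so the fibrewise integral identity must be established on the Schwartz class (where the $H^{s,t}$ mapping properties of $\gamma(\lambda)$ recalled before Definition~\ref{def:diag} make the trace well-defined) before extending by continuity.
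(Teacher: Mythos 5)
Your argument is correct and is essentially the standard one: the paper offers no proof of its own, deferring to Jensen \cite[p.~439]{jensen81}, and what you have written (Plancherel, polar coordinates with the substitution $\lambda=|\xi|^2$ whose Jacobian accounts for the factor $2^{-1/2}\lambda^{(n-2)/4}$, surjectivity by inverting the change of variables, and the intertwining on the Schwartz class as a core) is exactly the computation being cited. No gaps.
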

Here we have defined the operator $M$ of multiplication by the spectral variable.

\subsection{Resolvent expansions and resonances}\label{sec:resolvents}

Here we recall some known results regarding expansions related to the perturbed resolvent $R(z)$ in the limit $z \to 0$. Only the terms in the expansion relevant to later computations will be shown, however we note that higher terms can be computed explicitly \cite{jensen79, jensen80, jensen84}. The low energy behaviour is sensitive to the presence of `zero-energy resonances'. These are essentially distributional solutions to $H\psi = 0$ which are not square-integrable but lie in some larger space. We will give the precise definition shortly.

The operator of interest to us is not $R(z)$ but the related operator $T(z) = V(\textup{Id} +R_0(z)V)^{-1}$ from \eqref{eq:tee}. Before giving the low-energy expansions of $T(z)$, we note that the high-energy behaviour can be described rather simply in the following generalisation of \cite[Lemma 9.1]{jensen79}.

\begin{lemma}
\label{lem:resolvents}
Suppose that $\rho > \frac{n+1}{2}$ in Assumption \eqref{ass11} and define the complex domain $C_+ = \{z \in \C: \textup{Re}(z) \geq 1 \textup{ and } \textup{Im}(z) > 0 \}$ and $t \in (\frac12,\rho-\frac12)$. Then $VR_0(z)$ and $(\textup{Id} +VR_0(z))^{-1}$ can be extended to continuous and uniformly bounded functions from $C_+$ to $\mathcal{B}(H^{0,t},H^{0,t})$. Similarly, $R_0(z)V$ and $(\textup{Id} +R_0(z)V)^{-1}$ can be extended to continuous and uniformly bounded functions from $C_+$ to $\mathcal{B}(H^{0,-t},H^{0,-t})$.
\end{lemma}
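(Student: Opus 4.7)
The plan is to establish first the continuity and uniform boundedness of $VR_0(z)$ on $H^{0,t}$, and then deduce the corresponding statement for $(\textup{Id}+VR_0(z))^{-1}$, treating the bounded and high-energy parts of $C_+$ separately. The dual statements for $R_0(z)V$ and $(\textup{Id}+R_0(z)V)^{-1}$ acting on $H^{0,-t}$ will then follow by taking adjoints, using $R_0(\bar z)^{*}=R_0(z)$ and the duality $(H^{0,t})^{*}\cong H^{0,-t}$.

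For the first step, I factor $VR_0(z)$ through an intermediate weighted space as
\[
H^{0,t}\hookrightarrow H^{0,s}\xrightarrow{R_0(z)}H^{0,-s}\xrightarrow{V\,\cdot}H^{0,t},
\]
choosing $s\in\bigl(\tfrac12,\min(t,\rho-t)\bigr)$, an interval which is nonempty precisely because $t\in(\tfrac12,\rho-\tfrac12)$. The first inclusion is bounded since $s\leq t$; the resolvent map is bounded by the limiting absorption principle of \cite{kuroda78}, which also supplies continuity up to the real boundary of $C_+$; and multiplication by $V$ sends $H^{0,-s}$ into $H^{0,t}$ because $|V(x)|\langle x\rangle^{t+s}\leq C\langle x\rangle^{t+s-\rho}$ is bounded by the choice $s\leq\rho-t$. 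Continuity on the compact set $\{z\in\overline{C_+}:|z|\leq R\}$ yields a uniform bound there, while for $|z|>R$ the classical high-energy estimate $\|R_0(z)\|_{H^{0,s}\to H^{0,-s}}=O(|z|^{-1/2})$ (valid uniformly up to the real axis) combined with the multiplication estimate above shows $\|VR_0(z)\|_{H^{0,t}\to H^{0,t}}\to0$ at infinity.

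For the inverse, the factorisation realises $VR_0(z)$ as a composition whose final step is multiplication by a function vanishing at infinity between weighted spaces with strict weight gain ($s<\rho-t$), so $VR_0(z)$ is compact on $H^{0,t}$. By the analytic Fredholm theorem applied on the connected open set $\{z\in C_+:\textup{Im}(z)>0\}$, $(\textup{Id}+VR_0(z))^{-1}$ exists there provided $-1$ is not an eigenvalue of $VR_0(z)$. If $VR_0(z_0)f=-f$, then $g:=R_0(z_0)f$ satisfies $(H-z_0)g=0$: for $\textup{Im}(z_0)>0$ this is impossible by self-adjointness of $H$, while for $z_0\in[1,\infty)$ it is ruled out by absence of positive eigenvalues of $H$ under Assumption~\eqref{ass11} (which holds since $\rho>1$). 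Continuity of the inverse on $C_+$ follows from continuity of $VR_0(z)$ together with norm-continuity of inversion on the open set of invertibles, and uniform boundedness combines the bound from compactness of the low-energy region of $\overline{C_+}$ with the Neumann series $(\textup{Id}+VR_0(z))^{-1}=\sum_{k\geq0}(-VR_0(z))^{k}$ valid for $|z|$ large.

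The main obstacle is the high-energy bound $\|R_0(z)\|_{H^{0,s}\to H^{0,-s}}=O(|z|^{-1/2})$ together with its uniform behaviour up to the real axis; this is where the dimension $n$ enters and drives the sharp constraint $\rho>(n+1)/2$, via the explicit Bessel/Hankel form of the $\R^n$ resolvent kernel and Agmon-type estimates controlling its decay.
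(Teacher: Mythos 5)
Your overall route is the same as the paper's: the paper proves boundedness and continuity of $VR_0(z)$ by factoring the resolvent through weighted spaces (citing the proof of Lemma 3.1 of Jensen's $m\geq 5$ paper for the mapping property of $R_0(z)$), obtains invertibility of $\textup{Id}+VR_0(z)$ from compactness together with the absence of the eigenvalue $-1$ (citing Kuroda, Proposition 5.2.1, which is precisely your boundary-eigenvalue argument), gets the uniform bound from Murata's high-energy estimate $VR_0(z)\to 0$, and dispatches the second claim by duality. You have essentially unpacked those citations, which is a legitimate way to proceed.

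One step fails as written: you claim $VR_0(z)$ is compact because its final factor is ``multiplication by a function vanishing at infinity between weighted spaces with strict weight gain.'' Multiplication by a decaying bounded function is \emph{not} compact between spaces of the form $H^{0,\cdot}$: with $\chi$ a fixed bump and $f_k(x)=e^{ik\cdot x}\chi(x)$, the sequence $(f_k)$ is bounded in $H^{0,-s}$ and converges weakly to $0$, yet $\norm{Vf_k}_{H^{0,t}}$ is a nonzero constant, so decay of $V$ alone buys no compactness. You need the smoothing of the resolvent: the limiting absorption principle gives $R_0(z)\in\mathcal{B}(H^{0,s},H^{1,-s})$ on $C_+$, multiplication by $V$ then lands in $H^{1,t+\epsilon}$ for some $\epsilon>0$ because $s<\rho-t$ is strict, and the embedding $H^{1,t+\epsilon}\hookrightarrow H^{0,t}$ is compact (this is the compact-embedding fact used in Lemma \ref{lem:limits}). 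With that repair your analytic Fredholm and Neumann-series arguments go through. A smaller point: your closing remark that the high-energy estimate is where $\rho>\frac{n+1}{2}$ enters is off target --- the $O(|z|^{-1/2})$ decay of $R_0(z)$ between $H^{0,\pm s}$ is dimension-independent, and the argument you (and the paper) give really only uses that the interval $(\frac12,\rho-\frac12)$ is nonempty; the stronger hypothesis on $\rho$ is inherited from the cited resolvent results rather than forced by this lemma.
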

\begin{proof}
Since for any  $\rho> \frac{n+1}{2}$ and  $t \in (\frac12, \rho-\frac12)$ we have $R_0(z) \in \mathcal{B}(H^{0,t},H^{0,t-\rho})$  is continuous in $z \in C_+$ by \cite[proof of Lemma 3.1]{jensen80}, and since $V \in \mathcal{B}(H^{0,t-\rho},H^{0,t})$ by assumption, we find that $VR_0(z) \in \mathcal{B}(H^{0,t},H^{0,t})$ for any $t \in (\frac12, \rho-\frac12)$.

Since $VR_0(z)$ is compact and has no eigenvalue $-1$ for $z \in C_+$ (see \cite[Proposition 5.2.1]{kuroda78}), the operator $(\textup{Id} +VR_0(z))^{-1} \in \mathcal{B}(H^{0,t},H^{0,t})$ exists and is continuous in $z \in C_+$. By \cite[Theorem 1]{murata84} we have that $VR_0(z) \to 0$ as $|z| \to \infty$ in $C_+$ in the norm of $\mathcal{B}(H^{0,t}, H^{0,t})$ for any $t > \frac12$, so that the operator $(\textup{Id}+VR_0(z))^{-1}$ is uniformly bounded. A similar computation (or duality) can be used to prove the second claim.
\end{proof}

\subsubsection{Resonances in dimensions 1,2,3}

The formulae for wave operators in terms of dilations and scattering operator that we seek to prove have been established in dimensions $n=1,2,3$, \cite{kellendonk06, kellendonk08, kellendonk12, richard13, richard13ii, richard21}.
For this reason we do not need to describe the resolvent expansions in these cases, and refer the reader to \cite{jensen01} for relevant expansions in dimensions $n=1,2$ and \cite{jensen79} for dimension $n=3$. The important takeaways from the low dimensional expansions are the following definitions of zero energy resonances.

\begin{defn}
\begin{enumerate}
\item If $n = 1$ we say there is a resonance at zero energy if there exists a $0 \neq \psi \in L^\infty(\R)$ such that $H\psi = 0$ in the sense of distributions. 
\item If $n = 2$ we say there is an $s$-resonance at zero energy if there exists a $0 \neq \psi \in L^\infty(\R^2)$ with $\psi \notin L^q(\R^2)$ for all $q < \infty$ and such that $H\psi = 0$ in the sense of distributions. 
\item If $n = 2$ we say there is a $p$-resonance at zero energy if for some $q > 2$ there exists a $0 \neq \psi \in L^q(\R^2) \cap L^\infty(\R^2)$ such that $H\psi = 0$ in the sense of distributions.
\item If $n = 3$ we say there is a resonance at zero energy of there exists $\psi \notin L^2(\R^3)$ such that $H\psi = 0$ in the sense of distributions. 
\end{enumerate}
\end{defn}

\begin{rmk}
The notation of $s$-resonances and $p$-resonances is motivated by the usual notation for angular momentum modes, although this intuition seems only applicable to rotationally invariant potentials. Here the $s$-resonances correspond to angular momentum $\ell = 0$, and the $p$-resonances appear as moments in coordinate directions in an analogous manner to states of angular momentum $\ell = 1$ (see \cite[Theorem 6.2]{jensen01}). In dimension $n=3$ the exact form of resonances can be described explicitly, see \cite[Proposition 7.4.10]{yafaev10} and \cite[Lemma 3.22]{zworski19}. 
\end{rmk}

\subsubsection{Resonances and resolvent in dimension 4}

The expansion of the operator $T(z)$ in the case $n = 4$ is covered in \cite[Lemmas 4.1, 4.3, 4.5-4.6]{jensen84}. The expansion depends heavily on whether there exist zero energy eigenvalues and resonances, and we review some details here briefly. We can characterise the dependence of $T(z)$ on resonant and kernel behaviour by considering the coefficient operators arising from a low energy expansion of the free resolvent. Suppose $\rho > 12$ and let $m,t > 0$ and $m+t>2$ and define the operator $G_0 \in \mathcal{B}(H^{-1,t},H^{1,m})$ by
\begin{align*}
[G_0 f](x) &= (4\pi^2)^{-1} \int_{\R^4}{|x-y|^{-2} f(y)\, \d y}.
\end{align*}
For $t \in (0,\rho)$ we can thus consider the idempotent $Q \in \mathcal{B}(H^{1,-t}, H^{1,-t})$ onto the kernel of $\textup{Id}+G_0V$ in $H^{1,-t}$. For $t \in (0, \rho -2)$ we define the idempotent $Q_1 = (\textup{Id}-P_0 V G_1^0V)Q$ (that $Q_1$ is an idempotent follows from \cite[Lemma 3.6]{jensen84}), where for $m,t \geq 2$ the operator $G_1^0 \in \mathcal{B}(H^{-1,t},H^{-1,m})$ is defined by
\begin{align*}
[G_1^0 f](x) &= (4\pi^2)^{-1}\int_{\R^4}{\left(1-2\gamma+i\pi-2\ln{\frac{|x-y|}{2}} \right) g(y)\, \d y}.
\end{align*}
Here $\gamma$ denotes Euler's constant. By \cite[Lemma 3.3]{jensen84} the space $Q_1 H^{1,-t}$ is at most one-dimensional and thus we make the following definition.

\begin{defn}\label{defn:res4d}
Let $n = 4$. We say that there exists a zero-energy resonance if $Q_1 \neq 0$ so that $\dim{\rm Image}(Q_1)=1$, and in this case we fix a resonance function $\psi\in {\rm Image}(Q_1)$ by the normalisation
\begin{align*}
\norm{V\psi}_1 &= 4\pi.
\end{align*}
\end{defn}
We note that by \cite[Lemma 3.3]{jensen84} we have $V\psi \in L^1(\R^4)$.
For $z$ sufficiently small we have $a-\ln(z)$ is invertible, where
\begin{align}\label{eq:constantdefn}
a &:= 1-2\gamma+i\pi - (4\pi)^{-2} \int_{\R^4}{\int_{\R^4}{\ln{\left(\frac{|x-y|}{2}\right)}[V\psi](x) \overline{[V\psi ](y)}\, \d x}\, \d y}.
\end{align}
Note that by \cite[Lemma 3.10]{jensen84} the integral is well-defined.

Recalling that $P_0$ is the projection onto the kernel of $H$, we now have all the necessary ingredients to state our resolvent expansion in dimension $n=4$.

\begin{lemma}[{\cite[Lemmas 4.1, 4.3, 4.5, 4.6]{jensen84}}]\label{lem:res4d}
Let $n = 4$ and suppose that $\rho > 12$ in Assumption \eqref{ass11}. Let $t \in (6,\rho-6)$ and $\psi$ a normalised resonance function of Definition \ref{defn:res4d}. Then there exists $C \in \mathcal{B}(H^{1,-t}, H^{1,-t})$ such that we have the expansion
\begin{align*}
T(z) &= z^{-1} VP_0 V +z^{-1} (a-\ln{(z)})^{-1} \langle V\psi,\cdot \rangle V\psi - \ln{(z)} C+O(1)
\end{align*}
in $\mathcal{B}(H^{1,-t}, H^{1,-t})$ as $z \to 0$.
\end{lemma}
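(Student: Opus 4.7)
The plan follows Jensen's Lemmas 4.1, 4.3, 4.5 and 4.6 in \cite{jensen84}; I sketch the underlying strategy. The starting point is a low-energy asymptotic expansion
\begin{align*}
R_0(z) &= G_0 + z\,G_1(z) + O(z^2)
\end{align*}
in a suitable $\mathcal{B}(H^{-1,t},H^{1,-m})$, where $G_1(z)$ is an affine function of $\ln(z)$ whose constant part is $G_1^0$. The appearance of $\ln(z)$ at first subleading order is characteristic of even dimensions and is ultimately responsible for the $(a-\ln(z))^{-1}$ factor in the statement.

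Next, I factor
\begin{align*}
\textup{Id}+R_0(z)V &= (\textup{Id}+G_0V)+zG_1(z)V+O(z^2)
\end{align*}
and apply a Grushin (Feshbach--Schur) reduction against the idempotent $Q$ onto $\Ker(\textup{Id}+G_0V)$ in $H^{1,-t}$. On $(\textup{Id}-Q)H^{1,-t}$ the leading term $\textup{Id}+G_0V$ is invertible, so the Schur complement reduces the problem to inverting a small, $z$-dependent operator on $QH^{1,-t}$. All singular behaviour of $T(z)=V(\textup{Id}+R_0(z)V)^{-1}$ is then generated by this reduced inverse together with the off-diagonal coupling.

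The decisive step is to decompose $QH^{1,-t}$ further using the algebraic identities $P_0=P_0Q=QP_0$ and $Q_1Q=QQ_1=Q_1$. The eigenvalue subspace produces the leading singularity $z^{-1}VP_0V$ exactly as in the standard threshold expansion. The complementary resonance subspace ${\rm Image}(Q_1)$, which is one-dimensional by Lemma 3.3 of \cite{jensen84}, yields the second singular term: because $G_1^0$ carries the constant $1-2\gamma+i\pi$, the scalar obstruction to inverting the reduced operator on ${\rm Image}(Q_1)$ is precisely $a-\ln(z)$, giving the rank-one contribution $z^{-1}(a-\ln(z))^{-1}\langle\cdot,V\psi\rangle V\psi$ with overall constant fixed by the normalisation $\norm{V\psi}_1=4\pi$. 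The $-\ln(z)C$ term then arises from the cross-terms in the Schur complement between the singular $Q$-sector and the regular $(\textup{Id}-Q)$-sector, and the remaining pieces collect into the $O(1)$ remainder.

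The main obstacle I expect is the careful bookkeeping of the weighted Sobolev spaces together with the commutation relations among $P_0$, $Q$, and $Q_1$, so that the eigenvalue and resonance contributions decouple cleanly in the Schur complement. The hypothesis $t\in(6,\rho-6)$ is exactly what ensures that every intermediate operator is bounded and continuous at $z=0$ in the required pairs of spaces, absorbing the weight cost of each application of $G_0$ and $G_1^0$.
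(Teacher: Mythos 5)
The paper does not prove this lemma at all: as the bracketed attribution in the lemma header indicates, it is imported verbatim from Lemmas 4.1, 4.3, 4.5 and 4.6 of \cite{jensen84}, so there is no in-paper argument to compare against. Your sketch is a faithful outline of Jensen's actual proof --- the logarithmic free-resolvent expansion, the Grushin reduction against $Q$, the further splitting via $P_0$ and $Q_1$, and the origin of the $(a-\ln(z))^{-1}$ scalar --- and is consistent with the objects ($G_0$, $G_1^0$, $Q$, $Q_1$, the normalisation $\norm{V\psi}_1=4\pi$) that the paper sets up in Section \ref{sec:resolvents} precisely in order to state this result.
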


We note that in the case $P_0 = 0$ and $\psi = 0$, the decay assumptions on the potential $V$ can be relaxed significantly \cite[Lemma 4.1]{jensen84}.

\subsubsection{Resolvent expansions in dimensions 5 and higher}

In dimensions $n \geq 5$ there can be no resonant behaviour with the only poles of $T(z)$ at $z = 0$ arising from the non-triviality of the kernel of $H$ \cite{jensen80}. The integral kernel of the free resolvent $R_0(z)$ can be constructed explicitly in terms of Hankel functions \cite[Equation 3.1]{jensen80}. Resolvent expansions in terms of these Hankel functions are typically split into odd and even cases due to the logarithmic behaviour of Hankel functions in the even case, however for our purposes we only need the lowest order term which agrees in both the even and odd case.

In order to discuss the resolvent expansions in general, we record the decay assumptions on the potential required in each dimension.

\begin{ass}
\label{ass:best-ass}
In dimension $n$ we fix $\rho$ and $t$ such that
\begin{enumerate}
	\item if $n=2$ then $\rho > 11$;
	\item if $n=3$ then $\rho > 5$ and $t \in (\frac{5}{2}, \rho - \frac{5}{2})$ ;
	\item if $n=4$ then $\rho > 12$ and $t \in (6, \rho-6)$ ; and
	\item if $n \geq 5$ then $\rho > \frac{3n+4}{2}$ and $t \in \left(\frac{n}{2}, \rho-\frac{n}{2}\right)$.
\end{enumerate}
We assume that $|V(x)| \leq C(1+|x|)^{-\rho}$ for almost all $x \in \R^n$.
\end{ass}

We note that the requirements of Assumption \ref{ass:best-ass} are needed for the statement of Theorem \ref{thm:main} and can be relaxed to varying degrees in the intermediate results of Section \ref{sec:wave-op}. For simplicity of the statements we will consistently use Assumption \ref{ass:best-ass}.  The result of the expansion of $T(z)$ in dimension $n \geq 5$ is the following.

\begin{lemma}[{\cite[Lemmas 5.1, 5.3, 5.5 and 5.7]{jensen80}}]\label{lem:resexpbig}
Suppose that $n \geq 5$ and that $\rho, t$ and $V$ satisfy Assumption \ref{ass:best-ass}. 
%are such that 
%\begin{enumerate}
%	\item if $n = 5$ then $\rho > 7$ and $s \in \left(6-\frac{5}{2}, \rho - \left(6-\frac{5}{2}\right)\right)$;
%	\item if $n = 6$ then $\rho > 6$ and $s \in \left(3, \rho - 3\right)$;
%	\item if $n = 7, 8, 9, 10$ then $\rho > 6$ and $s \in \left(6-\frac{n}{2}, \rho - \left(6-\frac{n}{2}\right)\right)$;
%	\item if $n \geq 11$ is odd then $\rho > \frac{n+1}{2}$ and $s \in \left(6-\frac{n}{2}, \rho - \left(6-\frac{n}{2}\right)\right)$; and
%	\item if $n \geq 12$ is even then $\rho > \frac{n+1}{2}$ and $s \in \left(\frac12, \rho-\frac12\right)$.
%\end{enumerate}
Then we have the expansion
\begin{align*}
T(z) &= z^{-1} VP_0 V +o(1)
\end{align*}
in $\mathcal{B}(H^{1,-t}, H^{1,-t})$ as $z \to 0$.
\end{lemma}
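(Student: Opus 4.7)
The plan is to follow the strategy of \cite{jensen80}: expand the free resolvent $R_0(z)$ near $z=0$, invert $\textup{Id}+R_0(z)V$ by a Grushin--Feshbach reduction relative to the kernel of $\textup{Id}+G_0V$, and read off the singular contribution from the resulting finite-dimensional block. First I would record the low-energy expansion of the free resolvent: in $n\geq 5$ the integral kernel of $R_0(z)$ is a Hankel function which admits an expansion
\[
R_0(z)=G_0+zG_1+o(z)
\]
in $\mathcal{B}(H^{-1,t},H^{1,-t})$, with $G_0$ convolution by a constant multiple of $|x-y|^{2-n}$. The hypothesis $\rho>(3n+4)/2$ of Assumption \ref{ass:best-ass} ensures that these coefficients compose boundedly with $V$ on weighted Sobolev spaces in the range $t\in(n/2,\rho-n/2)$, and a separate (but parallel) accounting is needed in even dimensions, where the Hankel expansion contains logarithmic corrections.

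Next I would perform the Grushin reduction for $\textup{Id}+R_0(z)V=(\textup{Id}+G_0V)+zG_1V+o(z)$. Since $G_0V$ is compact on $H^{1,-t}$, the operator $\textup{Id}+G_0V$ is Fredholm of index zero; let $Q$ denote the projection onto its kernel. The key structural input for $n\geq 5$ is the absence of zero-energy resonances: any $\psi\in H^{1,-t}$ satisfying $\psi=-G_0V\psi$ automatically lies in $L^2(\R^n)$, because $V\psi$ inherits the decay of $V$ and $G_0$ further improves decay by the factor $|x|^{2-n}$, which is square-integrable at infinity precisely when $n\geq 5$. Hence $\psi\in\ker H$ and $Q=P_0$. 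Inverting the full pencil by the Schur complement then yields a finite-dimensional block whose leading behaviour is $-z^{-1}P_0$, the $z^{-1}$ arising from the pairing of $G_1V$ (or the next non-vanishing Hankel coefficient in the even case) with $P_0 H^{1,-t}$.

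Multiplying by $V$ on the left would then give $T(z)=V(\textup{Id}+R_0(z)V)^{-1}=z^{-1}VP_0V+o(1)$ as claimed. The main obstacle is not the structural content but the bookkeeping: one must verify that all remainders, including the compositions between the complementary block and the finite-rank contribution, lie in $\mathcal{B}(H^{1,-t},H^{1,-t})$ and vanish as $z\to 0$, and that the logarithmic terms appearing in even dimensions $n\geq 6$ do not spoil the $o(1)$ rate. This is where the strong decay $\rho>(3n+4)/2$ and the prescribed weight range fully enter, and the corresponding estimates are carried out dimension by dimension in \cite[Lemmas 5.1, 5.3, 5.5, 5.7]{jensen80}, with the leading singular term insensitive to the parity of $n$.
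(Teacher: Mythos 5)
The paper offers no proof of this lemma at all: it is imported verbatim from \cite[Lemmas 5.1, 5.3, 5.5, 5.7]{jensen80}, so there is nothing internal to compare against. Your outline faithfully reproduces the structure of Jensen's argument --- Hankel-function expansion of $R_0(z)$, Grushin reduction relative to $\ker(\textup{Id}+G_0V)$, and the observation that for $n\geq 5$ the decay $|x|^{2-n}$ of the $G_0$-kernel forces every such kernel element into $L^2$, eliminating resonances and leaving only the $z^{-1}VP_0V$ singularity --- so it is the same approach as the cited source.
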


Again we note that if the kernel projection $P_0 = 0$ then the decay assumptions in Lemma \ref{lem:resexpbig} can be weakened, \cite[Lemmas 5.1, 5.5]{jensen80}.
\subsection{The scattering operator and low energy expansions of the trace operator}
In this section we develop some properties of the trace operator\footnote{In the sense of restriction to a hypersurface.} at low energies which will be useful later, along with an application to the threshold behaviour of the scattering matrix. The scattering matrix and the scattering amplitude are related by the following result. To state the result, recall from Definition \ref{def:diag} and Lemma \ref{lem:eff-emm} the operator $\Gamma_0$.

\begin{thm}[{\cite[Theorem 6.6.10]{yafaev10}}]
\label{thm: stationary scattering operator}
Suppose that $V$ satisfies Assumption \eqref{ass11} for some $\rho > 1$. The scattering matrix $S(\lambda)$ is given for all $\lambda \in \R^+$ by the equation
\begin{align}\label{eq: scat matrix defn}
S(\lambda) &= \textup{Id} - 2\pi i \Gamma_0(\lambda)(V-VR(\lambda+i0) V)\Gamma_0(\lambda)^*.
\end{align}
For each $\lambda\in\R^+$, the operator $S(\lambda)$ is unitary in $\mathcal{P}=L^2(\Sf^{n-1})$ and depends continuously (in the sense of norm) on $\lambda \in \R^+$. 
\end{thm}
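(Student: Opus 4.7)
The intertwining relation $H W_\pm = W_\pm H_0$ forces $S := W_+^* W_-$ to commute with $H_0$, so under the spectral diagonalisation $F_0 H_0 F_0^* = M$ of Lemma~\ref{lem:eff-emm} the operator $F_0 S F_0^*$ on $\H_{spec} = L^2(\R^+,\mathcal{P})$ is decomposable as a direct integral $\int^\oplus S(\lambda)\,\d\lambda$ with each fibre $S(\lambda) \in \mathcal{B}(\mathcal{P})$. The aim is then to identify each fibre explicitly and show the resulting map $\lambda \mapsto S(\lambda)$ is norm continuous with unitary values.

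The plan is to compute $S-\textup{Id}$ weakly against a dense class of test vectors $f,g \in \mathcal{S}(\R^n)$ starting from the stationary formula \eqref{eq:statwaveop}. Writing $W_+^* W_- - \textup{Id} = (W_- - W_+) + (W_+ - \textup{Id})^*(W_- - \textup{Id})$ and feeding each factor through \eqref{eq:statwaveop}, I would use the resolvent identity
\[
R(z) - R_0(z) = -R_0(z)\,T(z)\,R_0(z),\qquad T(z) = V - VR(z)V,
\]
to pull the difference $W_- - W_+$ into a single $\lambda$-integral against $T(\lambda+i0)$. The Plemelj--Sokhotski identity for the spectral measure of $H_0$, in the form $\tfrac{1}{2\pi i}\bigl(R_0(\lambda+i0) - R_0(\lambda-i0)\bigr) = \Gamma_0(\lambda)^*\Gamma_0(\lambda)$, then collapses the surrounding $R_0$ factors to the energy-shell trace $\Gamma_0(\lambda)$. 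Carrying out this calculation shows that the $\lambda$-fibre of $S-\textup{Id}$ is precisely $-2\pi i \Gamma_0(\lambda) T(\lambda+i0)\Gamma_0(\lambda)^*$, which is the asserted formula \eqref{eq: scat matrix defn}.

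The analytic inputs making this rigorous are: (a) the limiting absorption principle, which places $R(\lambda+i0) \in \mathcal{B}(H^{-1,t},H^{1,-t})$ for $t>\tfrac12$ and gives norm continuity in $\lambda \in \R^+$; (b) the trace bound $\Gamma_0(\lambda) \in \mathcal{B}(H^{s,t},\mathcal{P})$ for $s>\tfrac12$ recalled after Definition~\ref{def:diag}; and (c) the mapping property of $V$ as multiplication between weighted $L^2$ spaces, which under $|V(x)| \leq C(1+|x|)^{-\rho}$ with $\rho>1$ is enough to compose the three factors continuously into $\mathcal{B}(\mathcal{P})$. The composition and continuity in $\lambda$ of (a)--(c) deliver the asserted norm continuity of $\lambda \mapsto S(\lambda)$.

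The main obstacle is bookkeeping: the boundary resolvents $R_0(\lambda\pm i0)$ do not exist in $\mathcal{B}(\H)$, only between weighted Sobolev spaces, so every manipulation has to be interpreted between the correct pair of spaces, and the passage to the limit $\eps \to 0$ in \eqref{eq:statwaveop} must be done after the product with $V$ has restored boundedness. Once the formula is established, unitarity of each $S(\lambda)$ is free: asymptotic completeness of $(H_0,H)$ at $\rho>1$ (Pearson's theorem cited after \eqref{eq:tee}) makes $W_\pm$ mutually isometric with the same range $\Ker(P_0)^\perp$, so $S = W_+^* W_-$ is unitary on $\H$; the direct-integral decomposition forces $S(\lambda)$ to be unitary for a.e.\ $\lambda$, and the norm continuity established above promotes this to every $\lambda \in \R^+$.
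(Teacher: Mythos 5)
The paper does not prove this statement at all: it is imported verbatim as \cite[Theorem 6.6.10]{yafaev10}, so there is no internal argument to compare against. Your sketch is essentially the standard stationary derivation that Yafaev gives (decompose $S$ over the spectral representation of $H_0$, express $W_\pm$ via \eqref{eq:statwaveop}, use the resolvent identity to bring in $T(\lambda+i0)$, and collapse the boundary values of $R_0$ onto the energy shell via $\tfrac{1}{2\pi i}\bigl(R_0(\lambda+i0)-R_0(\lambda-i0)\bigr)=\Gamma_0(\lambda)^*\Gamma_0(\lambda)$), and the analytic inputs you list (limiting absorption principle, trace estimates for $\Gamma_0$, the weighted mapping property of $V$) are the right ones. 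The outline is therefore sound.

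Two concrete errors should be fixed. First, your opening algebraic identity is false: $(W_--W_+)+(W_+-\textup{Id})^*(W_--\textup{Id})$ expands to $W_+^*W_- -W_+-W_+^*+\textup{Id}$, not $W_+^*W_- -\textup{Id}$. The identity you want is $S-\textup{Id}=W_+^*(W_--W_+)=(W_--W_+)+(W_+-\textup{Id})^*(W_--W_+)$, which uses the isometry relation $W_+^*W_+=\textup{Id}$; since the entire computation of the fibre is driven by this decomposition, the error as written would propagate. Second, asymptotic completeness gives $\ran W_+=\ran W_-=\H_{ac}(H)$, the orthogonal complement of the span of \emph{all} eigenvectors of $H$ (the range of $P_p(H)$ in the paper's later notation), not $\Ker(P_0)^\perp$; with $P_0$ the zero-energy kernel projection, $\Ker(P_0)^\perp$ is the zero eigenspace itself. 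Neither slip changes the strategy, but both need correcting before the argument is a proof.
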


We note that  resolvent identities yield $T(z) = V(\textup{Id}+R_0(z)V)^{-1} = V-VR(z)V$. This identity and the stationary formula 
for the scattering matrix of Equation \eqref{eq: scat matrix defn}
demonstrate why we need to study the asymptotics of $T$.
The resolvent expansions of the previous section and Equation \eqref{eq: scat matrix defn} allow us to analyse the value of the scattering matrix at zero energy. 
To do so we also need the low energy behaviour of the operator $\Gamma_0$, which we now describe.

For $\lambda > 0$ the trace operator $\gamma(\lambda): \mathcal{S}(\R^n) \to \mathcal{P}$ is continuous and extends to a bounded operator in $\mathcal{B}(H^{s,t}, \mathcal{P})$ for each $s > \frac12$ and $t \in \R$ \cite[Theorem 2.4.3]{kuroda78}. We need the asymptotic development of $\gamma(\lambda^\frac12)\Fi$ as $\lambda \to 0$. We can compute for $f \in C_c^\infty(\R^n)$ the expansion
\begin{align*}
[\gamma(\lambda^\frac12) \Fi f](\omega) &= (2\pi)^{-\frac{n}{2}} \int_{\R^n}{\e^{-i\lambda^\frac12 \langle x,\omega \rangle} f(x)\, \d x} \\
&= (2\pi)^{-\frac{n}{2}} \int_{\R^n}{\sum_{j=0}^K{\frac{1}{j!}(i\lambda^\frac12)^j (-\langle x, \omega \rangle)^j f(x)}\, \d x}  + O\left(\lambda^{\frac{K+1}{2}}\right)\\
&=: \sum_{j=0}^K{(i\lambda^\frac12)^j [\gamma_j f](\omega)} + O\left(\lambda^{\frac{K+1}{2}}\right)
\end{align*}
as $\lambda \to 0$ in $\mathcal{B}(H^{s,t}, \mathcal{P})$ for appropriate $s,t$. The operators $\gamma_j$ can be considered as operators in certain weighted Sobolev spaces, with higher terms in the series requiring convergence in Sobolev spaces with higher decay. Jensen \cite[Equation 5.4]{jensen79} states the following result in dimension $n = 3$ and the result generalises in a straightforward manner to each dimension.
\begin{lemma}
\label{lem:gammabounded}
Fix $j \in \N$. For each $s \geq 0$ and $t > j + \frac{n}{2}$ we have $\gamma_j \in \mathcal{B}(H^{s,t}, \mathcal{P})$.
\end{lemma}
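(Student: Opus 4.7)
The plan is to reduce to the case $s=0$ via the continuous embedding $H^{s,t}\hookrightarrow H^{0,t}$ for $s\geq 0$ (standard for weighted Sobolev spaces, see \cite[Section 4.1]{amrein96}), and then prove the bound from $H^{0,t}$ to $\mathcal{P}$ directly by Cauchy--Schwarz. Explicitly, reading off from the Taylor expansion given just before the lemma, the operator $\gamma_j$ acts by
\[
[\gamma_j f](\omega) = \frac{(-1)^j}{j!}(2\pi)^{-n/2}\int_{\R^n}\langle x,\omega\rangle^j f(x)\,\d x,
\]
and for $\omega\in\Sf^{n-1}$ the elementary bound $|\langle x,\omega\rangle|\leq |x|$ yields a pointwise estimate on $|[\gamma_j f](\omega)|$ that is uniform in $\omega$.

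The next step is to split $|x|^j|f(x)| = \bigl((1+|x|^2)^{-t/2}|x|^j\bigr)\cdot\bigl((1+|x|^2)^{t/2}|f(x)|\bigr)$ and apply Cauchy--Schwarz, which gives
\[
|[\gamma_j f](\omega)| \leq \frac{(2\pi)^{-n/2}}{j!}\left(\int_{\R^n}|x|^{2j}(1+|x|^2)^{-t}\,\d x\right)^{1/2}\|f\|_{H^{0,t}}.
\]
I would then pass to polar coordinates to rewrite the weight integral as $|\Sf^{n-1}|\int_0^\infty r^{n-1+2j}(1+r^2)^{-t}\,\d r$. This integral converges at infinity precisely when $2t - 2j - n > 0$, i.e. when $t > j + n/2$, which is exactly the hypothesis of the lemma (and shows the threshold is sharp).

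Since the resulting bound on $|[\gamma_j f](\omega)|$ is independent of $\omega$, integrating over $\Sf^{n-1}$ yields $\|\gamma_j f\|_{\mathcal{P}}\leq C\|f\|_{H^{0,t}}$ with a constant $C$ depending only on $n,j,t$. Combining with the embedding $H^{s,t}\hookrightarrow H^{0,t}$ for $s\geq 0$ establishes the claim.

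There is no substantive obstacle here: the argument is a Cauchy--Schwarz estimate and a polar-coordinate computation, with the sole minor point being that one must invoke the standard fact that increasing the differentiability index $s$ (with the decay index $t$ fixed) only shrinks the space $H^{s,t}$.
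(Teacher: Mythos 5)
Your proof is correct and follows essentially the same route as the paper: reduce to $s=0$ via the inclusion $H^{s,t}\subset H^{0,t}$, split the weight as $(1+|x|^2)^{-t/2}(1+|x|^2)^{t/2}$, and apply Cauchy--Schwarz, with the condition $t>j+\tfrac{n}{2}$ arising from the convergence of the weight integral. The only cosmetic difference is that you bound $|\langle x,\omega\rangle|\leq|x|$ to get an estimate uniform in $\omega$ before integrating over the sphere, whereas the paper keeps the angular factor inside the sphere integral; this changes nothing of substance.
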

\begin{proof}
For $t >j+\frac{n}{2}$ and $s = 0$, we can estimate that 
\begin{align*}
\norm{\gamma_j f}_{\mathcal{P}}^2 &= \int_{\Sf^{n-1}}{|[\gamma_j f](\omega)|^2\, \d \omega} 
= \int_{\Sf^{n-1}}{\left| (2\pi)^{-\frac{n}{2}} \int_{\R^n}{(-\langle x, \omega \rangle)^j f(x)\, \d x} \right|^2\, \d \omega} \\
&\leq (2\pi)^{-n} \int_{\Sf^{n-1}}{\left( \int_{\R^n}{\left| \langle \omega, x \rangle \right|^j |f(x)|\, \d x} \right)^2\, \d \omega} \\
&= (2\pi)^{-n} \int_{\Sf^{n-1}}{\left( \int_{\R^n}{\left| \langle \omega, x \rangle \right|^j (1+|x|^2)^{-\frac{t}{2}} (1+|x|^2)^{\frac{t}{2}} |f(x)| \, \d x}\right)^2\, \d \omega}.
\end{align*}
We next use the Cauchy-Schwarz inequality to obtain  the estimate
\begin{align*}
&(2\pi)^{-n} \int_{\Sf^{n-1}}{\left( \int_{\R^n}{\left| \langle \omega, x \rangle \right|^j (1+|x|^2)^{-\frac{t}{2}} (1+|x|^2)^{\frac{t}{2}} |f(x)| \, \d x}\right)^2\, \d \omega} \\
&\leq (2\pi)^{-n} \int_{\Sf^{n-1}}{\left( \int_{\R^n}{|\langle \omega, x \rangle|^{2j}(1+|x|^2)^{-t}\, \d x}\right)^2\left(\int_{\R^n}{(1+|y|^2)^{t} |f(y)|^2 \, \d y}\right)^2\, \d \omega} \\
&= C_j \norm{f}_{H^{0,t}(\R^n)}^2.
\end{align*}
Thus for $t > j+\frac{n}{2}$ we find $\gamma_j \in \mathcal{B}(H^{0,t}, \mathcal{P})$. For $s > 0$ we use the inclusion $H^{s,t} \subset H^{0,t}$.
\end{proof}
For each $\lambda > 0$ the operator $\Gamma_0(\lambda): \mathcal{S}(\R^n) \to \mathcal{P}$ extends to an element of $\mathcal{B}(H^{s,t}, \mathcal{P})$ for each $s \in \R$ and $t > \frac12$, as well as $\lambda \mapsto \Gamma_0(\lambda) \in \mathcal{B}(H^{s,t}, \mathcal{P})$ is continuous. These follow immediately since $\Gamma_0(\lambda) = 2^{-\frac12} \lambda^{\frac{n-2}{4}} \gamma(\lambda^\frac12) \Fi$.
\begin{lemma}\label{lem:gammaexp}
For $s \geq 0$ and $t > \frac{n}{2}+1$ we have the expansion
\begin{align*}
\Gamma_0(\lambda) &= 2^{-\frac12} \lambda^{\frac{n-2}{4}}\gamma_0-2^{-\frac12} \lambda^{\frac{n}{4}} \gamma_1+o\left(\lambda^{\frac{n+2}{4}}\right)
\end{align*}
as $\lambda \to 0$ in $\mathcal{B}(H^{s,t},\mathcal{P})$.
\end{lemma}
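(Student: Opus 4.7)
The plan is to substitute the definition $\Gamma_0(\lambda) = 2^{-\frac12}\lambda^{(n-2)/4}\gamma(\lambda^{1/2})\mathcal{F}$ into the left-hand side, so that
\[
\Gamma_0(\lambda) - 2^{-\frac12}\lambda^{(n-2)/4}\gamma_0 = 2^{-\frac12}\lambda^{(n-2)/4}\bigl(\gamma(\lambda^{1/2})\mathcal{F} - \gamma_0\bigr).
\]
The claim then reduces to showing that $\|\gamma(\lambda^{1/2})\mathcal{F} - \gamma_0\|_{\mathcal{B}(H^{s,t},\mathcal{P})} = o(\lambda^{1/2})$ as $\lambda\to 0$, whence the prefactor $\lambda^{(n-2)/4}$ combines to produce the asserted $o(\lambda^{n/4})$. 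Since $H^{s,t}\hookrightarrow H^{0,t}$ continuously for every $s\geq 0$, one may restrict to $s=0$ throughout.

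The operator $\gamma(\lambda^{1/2})\mathcal{F} - \gamma_0: H^{0,t}\to\mathcal{P}$ acts by the integral
\[
\bigl[(\gamma(\lambda^{1/2})\mathcal{F} - \gamma_0)f\bigr](\omega) = (2\pi)^{-n/2}\int_{\R^n}(e^{-i\lambda^{1/2}\langle x,\omega\rangle}-1)f(x)\,\d x.
\]
Following the Cauchy--Schwarz strategy of Lemma \ref{lem:gammabounded}, I would split $f(x) = (1+|x|^2)^{-t/2}\cdot(1+|x|^2)^{t/2}f(x)$, apply Cauchy--Schwarz in $x$, and integrate the resulting kernel over $\omega\in\Sf^{n-1}$ against the weight $(1+|x|^2)^{-t}$, which is integrable on $\R^n$ precisely because $t>n/2$. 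To capture the $\lambda$-dependence I would use the elementary inequality $|e^{is}-1|\leq\min(2,|s|)$ together with a splitting of $\R^n$ at $|x|=\lambda^{-1/2}$: on the inner ball the linear bound $|e^{-i\lambda^{1/2}\langle x,\omega\rangle}-1|\leq\lambda^{1/2}|x|$ produces a contribution of order $\lambda^{1/2}$ against the finite weight, while on the exterior the uniform bound by $2$ combines with the tail decay $\int_{|x|>\lambda^{-1/2}}(1+|x|^2)^{-t}\,\d x\to 0$ to supply a vanishing correction. Balancing these two contributions is what delivers the sharp exponent.

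The main technical obstacle I expect is upgrading from the qualitative $o(1)$ convergence produced by a naive application of the dominated convergence theorem to the Hilbert--Schmidt norm, to the sharper $o(\lambda^{1/2})$ needed here, under only the threshold hypothesis $t>n/2$ (so that $\gamma_1$ from the higher-order expansion of the previous subsection need not even be a bounded operator on $H^{s,t}$ by Lemma \ref{lem:gammabounded}). The strictness of $t>n/2$ and the careful choice of the splitting parameter $|x|=\lambda^{-1/2}$ are what should supply this refinement. Once this estimate on $\gamma(\lambda^{1/2})\mathcal{F}-\gamma_0$ is in hand, the conclusion of the lemma follows immediately from the displayed factorisation.
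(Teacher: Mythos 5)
Your reduction --- factor out $2^{-\frac12}\lambda^{\frac{n-2}{4}}$ and estimate $\gamma(\lambda^{\frac12})\F-\gamma_0$ directly via Cauchy--Schwarz against the weight $(1+|x|^2)^{-t}$ --- is exactly the natural fleshing-out of what the paper leaves implicit (the paper gives no proof; the lemma is meant to follow from the Taylor expansion of $\gamma(\lambda^{\frac12})\F$ displayed before Lemma \ref{lem:gammabounded}, truncated at $j=0$). The problem is the quantitative step you yourself flag as the ``main technical obstacle'': it does not close, and cannot. Carrying out your splitting at $|x|=\lambda^{-\frac12}$, the Hilbert--Schmidt-type bound for $\|\gamma(\lambda^{\frac12})\F-\gamma_0\|_{\mathcal{B}(H^{0,t},\mathcal{P})}^2$ has inner contribution $\lambda\int_{|x|\le\lambda^{-1/2}}|x|^2(1+|x|^2)^{-t}\,\d x$ and outer contribution $4\int_{|x|\ge\lambda^{-1/2}}(1+|x|^2)^{-t}\,\d x$; for $\tfrac n2<t<\tfrac n2+1$ both are of order $\lambda^{t-\frac n2}$, so the method yields only $O(\lambda^{(t-\frac n2)/2})$, which is $o(\lambda^{\frac12})$ only when $t>\tfrac n2+1$. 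The hoped-for refinement from ``the strictness of $t>\tfrac n2$'' does not materialise, and no refinement can: testing against $f_\lambda(x)=(1+|x|^2)^{-t/2}c_\lambda\phi(x/\lambda^{-1/2})$ with $\phi$ a bump on an annulus and $c_\lambda$ an $L^2$-normalisation shows $\|(\gamma(\lambda^{\frac12})\F-\gamma_0)f_\lambda\|_{\mathcal{P}}\gtrsim\lambda^{(t-\frac n2)/2}$, so the operator norm genuinely is of that order. Even for large $t$ the difference equals $i\lambda^{\frac12}\gamma_1+O(\lambda)$ with $\gamma_1\neq0$, hence is $\Theta(\lambda^{\frac12})$ and never $o(\lambda^{\frac12})$.

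In other words, the obstruction is not in your argument but in the statement: as written, the remainder should be $o\bigl(\lambda^{\frac{n-2}{4}}\bigr)$ (equivalently, $\gamma(\lambda^{\frac12})\F\to\gamma_0$ in norm) for $t>\tfrac n2$, or $O\bigl(\lambda^{\frac n4}\bigr)$ if one additionally assumes $t>1+\tfrac n2$ so that $\gamma_1$ is bounded (cf.\ Lemma \ref{lem:gammabounded}); the combination ``$o(\lambda^{\frac n4})$ for all $t>\tfrac n2$'' is unattainable. Your kernel estimate does prove the $o\bigl(\lambda^{\frac{n-2}{4}}\bigr)$ version, and that weaker form is all that is used downstream (Lemmas \ref{lem:bounded} and \ref{lem:limits} need only norm convergence of $\gamma(\lambda^{\frac12})\F$, and Theorem \ref{thm:scatmatzero} and Lemma \ref{lem:mult} invoke the two-term expansion under the stronger decay of Assumption \ref{ass:best-ass}). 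So you should either weaken the exponent you are proving or strengthen the hypothesis on $t$; as a proof of the literal statement the proposal has an unfixable gap at the balancing step.
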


Combining Lemma \ref{lem:gammaexp} with the results of the previous subsection we can determine the low energy behaviour of the scattering matrix. We first mention known results in dimensions $n=1,2,3$.

\subsubsection{The scattering matrix at zero energy in low dimensions}

The low energy behaviour of the scattering matrix has been determined using resolvent expansion in dimension $n = 1$ in  \cite{bolle85} (see also \cite[Theorem 2.15]{zworski19}, \cite{melgaard01} and \cite[Proposition 9]{kellendonk08}).
%, however the statement we present here can be found as \cite[Proposition 9]{kellendonk08}.
%
%\begin{thm}
%Let $n = 1$ and suppose $\rho > \frac{5}{2}$ in Assumption \ref{ass11}. Then the scattering matrix satisfies
%\begin{align*}
%S(0) &= \begin{cases} \begin{pmatrix} 0 & -1 \\ -1 & 0 \end{pmatrix}, \quad & \textup{ if there does not exist a resonance,} \\
%\begin{pmatrix} 2ab & a^2-b^2 \\ b^2-a^2 & 2ab \end{pmatrix}, \quad & \textup{ if there exists a resonance,}
%\end{cases}
%\end{align*}
%where $a, b \in \R$ and $a^2+b^2 = 1$.
%\end{thm}
%
%It should be mentioned that we have used a different basis to \cite[Proposition 9]{kellendonk08}, 
The essential feature to distinguish resonant behaviour in dimension 1 is that $\textup{det}S(0) = -1$ when there are no resonances and $\textup{det}S(0)= 1$ when there are resonances.
We note that the dimension 1 result is atypical in the sense that for all dimensions $n \geq 2$, if there does not exist a resonance at zero then $S(0) = \textup{Id}$. The generic failure of $S(0)=\textup{Id}$ means that our index pairing result does not apply to dimension 1.

The low energy behaviour of the scattering matrix in dimension $n = 2$ can be found in \cite[Theorem 1.1]{richard21}. We note that in contrast to dimension $n = 1$ the behaviour is independent of the presence of resonances.

\begin{thm}
Let $n = 2$ and suppose $\rho > 11$ in Assumption \eqref{ass11}. Then the scattering matrix satisfies $S(0) = \textup{Id}$.
\end{thm}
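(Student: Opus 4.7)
My plan is to combine the stationary formula of Theorem \ref{thm: stationary scattering operator},
\[
S(\lambda) - \textup{Id} = -2\pi i\, \Gamma_0(\lambda)\, T(\lambda+i0)\, \Gamma_0(\lambda)^*,
\]
with the norm continuity of $\lambda\mapsto S(\lambda)$ on $\R^+$ (same theorem), and show that the right-hand side tends to $0$ in operator norm on $\mathcal{P}$ as $\lambda \downarrow 0$; this forces $S(0) = \textup{Id}$ by taking limits.

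For the outer factor, I would specialise Lemma \ref{lem:gammaexp} to $n = 2$. The prefactor $\lambda^{(n-2)/4}$ equals $1$ in this dimension, so the lemma collapses to $\Gamma_0(\lambda) \to 2^{-1/2}\gamma_0$ in $\mathcal{B}(H^{s,t},\mathcal{P})$ for $s \geq 0$ and $t > 1$, where $[\gamma_0 f](\omega) = (2\pi)^{-1}\!\int_{\R^2} f(x)\, dx$ is the $\omega$-independent rank-one operator. The problem thus reduces to showing that the sandwich $\gamma_0\, T(\lambda+i0)\,\gamma_0^*$ goes to $0$ in norm on $\mathcal{P}$.

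For the inner factor I would invoke the Jensen--Nenciu resolvent expansion for $T(z)$ in $\R^2$ \cite{jensen01}; the decay hypothesis $\rho > 11$ is exactly what is required to push that expansion through uniformly in the $p$-resonance case of \cite{richard21}. Because the two-dimensional free resolvent $R_0(z)$ has a logarithmic singularity at $z = 0$, the Neumann inversion produces, in the generic case, $T(\lambda + i 0)$ of size $(\ln \lambda)^{-1}$, which tends to zero. In the non-generic cases the expansion acquires additional singular pieces: a pole $z^{-1} V P_0 V$ from zero-energy eigenfunctions, and finite-rank coefficients of further negative powers of $\ln z$ associated to the $s$- and $p$-resonance subspaces. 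The pole from $\ker H$ is eliminated by the identity $\gamma_0 V\psi = 0$ for every $\psi \in \ker H$, which follows by integrating $V\psi = \Delta \psi$ over $\R^2$ using the $L^2$ decay of $\psi$; the resonance terms all carry scalar prefactors built from negative powers of $\ln \lambda$ and hence vanish as $\lambda \downarrow 0$ once sandwiched with the bounded operator $\gamma_0$.

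The main obstacle is the case-by-case structure of the Jensen--Nenciu expansion in two dimensions — generic, kernel-only, $s$-resonance, $p$-resonance, and combinations each have slightly different coefficient algebra — together with tracking the precise weighted Sobolev spaces in which each coefficient lives, so that $\gamma_0 \in \mathcal{B}(H^{s,t},\mathcal{P})$ can be applied on both sides. The assumption $\rho > 11$ is dictated precisely by this requirement in the $p$-resonance sub-case, and is what allows the sandwich estimate to hold uniformly down to threshold regardless of the low-energy spectral configuration.
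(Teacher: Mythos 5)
Your overall strategy is the right one, but first a point of comparison: the paper does not prove this statement at all --- it is quoted from \cite[Theorem 1.1]{richard21} --- so your proposal is an independent argument rather than a reconstruction of anything in the text. The plan you describe (the stationary formula $S(\lambda)-\textup{Id}=-2\pi i\,\Gamma_0(\lambda)T(\lambda+i0)\Gamma_0(\lambda)^*$, the expansion $\Gamma_0(\lambda)=2^{-\frac12}(\gamma_0+i\lambda^{\frac12}\gamma_1+O(\lambda))$ with no helpful power of $\lambda$ in dimension two, and a low-energy expansion of $T$) is exactly the strategy the paper uses for $n\geq 4$ in Theorem \ref{thm:scatmatzero}, and is also the strategy of the cited reference.

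However, as written the sketch has concrete gaps. (i) In the generic case $T(\lambda+i0)$ is \emph{not} of size $(\ln\lambda)^{-1}$. Writing $R_0(z)V=G_0V+g(z)PV+o(1)$ with $g(z)\sim c\ln z$ and $P$ rank one, the rank-one inversion formula gives $g/(1+ga)\to 1/a$, so $T(\lambda+i0)$ converges to a finite, generically nonzero limit $T_0$; what vanishes (at logarithmic rate) is $T(\lambda+i0)\gamma_0^*$, because $T_0$ annihilates constants through precisely this cancellation. You must exhibit that cancellation; the claim ``$T\to 0$'' is false and the argument would not close without replacing it. (ii) When zero-energy eigenfunctions are present, the pole $\lambda^{-1}VP_0V$ sandwiched between $\gamma_0\mp i\lambda^{\frac12}\gamma_1+\cdots$ leaves an $O(1)$ term $\gamma_1VP_0V\gamma_1^*$ that is not removed by $\int V\psi=0$ alone; you also need the first moments $\int x_jV\psi\,\d x=0$. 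These do hold for genuine $L^2(\R^2)$ zero-energy eigenfunctions (this is part of the Jensen--Nenciu characterisation distinguishing eigenfunctions from $s$- and $p$-resonances), but your argument only addresses the zeroth moment. (iii) The resonance terms in the two-dimensional expansion are not merely ``negative powers of $\ln\lambda$'': as in the four-dimensional Lemma \ref{lem:res4d}, mixed terms with prefactors of the form $\lambda^{-1}(a-\ln\lambda)^{-1}$ occur, and disposing of them again requires moment identities for the associated resonance functions rather than the logarithm alone. With these three points repaired the argument should go through, but each is a place where the proof as proposed would fail.
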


As in dimension $n = 1$, the behaviour of the scattering matrix at zero in dimension $n = 3$ is dependent on the existence of resonances and has been determined in \cite[Theorems 5.1-5.3]{jensen79}, which we state below.

\begin{thm}
Suppose that $\rho > 5$ in Assumption \eqref{ass11} and $n = 3$. Then we have
\begin{align*}
S(0) &= \textup{Id} - 2 P,
\end{align*}
where $P = 0$ if there are no resonances and $P$ is the projection onto the spherical harmonic subspace of order $0$ if there does exist a resonance.
\end{thm}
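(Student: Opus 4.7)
The plan is to start from the stationary formula
\[
S(\lambda) = \textup{Id} - 2\pi i\, \Gamma_0(\lambda)\, T(\lambda+i0)\, \Gamma_0(\lambda)^*
\]
of Theorem \ref{thm: stationary scattering operator}, combine it with the Jensen--Kato resolvent expansion of $T(z)$ in dimension $n=3$ (cited as \cite{jensen79}), and control the trace factors via the expansion
\[
\Gamma_0(\lambda) = 2^{-\frac12}\lambda^{\frac14}\gamma_0 + o\bigl(\lambda^{\frac34}\bigr)
\]
from Lemma \ref{lem:gammaexp}. The assumption $\rho>5$ is exactly what is needed so that $T(\lambda+i0)$ makes sense in $\mathcal{B}(H^{1,-t},H^{1,-t})$ for $t\in(\tfrac52,\rho-\tfrac52)$ and so that the traces $\Gamma_0(\lambda)$ extend to this space.

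The non-resonant case is the easier one. In dimension $3$, if $H$ has no zero resonance then the Jensen--Kato expansion gives $T(z)=T(0)+o(1)$ in $\mathcal{B}(H^{1,-t},H^{1,-t})$ as $z\to0$ with $T(0)$ bounded. Combining with $\Gamma_0(\lambda)=O(\lambda^{1/4})$, the full composition satisfies
\[
\Gamma_0(\lambda)\,T(\lambda+i0)\,\Gamma_0(\lambda)^* = O\bigl(\lambda^{\tfrac12}\bigr)\longrightarrow 0,
\]
so $S(0)=\textup{Id}$ and we recover $P=0$.

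The resonant case is where the nontrivial structure appears. The relevant ingredient from \cite[\S 3--5]{jensen79} is that the leading singular term of $T(z)$ as $z\to 0$ has the rank-one form
\[
T(z) = c\,z^{-\frac12}\, \lvert V\psi\rangle\langle V\psi\rvert + O(1),
\]
with $\psi$ a canonical resonance function and $c$ an explicit constant determined by the normalisation of $\psi$ (as recorded in \cite[Proposition 7.4.10]{yafaev10}). Substituting the expansion of $\Gamma_0(\lambda)$, the prefactor $\lambda^{1/4}\cdot \lambda^{-1/2}\cdot \lambda^{1/4}=\lambda^0$ produces a finite limit, and all higher-order terms drop out. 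The limit is a rank-one operator on $\mathcal{P}=L^2(\Sf^2)$ of the form
\[
\pi i c\, \bigl|\gamma_0(V\psi)\bigr\rangle\bigl\langle \gamma_0(V\psi)\bigr|.
\]
Since $\gamma_0 f = (2\pi)^{-3/2}\int_{\R^3}f(x)\,\d x\cdot \mathbf{1}_{\Sf^2}$ is a constant function on $\Sf^2$, the range of $\gamma_0$ is precisely the $\ell=0$ spherical harmonic subspace. The remaining task is to verify that the explicit constant $c$ and the normalisation of $\psi$ conspire so that the coefficient equals exactly $-2$, giving $S(0)=\textup{Id}-2P$ with $P$ the orthogonal projection onto the constants.

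The main obstacle is bookkeeping of constants and branch choices. The $z^{-1/2}$ term depends on which branch of the square root is used as $z\to 0$ from the upper half-plane, and the normalisation constant $c$ is pinned down only by tracking the coefficient through the Hankel function expansion of the free resolvent kernel $R_0(z,x,y) = (4\pi|x-y|)^{-1}\e^{i\sqrt{z}|x-y|}$. One must also check that the ``$O(1)$'' remainder in the Jensen--Kato expansion does not contribute in the limit after sandwiching with $\Gamma_0(\lambda)$; this is automatic from the $\lambda^{1/2}$ prefactor, but requires that the remainder is genuinely bounded in the appropriate $H^{1,-t}\to H^{1,-t}$ norm rather than in a weaker sense.
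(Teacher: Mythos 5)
First, a remark on the comparison: the paper does not prove this statement at all --- it is quoted verbatim from \cite[Theorems 5.1--5.3]{jensen79}. Your strategy (stationary formula for $S(\lambda)$, Jensen--Kato expansion of $T(z)$, expansion of $\Gamma_0(\lambda)$ from Lemma \ref{lem:gammaexp}) is exactly the strategy the paper itself uses for the analogous statement in dimensions $n\geq 4$ (Theorem \ref{thm:scatmatzero}), so the architecture is the right one. However, your non-resonant case contains a genuine gap. The hypothesis ``no resonances'' does not exclude a zero-energy eigenvalue, and when $\ker H\neq 0$ the Jensen--Kato expansion of $T(z)$ contains the singular term $z^{-1}VP_0V$; your assertion that $T(z)=T(0)+o(1)$ with $T(0)$ bounded is then false. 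In dimension $3$ the prefactor contributed by the two trace operators is only $\lambda^{1/2}$ (unlike $n\geq 5$, where $\lambda^{(n-2)/2}$ with $\tfrac{n-2}{2}>1$ kills the $z^{-1}$ term outright), so sandwiching $z^{-1}VP_0V$ produces a contribution of order $\lambda^{-1/2}$, which diverges. The conclusion $S(0)=\textup{Id}$ survives only because of the cancellation $P_0V\gamma_0^*=0$, equivalently $\int_{\R^3}V\phi\,\d x=0$ for every $\phi\in\ker H$ --- precisely the condition that distinguishes an $L^2$ zero-energy eigenfunction from a resonance in dimension $3$. This is the same cancellation the paper must invoke explicitly (via \cite[Lemma 3.3]{jensen84}) in its own $n=4$ argument; your proof needs it and does not supply it.

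Second, in the resonant case you establish only that $S(0)-\textup{Id}$ is some nonzero multiple of the rank-one projection onto the constants on $\Sf^2$, and you defer the value of that multiple to ``bookkeeping.'' But the exact value $-2$ \emph{is} the content of the theorem: it is what makes $S(0)$ unitary and $S(0)-\textup{Id}$ equal to $-2P$ for an orthogonal projection $P$. Pinning it down requires the canonical normalisation of the resonance function (in the convention of \cite{jensen79} and \cite[Proposition 7.4.10]{yafaev10}, $|\int_{\R^3}V\psi\,\d x|^2=4\pi$) together with the coefficient of the $z^{-1/2}$ term fixed by the branch $\textup{Im}\sqrt{z}>0$. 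A way to close this without tracking every constant is to observe that $S(0)$ is a norm limit of unitaries, hence unitary, so writing $S(0)=\textup{Id}+\alpha P$ with $P$ rank one forces $|1+\alpha|=1$; once one checks that $\alpha$ is real and nonzero (the factor $-2\pi i$ against the purely imaginary coefficient of $z^{-1/2}$ makes it real), this leaves $\alpha=-2$ as the only possibility. As written, your proposal identifies the correct shape of the answer but does not yet prove the stated equality.
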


\subsubsection{The scattering matrix at zero energy in dimensions $n \geq 4$}

The next statement was almost certainly known to Jensen based on the comments in \cite{jensen80}.

\begin{thm}\label{thm:scatmatzero}
Suppose that $n \geq 4$ and that $\rho$ and $V$ satisfy Assumption \ref{ass:best-ass}. Then we have
\begin{align*}
S(0) &= \textup{Id}.
\end{align*}
\end{thm}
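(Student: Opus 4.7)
The plan is to begin from the stationary formula of Theorem~\ref{thm: stationary scattering operator}, which for $\lambda>0$ states
\[
S(\lambda)-\Id = -2\pi i\,\Gamma_0(\lambda) T(\lambda+i0) \Gamma_0(\lambda)^*,
\]
and to show that the right-hand side converges to zero in operator norm on $\mathcal{P}$ as $\lambda\to 0^+$. The two ingredients are the asymptotic $\Gamma_0(\lambda)=2^{-1/2}\lambda^{(n-2)/4}\gamma_0+o(\lambda^{n/4})$ from Lemma~\ref{lem:gammaexp} and the low-energy expansion of $T(\lambda+i0)$ from Lemma~\ref{lem:res4d} (for $n=4$) or Lemma~\ref{lem:resexpbig} (for $n\geq 5$). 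Sandwiching by the two outer factors produces an overall scaling of order $\lambda^{(n-2)/2}$, to be balanced against the poles of $T$.

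For $n\geq 5$ the argument is pure power counting. Lemma~\ref{lem:resexpbig} gives $T(\lambda+i0)=\lambda^{-1}VP_0V+o(1)$, so the sandwich yields $\tfrac12\lambda^{(n-4)/2}\gamma_0 VP_0V\gamma_0^*+o(\lambda^{(n-2)/2})$. Since $(n-4)/2\geq 1/2$, both contributions vanish as $\lambda\to 0^+$ and no cancellation is needed.

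The case $n=4$ is more delicate. The four terms in Lemma~\ref{lem:res4d} must be handled in turn. The $-\ln(\lambda)C$ and $O(1)$ pieces pick up the factor $\tfrac12\lambda$ from the outer $\Gamma_0$'s and so contribute $O(\lambda\ln\lambda)$ and $O(\lambda)$, both of which vanish. The resonance term yields at leading order
\[
\tfrac12(a-\ln\lambda)^{-1}\,|\gamma_0 V\psi\rangle\langle\gamma_0 V\psi|,
\]
which vanishes because $|a-\ln\lambda|\to\infty$. The substantive contribution comes from $\lambda^{-1}VP_0V$, whose sandwich has a finite limit $\tfrac12\gamma_0 VP_0V\gamma_0^*$, and the claim reduces to showing that this operator is zero. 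Writing $P_0=\sum_i|\psi_i\rangle\langle\psi_i|$ with $\{\psi_i\}$ an orthonormal basis of $\ker H$, this is equivalent to
\[
\int_{\R^4} V\psi_i(x)\,\d x = 0 \qquad \text{for each $i$.}
\]

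The main obstacle is this last vanishing. The plan is to use $H\psi_i=0$ to rewrite $V\psi_i=\Delta\psi_i$ weakly, then combine the integral representation $\psi_i=-R_0(0)V\psi_i$, the $|x-y|^{-2}$ decay of the four-dimensional free Green's function, and the strong decay hypothesis $\rho>12$ from Assumption~\ref{ass:best-ass} to conclude that $V\psi_i\in L^1(\R^4)$ and that $\hat\psi_i$ is continuous at the origin. Fourier inversion then yields
\[
(2\pi)^{-2}\int_{\R^4} V\psi_i\,\d x = \widehat{V\psi_i}(0) = \widehat{\Delta\psi_i}(0) = 0.
\]
This is exactly what separates honest $L^2$ eigenfunctions from the zero-energy resonance function in dimension four, whose contribution has already been suppressed by the logarithmic factor; it is the step where the hypotheses of Assumption~\ref{ass:best-ass} are used most forcefully.
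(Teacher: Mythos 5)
Your proposal follows essentially the same route as the paper: the stationary formula $S(\lambda)-\textup{Id} = -2\pi i\,\Gamma_0(\lambda)T(\lambda+i0)\Gamma_0(\lambda)^*$, the expansion of $\Gamma_0(\lambda)$ from Lemma \ref{lem:gammaexp}, pure power counting for $n\geq 5$ (where indeed no cancellation is available or needed), and for $n=4$ the identification of $\tfrac12\gamma_0 VP_0V\gamma_0^*$ as the only term requiring a cancellation. The one place you diverge is that the paper disposes of this last term by citing \cite[Lemma 3.3]{jensen84} for $P_0V\gamma_0^*=0$, whereas you sketch a proof that $\int_{\R^4}V\psi_i\,\d x=0$ for each zero-energy eigenfunction. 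Your mechanism is the right one, but as written the step ``$\hat\psi_i$ is continuous at the origin'' risks circularity (that continuity would follow from $\psi_i\in L^1$, which itself requires the vanishing moment); the clean version is that $\hat\psi_i(\xi)=-|\xi|^{-2}\widehat{V\psi_i}(\xi)$ with $\widehat{V\psi_i}$ continuous, and since $|\xi|^{-2}$ fails to be locally square-integrable in $\R^4$, membership of $\hat\psi_i$ in $L^2$ forces $\widehat{V\psi_i}(0)=0$. This is precisely the content of the cited lemma, so your argument fills in a reference rather than replacing the proof strategy.
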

\begin{proof}
We recall the stationary formula for the scattering matrix from Equation \eqref{eq: scat matrix defn} and apply a resolvent identity to obtain
\begin{align*}
S(\lambda) &= \textup{Id}- 2\pi i \Gamma_0(\lambda) (V-VR(\lambda + i0)V) \Gamma_0(\lambda)^* \\
&= \textup{Id} - 2\pi i \Gamma_0(\lambda) V (\textup{Id}+R_0(\lambda+i0)V)^{-1} \Gamma_0(\lambda)^*\\
&= \textup{Id} - 2\pi i \Gamma_0(\lambda) T(\lambda+i0) \Gamma_0(\lambda)^*.
\end{align*}
As $\lambda \to 0$ we can then apply Lemma \ref{lem:gammaexp} to obtain the expansion for $\Gamma_0(\lambda)$ and $\Gamma_0(\lambda)^*$. 
%We recall $T(z) = V(\textup{Id}+R_0(z)V)^{-1}$ and analyse the behaviour of the scattering matrix near $\lambda = 0$ on a case by case basis
%using the results of Section \ref{sec:resolvents}.

For $n \geq 5$ we apply Lemma \ref{lem:resexpbig}, and the result is the expansion
\begin{align*}
\Gamma_0(\lambda) T(\lambda + i0) \Gamma_0(\lambda)^* &=  2^{-1} \lambda^{\frac{n-2}{2}} (\gamma_0-i \lambda^\frac12 \gamma_1 + o(\lambda))(VP_0 V \lambda^{-1} +o(1))(\gamma_0^* +i \lambda^\frac12 \gamma_1^* +o(\lambda)) \\
&\to 0 
\end{align*}
as $\lambda \to 0$ since $\frac{n-2}{2} > 1$.

In the case $n = 4$, we use Lemma \ref{lem:res4d} to obtain (with $\psi$ the normalised zero energy resonance of Definition \ref{defn:res4d}) the expansion 
\begin{align*}
&\Gamma_0(\lambda) T(\lambda + i0) \Gamma_0(\lambda)^*\\ 
&=  2^{-1} \lambda \Big(\gamma_0-i \lambda^\frac12 \gamma_1 + o(\lambda)\Big)\Big(VP_0 V \lambda^{-1} +\lambda^{-1}\Big(a-\ln(\lambda)-i\frac{\pi}{2}\Big)^{-1} \langle V\psi,\cdot \rangle V\psi+o(\ln(\lambda))\Big)  \\
&\qquad \times\Big(\gamma_0^* +i \lambda^\frac12 \gamma_1^* +o(\lambda)\Big) 
\to 0 
\end{align*}
as $\lambda \to 0$, since by \cite[Lemma 3.3]{jensen84} we have $P_0V\gamma_0^* = 0$.
\end{proof}

%%%%%%%%%%%%%%%%%%%%%%%%%%%%%%%%%%%%%%%%%%%%%%%%%%%%%%%%%%%%%%%%%%%%%
%%%%%%%%%%%%%%%%%%%%%%%%%%%%%%%%%%%%%%%%%%%%%%%%%%%%%%%%%%%%%%%%%%%%%
%%%%%%%%%%%%%%%%%%%%%%%%%%%%%%%%%%%%%%%%%%%%%%%%%%%%%%%%%%%%%%%%%%%%%

\section{The form of the wave operator}
\label{sec:wave-op}

We aim to show the wave operator is of the form
\begin{align}
W_- &= \textup{Id}+\frac12 \left(\textup{Id}+\tanh{(\pi D_n)} - i \cosh{(\pi D_n)}^{-1} \right)(S-\textup{Id})+K,
\end{align}
where $D_n$ is the generator of dilations on $L^2(\R^n)$, $S$ the scattering matrix and $K$ a compact operator. We will frequently use the notation $\vp(x) = \frac12 (1+\tanh{(\pi x)} - i\cosh{(\pi x)}^{-1})$. 
%In this section we derive explicit expressions for the wave operators in dimensions $n \geq 4$ based on the resolvent expansions of Section \ref{sec:resolvents}. In dimensions $n \geq 5$ there can be no resonances, making the analysis simpler. In dimension $n = 4$ we must exclude the possibility of resonances in our analysis. 
The main result of this section is the following theorem, whose proof relies on several preparatory results. %We again use the notation $\vp(x) = \frac12 (1+\tanh{(\pi x)}-i \cosh{(\pi x)}^{-1})$.

\begin{thm}
\label{thm:main}
Let $n\geq 2$ and suppose that $\rho$ satisfies Assumption \ref{ass:best-ass}, and let $V$ satisfy $|V(x)| \leq C (1+|x|)^{-\rho}$ for almost every $x \in \R^n$.  In dimension 2 we suppose also that there are no $p$-resonances and in dimension 4 we suppose there are no resonances.
%If $n = 2$, let $\rho > 11$. If $n = 3$, let $\rho > 5$. If $n = 4$ let $\rho > 12$ and suppose there are no resonances. If $n = 5$, let $\rho > 7$.  If $n = 6$, let $\rho > 6$. If $n  \geq 7$, let $\rho > n+1$. Let $V$ satisfy $|V(x)| \leq C (1+|x|)^{-\rho}$ for almost every $x \in \R^n$. 
Then  in $\mathcal{B}(\H)$ 
\begin{align*}
W_- &= \textup{Id}+\vp(D_n)(S-\textup{Id})+K,
\end{align*}
where $K \in \mathcal{K}(\H)$ and $\vp(x) = \frac12 (1+\tanh{(\pi x)}-i \cosh{(\pi x)}^{-1})$.
\end{thm}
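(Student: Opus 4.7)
The plan is to follow the Kellendonk-Richard-Tiedra de Aldecoa template: rewrite the stationary formula for $W_-$ as an integral operator in the spectral representation, extract the principal piece as $\vp(D_n)(S-\textup{Id})$, and show the rest is compact. Dimension $n=3$ is covered by \cite{richard13}, so my target is $n=2$ (absent $p$-resonances) and $n\geq 4$ (in $n=4$, absent resonances). The higher-dimensional resolvent expansions of Lemmas \ref{lem:res4d} and \ref{lem:resexpbig}, together with $S(0)=\textup{Id}$ from Theorem \ref{thm:scatmatzero}, are the substitutes for the case-specific formulae available in low dimensions.

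First, I would insert the resolvent identity $R(\lambda-i\varepsilon) = R_0(\lambda-i\varepsilon) - R_0(\lambda-i\varepsilon)\, T(\lambda-i\varepsilon)\, R_0(\lambda-i\varepsilon)$ into \eqref{eq:statwaveop}, pass $\varepsilon\to 0$ via the limiting absorption principle, and transfer everything to the spectral representation via $F_0$. The $R_0$-only term produces the identity; the remainder is an integral operator $-2\pi i \mathcal{L}$ on $\H_{spec}$ whose distributional kernel in the spectral variables $\lambda,\mu\in\R^+$ is
\[
K(\lambda,\mu) \;=\; \frac{1}{\lambda-\mu-i0}\,\Gamma_0(\lambda)\,T(\mu-i0)\,\Gamma_0(\mu)^{*}.
\]
Applying the Plemelj decomposition $(\lambda-\mu-i0)^{-1} = \textup{P.V.}(\lambda-\mu)^{-1} + i\pi\,\delta(\lambda-\mu)$, the delta contribution collapses to a fibrewise multiplier in $\lambda$ which, by Theorem \ref{thm: stationary scattering operator}, is a constant multiple of $S(\lambda)-\textup{Id}$.

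Second, for the principal value part I would freeze $\Gamma_0(\lambda)$ at $\mu$ by writing $\Gamma_0(\lambda) = \Gamma_0(\mu) + [\Gamma_0(\lambda)-\Gamma_0(\mu)]$. The frozen kernel $\textup{P.V.}(\lambda-\mu)^{-1}\,\Gamma_0(\mu)\,T(\mu-i0)\,\Gamma_0(\mu)^{*}$ acts on the $\lambda$ variable purely through $\textup{P.V.}(\lambda-\mu)^{-1}$, which after the substitution $s=\tfrac{1}{2}\ln\lambda$ becomes a convolution on $\R$ whose Fourier multiplier is (up to constants) the function $x\mapsto \tanh(\pi x) - i\cosh(\pi x)^{-1}$. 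Under $F_0$ this lifts to $D_n$-functional calculus on $\H$, so combining with the $\delta$-contribution gives the claimed $\vp(D_n)(S-\textup{Id})$ summand.

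The main obstacle is to show that the freezing error
\[
\textup{P.V.}\int_0^\infty \frac{[\Gamma_0(\lambda) - \Gamma_0(\mu)]\,T(\mu-i0)\,\Gamma_0(\mu)^{*}}{\lambda-\mu}\,\d\mu,
\]
defines a compact operator on $\H_{spec}$. I would split the $\mu$-integration into low-energy, bulk, and high-energy zones. At bulk energies, Hölder continuity of $\lambda\mapsto\Gamma_0(\lambda)$ (extracted from the $\gamma_j$-expansion preceding Lemma \ref{lem:gammaexp}) combines with boundedness of $T(\mu-i0)$ between weighted Sobolev spaces to give Hilbert-Schmidt control after an angular commutator argument on $\mathcal{P} = L^2(\Sf^{n-1})$. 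At high energies, Lemma \ref{lem:resolvents} supplies the requisite decay. The delicate zone is $\mu\to 0^{+}$: Lemmas \ref{lem:res4d} and \ref{lem:resexpbig} expose $\mu^{-1}$ poles and, in $n=4$, a logarithmic singularity, which would be fatal without cancellation. The vanishing $\Gamma_0(\mu) \sim 2^{-1/2}\mu^{(n-2)/4}\gamma_0$ from Lemma \ref{lem:gammaexp} and the identity $S(0) = \textup{Id}$ from Theorem \ref{thm:scatmatzero} provide exactly the cancellation needed in the listed dimensions, and this is also the precise reason that the excluded cases (zero-energy $p$-resonance for $n=2$, zero-energy resonance for $n=4$) must be ruled out of the hypothesis.
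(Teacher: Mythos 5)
For $n\ge 4$ your outline is essentially the paper's argument (which follows \cite{richard13}): stationary formula, factorisation of the kernel through $\Gamma_0(\lambda)T(\mu\mp i0)\Gamma_0(\mu)^*$, identification of the singular factor as a Mellin multiplier in the dilation generator, and compactness of the freezing error, which is precisely the commutator $[\psi(D_+)\otimes\textup{Id},G_-]B$ of the paper's Lemma \ref{lem:commutator}. Two steps are glossed in a way that hides real content. First, sending $\eps\to 0$ cannot be done fibrewise by the limiting absorption principle alone: one must interchange the limit with the integral over the spectral variable, and this (Lemma \ref{lem:part1}, via the representation $(\mu-\lambda+i\eps)^{-1}=-i\int_0^\infty\e^{i(\mu-\lambda)z}\e^{-\eps z}\,\d z$ and a dominated convergence argument) is exactly where the strong decay $\rho>n+t+2$ of Assumption \ref{ass:best-ass} is consumed. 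Second, your multiplier identity is wrong as stated: the unweighted kernel $\textup{P.V.}(\lambda-\mu)^{-1}$ on $L^2(\R^+)$ is Mellin convolution by $(2\sinh(u/2))^{-1}$, whose Fourier transform is a pure $\tanh$; the $-i\cosh(\pi x)^{-1}$ part of $\vp$ appears only after the asymmetric weighting $(\lambda/\mu)^{1/4}$ coming from the split $G_-(\lambda)=\lambda^{-1/4}\Gamma_0(\lambda)$, $B(\mu)=\mu^{1/4}T(\mu+i0)\Gamma_0(\mu)^*$, which turns the kernel into $\e^{u/4}(2\sinh(u/2))^{-1}=\tfrac14(\sinh(u/4)^{-1}+\cosh(u/4)^{-1})$. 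Since $\cosh(\pi D_n)^{-1}(S-\textup{Id})$ is compact (Lemma \ref{lem:f(x)g(nabla)}, using $S(0)=S(\infty)=\textup{Id}$), your version still lands on a formula equivalent to \eqref{eq:waveopform} modulo compacts, but the computation should be corrected.

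The genuine gap is dimension $2$, which you list as a target but for which you supply no tools: Lemmas \ref{lem:res4d} and \ref{lem:resexpbig} and Theorem \ref{thm:scatmatzero} all require $n\ge 4$. The theorem permits $s$-resonances in dimension $2$, and there your low-energy mechanism fails: $\Gamma_0(\lambda)\sim 2^{-1/2}\lambda^{(n-2)/4}\gamma_0$ supplies no vanishing when $n=2$, the expansion of $T(\lambda+i0)$ retains singular resonant terms, and the analogue of the boundedness of $B(\lambda)$ near $\lambda=0$ (Lemma \ref{lem:mult}) is exactly what breaks. Moreover $S(0)=\textup{Id}$ holds in dimension $2$ with or without $s$-resonances, so it cannot be ``the precise reason'' the admissible cases work; indeed the correct formula in the $s$-resonant case (Theorem \ref{thm:2-dim-no-pee}, quoted from \cite{richard21}) carries a \emph{different} symbol $\tfrac12(1+\tanh(\pi D_2/2))$ on the resonant part $\eta(H_0)$ of $S-\textup{Id}$, not the single function $\vp$. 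The paper does not push the machinery through $n=2$ at all: it imports \cite[Theorem 1.3]{richard21} wholesale and then shows the discrepancy $\bigl(\tfrac12(1+\tanh(\pi D_2/2))-\vp(D_2)\bigr)\eta(H_0)$ is compact because the difference of symbols lies in $C_0\cap L^2$ and $\eta$ vanishes at $0$ and $\infty$ (Lemma \ref{lem:f(x)g(nabla)}). Your proposal needs either this reduction or a genuinely new two-dimensional low-energy analysis.
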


We begin with the special case of dimension two, before setting the scene for the remainder of the proof. The main elements of the proof are: a careful analysis of the diagonalisation maps of Definition \ref{def:diag}; an analysis of the stationary (integral) formula for the wave operator using this information; an interchange of limits in the integral formula; finally an identification of a function of dilation and compact remainder. Our proof closely follows that of \cite{richard13}.

\subsection{Dimension 2}
%The expressions obtained here are based on formulas in one dimension \cite{kellendonk08}, two dimensions without resonances \cite{richard13ii} and with resonances \cite{richard21} and three dimensions \cite{richard13}. The method of proof follows closely that of \cite{richard13}, requiring only some dimension dependent changes.
%We note that the decay assumptions here may not be optimal. We begin with a discussion of known results in lower dimensions first.
%
%The statement of Theorem \ref{thm:main} is known in dimension $n=3$ \cite[Theorem 1.1]{richard13} and will be included in our later discussion. The formula for the wave operator differs slightly in dimension $n = 1$, and we refer to \cite{}.

%, with an extra prefactor on the term $\cosh{(\pi D_1)}^{-1}$.
%
%\begin{thm}
%Let $n=1$ and suppose $\rho > \frac{5}{2}$. Then we have
%\begin{align*}
%W_- &= \textup{Id}+\frac12 ( \textup{Id}+\tanh{(\pi D_1)} -i A \cosh{(\pi D_1)}^{-1})(S-\textup{Id})+K,
%\end{align*}
%where $K$ is compact and $[Af](x) = f(-x)$ denotes the antipodal map.
%\end{thm}

We first recall the known form of the wave operator in two dimensions in the presence of $s$-resonances. We then massage the formula from Theorem \ref{thm:2-dim-no-pee} to obtain the statement in Theorem \ref{thm:main}. The following is \cite[Theorem 1.3]{richard21}

\begin{thm}
\label{thm:2-dim-no-pee}
Let $n = 2$ and $V$ satisfy Assumption \eqref{ass11} for some $\rho > 11$ and suppose that there are no $p$-resonances. Then there exist two continuous functions $\eta,\tilde{\eta}:\R^+ \to \mathcal{K}(L^2(\Sf^1))$ vanishing at $0$ and $\infty$ and satisfying $\eta(H_0)+\tilde{\eta}(H_0) = S-\textup{Id}$, and such that
\begin{align*}
W_- &= \textup{Id} + \frac12\left(\textup{Id}+\tanh{\left(\frac{\pi D_2}{2}\right)}\right) \eta(H_0) +\frac12 \left(\textup{Id}+\tanh{(\pi D_2)}-i\cosh{(\pi D_2)}^{-1}\right) \tilde{\eta}(H_0)+K,
\end{align*}
with $K \in \mathcal{K}(\H)$.
\end{thm}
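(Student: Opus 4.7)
My plan is to follow the general approach of \cite{richard13}, adapted to the delicate low-energy structure of dimension two that is analysed in \cite{jensen01, richard13ii, richard21}. The first step is to start from the stationary formula \eqref{eq:statwaveop}, insert the resolvent identity $R-R_0 = -R_0 V R$, and rewrite the integrand using $T(\lambda+i0) = V-VR(\lambda+i0)V$ so that $W_- - \textup{Id}$ appears as an integral over $\R^+$ with $T(\lambda+i0)$ sandwiched between free resolvents. Conjugating by the diagonalising unitary $F_0$ of Definition \ref{def:diag} and interchanging limit with integral produces an integral operator on $\H_{spec}$ whose kernel factors as $\Gamma_0(\lambda)T(\lambda+i0)\Gamma_0(\mu)^*$ against a Poisson-type distribution in $(\lambda,\mu)$. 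Splitting this distribution into its half-residue piece and its principal-value piece extracts $\tfrac{1}{2}(S(\lambda)-\textup{Id})$ on the diagonal via \eqref{eq: scat matrix defn}, leaving a principal-value kernel to be identified.

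The second step is to exploit the two-dimensional low-energy asymptotics of $T(z)$. Without $p$-resonances, the expansion has two distinct singular layers: a $z^{-1}$ contribution carried by $s$-resonance and zero-eigenvalue data, and a slower $(\ln z)^{-1}$ contribution arising from the finite-rank part of $\textup{Id}+G_0 V$ (see \cite{jensen01}). Each layer, after the substitution $\lambda = \e^{2s}$ on the spectral side, becomes a convolution in $s$ whose Fourier transform identifies it with a function of $D_2$: a direct computation shows that the Poisson-type kernel Fourier-transforms to $\tfrac{1}{2}(1+\tanh(\pi x)-i\sech(\pi x)) = \vp(x)$, whereas the $(\ln\lambda)^{-1}$ layer produces instead $\tfrac{1}{2}(1+\tanh(\pi x/2))$. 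Bundling the half-residue together with each principal-value contribution yields the two displayed terms, with $\eta$ and $\tilde\eta$ emerging as the corresponding operator-valued continuous functions on $\R^+$ by combining Theorem \ref{thm: stationary scattering operator} with the leading-order resolvent analysis; by construction their sum equals $S-\textup{Id}$, and vanishing at $0$ and $\infty$ follows from Theorem \ref{thm:scatmatzero} adapted to $n=2$ together with standard high-energy decay of the scattering matrix.

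The third step is to collect everything else into the compact remainder $K$: off-diagonal pieces of the principal-value kernel, subleading terms in the resolvent expansion, and high-energy contributions controlled by Lemma \ref{lem:resolvents}. Each such remainder can be written, up to Hilbert--Schmidt errors, as a product $f(D_2)g(H_0)$ with $f\in C_0(\R)$ and $g\in C_0(\R^+)$ vanishing at both endpoints, which is compact by standard arguments in the diagonalising representation. The main obstacle is the bookkeeping of the two-dimensional low-energy expansion itself: unlike dimension three where a single leading singularity governs everything, in dimension two two different singular behaviours contribute in parallel, and one must verify that exactly the $p$-resonance-free hypothesis prevents a third, non-Mellin-compatible kind of singularity from appearing. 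The delicate tracking of error terms through this two-layer expansion, and the matching of each layer to precisely the right function of $D_2$, is what makes dimension two genuinely more intricate than the higher-dimensional cases and is where I would expect the bulk of the work to lie.
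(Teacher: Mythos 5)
The paper does not prove this statement at all: Theorem \ref{thm:2-dim-no-pee} is imported verbatim as \cite[Theorem 1.3]{richard21}, and the authors' only ``proof'' is that citation. So there is no in-paper argument to compare yours against; the relevant comparison is with the strategy of \cite{richard21} (which in turn generalises \cite{richard13}), and your outline is broadly faithful to it: stationary formula, resolvent identity to bring in $T(\lambda+i0)$, conjugation by $F_0$, splitting the Poisson-type distribution into a half-residue giving $\tfrac12(S-\textup{Id})$ plus a principal value identified as a Mellin convolution, hence a function of $D_2$, with the two threshold layers producing the two different functions $\tfrac12(1+\tanh(\pi x/2))$ and $\vp(x)$.

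That said, what you have written is a plan, not a proof: every analytically hard step --- the justification of the interchange of limits, the uniform low-energy control of $\lambda^{\pm 1/4}\Gamma_0(\lambda)$ and of $T(\lambda+i0)$ in weighted Sobolev norms, the commutator estimates needed to move $\psi(D_+)\otimes\textup{Id}$ past $G_-$, and the compactness of all remainders --- is deferred with phrases like ``is where I would expect the bulk of the work to lie.'' There is also a small internal inconsistency in your second step: you first attribute the $z^{-1}$ singular layer to ``$s$-resonance and zero-eigenvalue data,'' but in the Jensen--Nenciu two-dimensional expansion the $s$-resonance enters through the logarithmic layer (which is exactly why it is harmless and yields the $\tanh(\pi D_2/2)$ term via $\eta$, as you correctly say two sentences later), while the genuinely dangerous $z^{-1}$-type singularities come from the zero eigenvalue and the $p$-resonances --- the latter being precisely what the hypothesis excludes. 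Finally, note that Theorem \ref{thm:scatmatzero} in this paper is stated only for $n\geq 4$; for $n=2$ the vanishing of $\eta,\tilde\eta$ at $0$ rests on \cite[Theorem 1.1]{richard21}, not on an ``adaptation'' of that theorem. None of these points invalidates the outline, but as submitted it could not stand in for the proof in \cite{richard21}.
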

%
%The resonance behaviour plays a key role in the structure of the wave operator here, however we can recover our universal form rather simply. 

With the help of a technical result, we can recover our universal form of the wave operator from Theorem \ref{thm:2-dim-no-pee} rather simply. 
\begin{lemma}
\label{lem:f(x)g(nabla)}
Suppose that $f \in L^2(\R) \cap C(\R)$ and $g \in C_0(\R, \mathcal{K}(\mathcal{P}))$ is such that $g(\pm\infty) = 0$. Let $L$ denote the (densely defined) operator of multiplication by the variable in $L^2(\R^+)$. Then the operator $(f(D_+)\otimes \textup{Id}) g(\ln{(L)})$ defines a compact operator on $\H_{spec}$. Similarly if $h \in C(\R^+, \mathcal{K}(\mathcal{P}))$ is such that $h(0) = h(\infty) = 0$, then $(f(D_+) \otimes \textup{Id}) h(L)$ defines a compact operator on $\H_{spec}$.
\end{lemma}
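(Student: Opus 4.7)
The plan is to conjugate by a Mellin-type unitary that intertwines $D_+$ with the momentum operator on $L^2(\R)$ and $\ln(L)$ with the position operator, thereby reducing the claim to the textbook compactness of $f(P)g(X)$ with $f$ a square-integrable Fourier multiplier and $g$ a decaying multiplication operator, tensored with a compact-operator-valued coefficient.

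Concretely, I would introduce $U\colon L^2(\R^+)\to L^2(\R)$ defined by $(Uf)(t)=\e^{t/2}f(\e^t)$, and verify by direct calculation, using the explicit formula for $D_+$ from Section~\ref{subsec:ass-notes}, that $UD_+U^{*}=-i\partial_t$ and $U(\ln L)U^{*}=M_t$, the operator of multiplication by the variable on $L^2(\R)$. Under the induced unitary $\widetilde U=U\otimes\textup{Id}_{\mathcal P}$ from $\H_{spec}$ to $L^2(\R,\mathcal P)$, the operator $(f(D_+)\otimes\textup{Id})\,g(\ln L)$ is unitarily equivalent to $f(-i\partial_t)\,g(M_t)$, where $g(M_t)$ denotes fibrewise multiplication by the operator-valued function $g$.

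The main input is the scalar Hilbert--Schmidt estimate: for $\phi\in C_c(\R)$ the operator $f(-i\partial_t)\phi(M_t)$ on $L^2(\R)$ has integral kernel $(2\pi)^{-1/2}\widehat f(t-s)\phi(s)$, of $L^2(\R\times\R)$-norm $(2\pi)^{-1/2}\|f\|_{L^2}\|\phi\|_{L^2}<\infty$, so it is Hilbert--Schmidt, and tensoring with $K\in\mathcal K(\mathcal P)$ keeps it compact on $L^2(\R,\mathcal P)$. To handle operator-valued $g$, I would use the identification $C_0(\R,\mathcal K(\mathcal P))\cong C_0(\R)\otimes\mathcal K(\mathcal P)$ (coming from nuclearity of $C_0(\R)$) to approximate $g$ uniformly in the sup-norm by finite sums $g_N=\sum_{k=1}^{N}\phi_k\otimes K_k$ with $\phi_k\in C_c(\R)$ and $K_k\in\mathcal K(\mathcal P)$, so that each $f(-i\partial_t)g_N(M_t)$ is compact by the previous step. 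Since $f(-i\partial_t)$ is bounded on $L^2(\R)$,
\[
\bigl\|f(-i\partial_t)(g-g_N)(M_t)\bigr\|\leq\|f\|_\infty\,\|g-g_N\|_\infty\longrightarrow 0
\]
as $N\to\infty$, so $f(-i\partial_t)g(M_t)$ is a norm limit of compact operators and hence compact. Conjugating back by $\widetilde U^{*}$ gives the first claim.

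The second statement reduces to the first by setting $g(t):=h(\e^t)$: then $h(L)=g(\ln L)$, and the continuity of $h$ together with $h(0)=h(\infty)=0$ transfers to $g\in C(\R,\mathcal K(\mathcal P))$ with $g(\pm\infty)=0$. The main step requiring care is the uniform approximation of the operator-valued $g$ by elementary tensors with compactly supported scalar parts; apart from that, the argument is a standard compactness result for Fourier multipliers composed with decaying multiplications, transported through the Mellin unitary.
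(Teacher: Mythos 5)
Your argument is correct and follows essentially the same route as the paper's proof: both reduce, via density of the algebraic tensor product $C_0(\R)\odot\mathcal{K}(\mathcal{P})$ in $C_0(\R,\mathcal{K}(\mathcal{P}))$, to the compactness of a scalar operator of the form $f(P)\phi(X)$ tensored with a compact operator, after identifying $D_+$ and $\ln(L)$ with the canonically conjugate pair $-i\partial_t$ and $M_t$ via the logarithmic (Mellin) unitary --- the paper citing \cite[Theorem 4.1]{simon79} at the point where you compute the Hilbert--Schmidt kernel by hand. The only (shared) caveat is that your final estimate invokes $\Vert f\Vert_\infty$ while the paper's approximate-unit step asserts that $f$ vanishes at $\pm\infty$; neither follows from the stated hypothesis $f\in L^2(\R)\cap C(\R)$ alone, but both hold for the function to which the lemma is actually applied.
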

\begin{proof}
The algebraic tensor product $C_0(\R) \odot \mathcal{K}(\mathcal{P})$ is dense in $C_0(\R, \mathcal{K}(\mathcal{P}))$ by \cite[Theorem 1.15]{prolla77}, when $C_0(\R, \mathcal{K}(\mathcal{P}))$ is equipped with the uniform topology. Thus it suffices to prove that for $a \in C_0(\R)$ and $b \in \mathcal{K}(\mathcal{P})$ the operator
\begin{align*}
(f(D_+) \otimes \textup{Id})(a(\ln{(L)}) \otimes b) &= f(D_+) a( \ln{(L)}) \otimes b
\end{align*}
is compact. 
This follows from \cite[Theorem 4.1]{simon79} after a few steps. First we let $u_m\in C_c^\infty(\R)$ be an approximate unit for $C_0(\R)$. Then $u_mf$ and $u_ma$ are $L^2$-functions and so
\[
(u_mf)(D_+) (u_ma)(\ln{(L)}) \otimes b
\]
is compact by \cite[Theorem 4.1]{simon79}, and since $D_+$ and $\frac12 \ln{(L)}$ are canonically conjugate. Now because $f$ and $a$ vanish at $\pm \infty$ we readily see that $(u_mf)(D_+)\to f(D_+)$ in operator norm, and similarly for $(u_ma)(\ln{(L)})$. As the compacts are norm closed, the limit is compact. The final claim follows by writing $h = g \circ ( \ln)$.
\end{proof}
\begin{lemma}
Let $n = 2$ and $V$ satisfy Assumption \eqref{ass11} for some $\rho > 11$ and suppose that there are no $p$-resonances. Then $W_-$ is of the form given in Equation \eqref{eq:waveopform}.
\end{lemma}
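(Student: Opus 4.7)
The plan is to derive the stated formula from Theorem \ref{thm:2-dim-no-pee} by showing that the discrepancy between the two expressions is a compact operator that can be absorbed into $K$. Substituting $\tilde\eta(H_0) = (S-\textup{Id}) - \eta(H_0)$ into the formula of Theorem \ref{thm:2-dim-no-pee} and regrouping gives
\begin{align*}
W_- - \textup{Id} - \vp(D_2)(S-\textup{Id}) - K &= g(D_2)\,\eta(H_0),\\
\text{where}\qquad g(x) &:= \tfrac12\bigl(\tanh(\pi x/2) - \tanh(\pi x)\bigr) + \tfrac{i}{2}\cosh(\pi x)^{-1}.
\end{align*}
It therefore suffices to show that $g(D_2)\eta(H_0) \in \mathcal{K}(\H)$.

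The function $g$ is continuous on $\R$ and decays exponentially at $\pm\infty$, since $1-\tanh(\pi x) \sim 2\e^{-2\pi x}$ and $1-\tanh(\pi x/2) \sim 2\e^{-\pi x}$ as $x\to+\infty$ (and analogously at $-\infty$), while $\cosh(\pi x)^{-1}$ is likewise exponentially small. In particular $g \in L^2(\R) \cap C(\R)$, so $g(D_2)$ is well-defined and bounded. To bring the product $g(D_2)\eta(H_0)$ into the form of Lemma \ref{lem:f(x)g(nabla)} I would conjugate by the spectral unitary $F_0$ of Definition \ref{def:diag}, which by Lemma \ref{lem:eff-emm} converts $\eta(H_0)$ into $\eta(M)$, a $\mathcal{K}(\mathcal{P})$-valued multiplication operator whose symbol vanishes at $0$ and $\infty$.

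The one genuinely computational step is finding $F_0 D_2 F_0^*$. Using the definition of $F_0$ together with the standard identity $\F U_2(t) \F^{-1} = U_2(-t)$, a direct calculation with the change of variable $\mu = \e^{-2t}\lambda$ yields $F_0 U_2(t) F_0^* = U_+(-2t) \otimes \textup{Id}_{\mathcal{P}}$ on $\H_{spec}$; differentiating at $t=0$ then gives $F_0 D_2 F_0^* = -2\,D_+ \otimes \textup{Id}$. Consequently $g(D_2)\eta(H_0)$ is unitarily equivalent to $(f(D_+)\otimes \textup{Id})\,\eta(M)$, with $f(x) := g(-2x) \in L^2(\R) \cap C(\R)$. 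The second part of Lemma \ref{lem:f(x)g(nabla)}, applied with this $f$ and with $h := \eta$, shows that this operator is compact on $\H_{spec}$, and conjugating back yields $g(D_2)\eta(H_0) \in \mathcal{K}(\H)$. The main obstacle is really just verifying the intertwining of $D_2$ with $D_+$ under $F_0$; everything else is algebraic rearrangement followed by a direct invocation of the lemma.
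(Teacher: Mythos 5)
Your proposal is correct and follows essentially the same route as the paper: both isolate the discrepancy as $Y(D_2)\eta(H_0)$ with $Y(x)=\tfrac12(\tanh(\pi x/2)-\tanh(\pi x))+\tfrac{i}{2}\cosh(\pi x)^{-1}$ continuous and square-integrable, and conclude compactness via Lemma \ref{lem:f(x)g(nabla)}. The only difference is that you spell out the conjugation by $F_0$ and the intertwining $F_0 D_2 F_0^* = -2 D_+\otimes\textup{Id}$ needed to put the operator in the form required by that lemma, a step the paper leaves implicit (and records later in Equation \eqref{eq:dilationconj}).
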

\begin{proof}
Consider the difference
\begin{align*}
X &= F_0 \left(\frac12\left(\textup{Id}+\tanh{\left(\frac{\pi D_2}{2}\right)}\right) \eta(H_0) - \frac12\left(\textup{Id}+\tanh{(\pi D_2)}-i\cosh{(\pi D_2)}^{-1}\right) \eta(H_0)\right)F_0^* \\
%&= \frac12\left(\textup{Id}+\tanh{\left(\pi D_+\right)}\right) \eta(L) - \frac12\left(\textup{Id}+\tanh{(2D_+)}-i\cosh{(2D_+)}^{-1}\right) \eta(L) \\
&= \frac12\left(\textup{Id}+\tanh{\left(\pi D_+\right)}-\textup{Id}-\tanh{(2\pi D_+)}+i\cosh{(2\pi D_+)}^{-1}\right) \eta(L) \\
&=: Y(D_+) \eta(L).
\end{align*}
The function $\eta$ vanishes at $0$ and $ \infty$ and the function $Y$ is continuous and square integrable. Thus an application of Lemma \ref{lem:f(x)g(nabla)} gives that the operator $X$ is compact. Hence $W_-$ has the form given in Equation \eqref{eq:waveopform}.
\end{proof}

\subsection{Setting up the proof}

We will now provide a number of preparatory results before proceeding to the proof of Theorem \ref{thm:main} for $n\geq 4$: the case $n=3$ is proved in \cite{richard13}. 

The proof is broken up into a number of steps, whose ultimate goal is to factorise (in the spectral representation) the wave operator into the composition of several operators, whose mapping properties as operators between various weighted Sobolev spaces must be analysed. The properties of these factor operators depend heavily on their low energy behaviour and we use the resolvent expansions of Section \ref{sec:resolvents} to determine this behaviour.

\begin{defn}
\label{defn:phi-eps}
Define, for $\eps > 0$ and $\lambda \in \R$, the operator $\vp_\eps(H_0-\lambda) = \frac{\eps}{\pi} R_0(\lambda \mp i \eps) R_0(\lambda \pm i\eps)$ on $L^2(\R^n)$. 
Similarly for $M=F_0H_0F_0^*$ the operator of multiplication by the spectral variable, set $\vp_\eps(M-\lambda) = \frac{\eps}{\pi} (M-(\lambda \mp i \eps))^{-1} (M-(\lambda \pm i\eps))^{-1}$.
\end{defn}

By \cite[Section 1.4]{yafaev92}, the limits $\lim_{\eps \to 0}{\langle \vp_\eps(H_0-\lambda) f, g \rangle}$ exist for almost every $\lambda \in \R$ and $f,g\in L^2(\R^n)$. Moreover the limit satisfies
\begin{align*}
\langle f, g \rangle &= \int_\R{\left(\lim_{\eps \to 0}{\big\langle \vp_\eps(H_0-\lambda) f, g \big\rangle } \right)\, \d \lambda}.
\end{align*}

Using the resolvent identity $R(z) = (\textup{Id} +R_0(z)V)^{-1}R_0(z)$ and the stationary formula for the wave operator in Equation \eqref{eq:statwaveop} we can then show that
\begin{align*}
\langle (W_\pm - \textup{Id} ) f, g \rangle  &= - \int_\R{\left( \lim_{\eps \to 0}{\big\langle \vp_\eps(H_0 - \lambda) f, (\textup{Id} +VR_0(\lambda \pm i \eps))^{-1} V R_0(\lambda \pm i \eps) g \big\rangle } \right)\, \d \lambda}.
\end{align*}

We now aim to derive formulas for the wave operators in the spectral representation of $H_0$, that is, $F_0(W_--\textup{Id} )F_0^*$. Recall from Lemma \ref{lem:eff-emm}  the operator $M=F_0 H_0 F_0^*$ of multiplication by the spectral variable. For suitable $f, g \in \H_{spec}$ we compute that
\begin{align*}
&-\langle F_0(W_\pm - \textup{Id})F_0^* f, g \rangle_{\H_{spec}} \\
&= \!\int_\R{\!\left(\lim_{\eps \to 0}{\big\langle V(\textup{Id} +R_0(\lambda \mp i \eps) V)^{-1} F_0^* \vp_\eps(M-\lambda) f, F_0^* (M-\lambda \mp i \eps)^{-1} g \big\rangle_{\H}} \right)\, \d \lambda} \\
&= \!\int_\R{\!\left(\lim_{\eps \to 0}{\int_0^\infty{\!\!\!\!\Big\langle\!\! \left[F_0 V(\textup{Id} +R_0(\lambda \mp i \eps) V)^{-1} F_0^* \vp_\eps(M-\lambda) f \right]\!(\mu,\cdot), (\mu - \lambda \mp i \eps)^{-1} g(\mu,\cdot) \Big\rangle_{\mathcal{P}}\, \d \mu}} \right) \!\d \lambda}.
\end{align*}

With $T(z) = V(\textup{Id} +R_0(z)V)^{-1} = V - VR(z) V$ for $z \in \C \backslash \R$ we have
\begin{align}
\label{eq:long}
&\langle F_0^*(W_\pm - \textup{Id})F_0^* f, g \rangle_{\H_{spec}} \nonumber \\
&= -\int_\R{\left(\lim_{\eps \to 0}{\int_0^\infty\!\!{\big\langle\left[F_0 T(\lambda \mp i \eps) F_0^* \vp_\eps(M-\lambda) f \right](\mu,\cdot), (\mu - \lambda \mp i \eps)^{-1} g(\mu,\cdot) \big\rangle_{\mathcal{P}}\, \d \mu}} \right)\,\d \lambda}.
\end{align}

To interchange the limit as $\eps \to 0$ and the integral over $\mu$ of Equation \eqref{eq:long}, which we do in 
subsection \ref{sub:interchange}, we first need to analyse $F_0T(\lambda+i0)F_0^*$. 

\subsection{Analysing the diagonalisation maps}
Recall that for $\lambda > 0$ the operator $\Gamma_0(\lambda)$ (see subsection \ref{subsec:ass-notes}) extends to an element
of $\mathcal{B}(H^{s,t}, \mathcal{P})$ for each  $s \in \R$ and $t > \frac12$, as well as $\lambda \mapsto \Gamma_0(\lambda) \in \mathcal{B}(H^{s,t}, \mathcal{P})$ is continuous.  

We need stronger continuity and boundedness results for $\Gamma_0$.

\begin{lemma}\label{lem:bounded}
Let $n \geq 4$,  $s \geq 0$ and $t > \frac{n}{2}$. Then, the functions $G_\pm: (0,\infty) \to \mathcal{B}(H^{s,t}, \mathcal{P})$ defined by $G_+(\lambda) = \lambda^{ \frac{1}{4}} \Gamma_0(\lambda)$ and $G_-(\lambda) = \lambda^{-\frac{1}{4}} \Gamma_0(\lambda)$ are continuous and bounded.
\end{lemma}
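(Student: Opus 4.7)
The plan is to treat continuity and boundedness separately, and within boundedness to split into the regimes $\lambda\to 0$ and $\lambda\to\infty$, which need different tools. Continuity of $G_\pm\colon (0,\infty)\to\mathcal{B}(H^{s,t},\mathcal{P})$ is essentially free: the continuity of $\lambda\mapsto\Gamma_0(\lambda)$ in this operator norm is already recorded in the paragraph preceding the lemma (it holds as soon as $t>\tfrac12$), and $\lambda\mapsto\lambda^{\pm 1/4}$ is continuous on $(0,\infty)$, so the product is continuous.

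For the low-energy regime I would apply Lemma \ref{lem:gammaexp}, which gives the expansion $\Gamma_0(\lambda)=2^{-1/2}\lambda^{(n-2)/4}\gamma_0+o(\lambda^{n/4})$ in $\mathcal{B}(H^{s,t},\mathcal{P})$ as $\lambda\to 0$. The hypothesis $t>n/2$ is needed exactly here, to ensure $\gamma_0\in\mathcal{B}(H^{s,t},\mathcal{P})$ via Lemma \ref{lem:gammabounded}. Multiplying by $\lambda^{\pm 1/4}$ gives $G_\pm(\lambda)=2^{-1/2}\lambda^{(n-2\pm 1)/4}\gamma_0+o(\lambda^{(n\pm 1)/4})$, and since $n\geq 4$ both exponents $(n-1)/4$ and $(n-3)/4$ are strictly positive, $G_\pm(\lambda)\to 0$ as $\lambda\to 0$. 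This delivers the uniform bound on every interval $(0,\lambda_0]$.

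For the high-energy regime the key input is the classical Agmon--Kato bound $\|R_0(\lambda\pm i0)\|_{\mathcal{B}(H^{0,t},H^{0,-t})}\leq C(1+\lambda)^{-1/2}$ for $t>\tfrac12$, valid uniformly in $\lambda>0$ (cf.\ \cite{yafaev10}). Combined with the Stone-type identity $\Gamma_0(\lambda)^*\Gamma_0(\lambda)=(2\pi i)^{-1}(R_0(\lambda+i0)-R_0(\lambda-i0))$ and the $H^{0,t}$--$H^{0,-t}$ duality pairing, this yields $\|\Gamma_0(\lambda)f\|_\mathcal{P}^2\leq C(1+\lambda)^{-1/2}\|f\|_{H^{0,t}}^2$. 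Using the continuous inclusion $H^{s,t}\hookrightarrow H^{0,t}$ for $s\geq 0$, I would then deduce $\|G_+(\lambda)\|\leq C\lambda^{1/4}(1+\lambda)^{-1/4}$ and $\|G_-(\lambda)\|\leq C\lambda^{-1/4}(1+\lambda)^{-1/4}$, each bounded on $[\lambda_0,\infty)$; gluing with the low-energy bound completes the proof.

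The hard part is marrying the weighted-Sobolev high-energy resolvent bound with the identification $\Gamma_0^*\Gamma_0=(R_0(\cdot+i0)-R_0(\cdot-i0))/(2\pi i)$ and tracking the $\lambda^{\pm 1/4}$ factors through the duality; the inequality is classical but needs careful bookkeeping. An elementary alternative, which bypasses the resolvent entirely, is to note the identity $\lambda^{1/2}\|\Gamma_0(\lambda)f\|_\mathcal{P}^2=\tfrac12\int_{|\xi|=\lambda^{1/2}}|\hat f|^2\,\d\sigma$ and to bound the surface integral $F(r)=\int_{|\xi|=r}|\hat f|^2\,\d\sigma$ uniformly in $r$ by showing $F,F'\in L^1(0,\infty)$ together with $F(0)=0$; the regularity of $\hat f$ required for this route is guaranteed by $f\in H^{0,t}$ with $t>n/2\geq 2$.
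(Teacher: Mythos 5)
Your proposal is correct and its overall architecture (continuity for free, then separate low- and high-energy bounds glued on a compact interval) matches the paper's. The low-energy part is identical: both arguments feed the expansion of $\Gamma_0(\lambda)$ from Lemma \ref{lem:gammaexp} (equivalently, the expansion of $\gamma(\lambda^{1/2})\F$ with $\gamma_0\in\mathcal{B}(H^{0,t},\mathcal{P})$ for $t>\frac{n}{2}$ from Lemma \ref{lem:gammabounded}) into the prefactor $\lambda^{\frac{n-2}{4}\pm\frac14}$ and use $n\geq 4$ to get positivity of the exponent; the paper economises slightly by only checking $G_-$ near $0$ and $G_+$ near $\infty$, since $G_+=\lambda^{1/2}G_-$. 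Where you genuinely diverge is the high-energy estimate. The paper quotes the trace-restriction inequality $r^{n-1}\int_{\Sf^{n-1}}|u(r\omega)|^2\,\d\omega\leq C\|u\|^2_{H^{t,0}}$ of \cite[Theorem 1.1.4]{yafaev10} and reads off $\|\Gamma_0(\lambda)\|_{\mathcal{B}(H^{0,t},\mathcal{P})}\leq C\lambda^{-1/4}$ directly, whereas you derive the same decay from the high-energy limiting absorption bound $\|R_0(\lambda\pm i0)\|_{\mathcal{B}(H^{0,t},H^{0,-t})}\lesssim\lambda^{-1/2}$ together with the Stone-type identity $\Gamma_0(\lambda)^*\Gamma_0(\lambda)=(2\pi i)^{-1}(R_0(\lambda+i0)-R_0(\lambda-i0))$; both are standard and yield $\|\Gamma_0(\lambda)f\|^2_{\mathcal{P}}\leq 2(2\pi)^{-1}\|R_0(\lambda+i0)\|\,\|f\|^2_{H^{0,t}}$, so your route is sound, though it imports the heavier machinery of the limiting absorption principle to prove what is ultimately an elementary Fourier restriction bound. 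Your ``elementary alternative'' via the surface integral $F(r)=\int_{|\xi|=r}|\hat f|^2\,\d\sigma$, with $F(0)=0$ and $F'\in L^1$ controlled by $\|f\|_{H^{0,t}}$, is in effect a proof of the very estimate the paper cites, so that variant converges back to the paper's argument. Two cosmetic points: your claim that the resolvent bound is uniform down to $\lambda=0$ is unnecessary (and would need $n\geq 3$ and $t>1$ to be literally true); and you need only $t>\frac12$ for the high-energy bound, the stronger $t>\frac{n}{2}$ being required solely at low energy, exactly as in the paper.
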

\begin{proof}
The continuity is immediate from the previous paragraph. For the boundedness, it is sufficient to check the boundedness of $G_-$ in a neighbourhood of $0$ and $G_+$ in a neighbourhood of $\infty$. For the first bound, we need the asymptotic development of $\gamma(\lambda^\frac12)$ as $\lambda \to 0$. We recall that for $f \in C_c^\infty(\R^n)$ we have
\begin{align*}
[\gamma(\lambda^\frac12) \Fi f](\omega) &= \sum_{j=0}^K{(i\lambda^\frac12)^j [\gamma_j f](\omega)}+O\left(\lambda^{\frac{K+1}{2}} \right).
\end{align*}

By Lemma \ref{lem:gammabounded} we have for $t > j+\frac{n}{2}$ that $\gamma_j \in \mathcal{B}(H^{0,t}, \mathcal{P})$. From this expansion we see that for $t > j+\frac{n}{2}$ and for any $\alpha \leq \frac{n-2}{4}$ we have
\begin{align}\label{eq:gammalim}
\lambda^{-\alpha} \norm{\Gamma_0(\lambda)} &= \lambda^{\frac{n-2}{4}-\alpha} 2^{-\frac12} \norm{\gamma(\lambda^\frac12)}
\end{align}
is bounded in a neighbourhood of zero. Taking $\alpha = \frac{1}{4}$ gives immediately the statement for $G_-$, since $n \geq 3$.  For the second statement, we refer to \cite[Theorem 1.1.4]{yafaev10} for the estimate, valid for $t > \frac12$, 
\begin{align}\label{est:yaf}
r^{n-1} \int_{\Sf^{n-1}}{|u(r\omega)|^2\, \d \omega} &\leq C \norm{u}^2_{H^{t,0}},
\end{align}
from which we obtain (with $r = \lambda^\frac12$ and $u = \Fi f$) that
\begin{align*}
\norm{\Gamma_0(\lambda)f}_{\mathcal{P}}^2 &= \int_{\Sf^{n-1}}{2^{-1} \lambda^{\frac{n-2}{2}} |[\Fi f](\lambda^\frac12 \omega)|^2\, \d \omega} 
= 2^{-1}\lambda^{-\frac{1}{2}} \int_{\Sf^{n-1}}{\lambda^{\frac{n-1}{2}} |[\Fi f](\lambda^\frac12 \omega)|^2\, \d \omega} \\
&\leq 2^{-1} \lambda^{-\frac12} C \norm{\Fi f}^2_{H^{t,0}} 
= 2^{-1} \lambda^{-\frac12} C \norm{f}^2_{H^{0,t}}.
\end{align*}
This gives us the estimate
\begin{align*}
\lambda^\frac{1}{4} \norm{\Gamma_0(\lambda)f} &\leq \tilde{C} \norm{f}_{H^{0,t}},
\end{align*}
so that $G_+$ is bounded as $\lambda \to \infty$. For $s \geq 0$, we use the inclusion $H^{s,t} \subset H^{0,t}$.
\end{proof}

From the arguments above we immediately obtain $\lambda \mapsto \norm{\Gamma_0(\lambda)}_{\mathcal{B}(H^{s,t},\mathcal{P})}$ is continuous and bounded for $s \geq 0$ and $t > \frac12$. We can further strengthen Lemma \ref{lem:bounded} for $G_-$.

\begin{lemma}\label{lem:limits}
Let $s \geq 0$ and $t > \frac{n}{2}$. Then $\Gamma_0(\lambda) \in \mathcal{K}(H^{s,t}, \mathcal{P})$ for any $\lambda \in \R^+$ and the function $G_-:\R^+ \to \mathcal{K}(H^{s,t}, \mathcal{P})$ is continuous and vanishes as $\lambda \to \infty$ and as $\lambda \to 0$.
\end{lemma}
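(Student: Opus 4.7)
The plan is to exploit the factorisation $G_-(\lambda)=2^{-1/2}\lambda^{(n-3)/4}\gamma(\lambda^{1/2})\mathcal{F}$ together with the estimates already obtained in Lemma \ref{lem:bounded}. Continuity of $G_-$ into $\mathcal{B}(H^{s,t},\mathcal{P})$ is already known, and since the compact operators are norm-closed in $\mathcal{B}(H^{s,t},\mathcal{P})$, once pointwise compactness of $G_-(\lambda)$ is established the continuity statement into $\mathcal{K}(H^{s,t},\mathcal{P})$ will follow automatically.

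For the pointwise compactness I would factor $\Gamma_0(\lambda):H^{s,t}\to\mathcal{P}$ through a Sobolev space on the sphere of strictly positive order. Since $\mathcal{F}$ is an isomorphism $H^{s,t}\to H^{t,s}$ and since $H^{t,s}\hookrightarrow H^t(\R^n)$ continuously when $s\geq 0$ (the weight $(1+|x|^2)^{s/2}\geq 1$ only tightens the norm), the classical trace theorem produces a bounded map $H^t(\R^n)\to H^{t-1/2}(\lambda^{1/2}\Sf^{n-1})$ whenever $t>1/2$, which is satisfied because $t>n/2\geq 2$. After rescaling to the unit sphere, the inclusion $H^{t-1/2}(\Sf^{n-1})\hookrightarrow L^2(\Sf^{n-1})=\mathcal{P}$ is compact by Rellich's theorem since $t-1/2>0$. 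Composing these maps shows $\Gamma_0(\lambda)$, and therefore $G_-(\lambda)$, is compact for every $\lambda\in\R^+$.

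For the behaviour at infinity, the boundedness of $G_+$ already established in Lemma \ref{lem:bounded} immediately gives
\[
\|G_-(\lambda)\|_{\mathcal{B}(H^{s,t},\mathcal{P})}=\lambda^{-1/2}\|G_+(\lambda)\|_{\mathcal{B}(H^{s,t},\mathcal{P})}\leq C\lambda^{-1/2}\to 0\qquad(\lambda\to\infty).
\]
For the behaviour at zero, the proof of Lemma \ref{lem:bounded} already established (via Equation \eqref{eq:gammalim} with $\alpha=(n-2)/4$, together with the uniform boundedness of $\gamma(\lambda^{1/2})\mathcal{F}$ near zero supplied by Lemma \ref{lem:gammabounded}) the bound $\|\Gamma_0(\lambda)\|\leq C\lambda^{(n-2)/4}$ in a neighbourhood of zero. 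Hence $\|G_-(\lambda)\|\leq C\lambda^{(n-3)/4}\to 0$ as $\lambda\to 0$, where the hypothesis $n\geq 4$ is used precisely to make the exponent strictly positive.

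The only real subtlety is checking that the classical trace theorem applies in the weighted setting, which is dealt with by the embedding $H^{t,s}\subset H^t$ for $s\geq 0$; once this is in place the remaining work is merely bookkeeping of exponents already implicit in Lemma \ref{lem:bounded}. I do not anticipate serious obstacles beyond this.
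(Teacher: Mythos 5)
Your proof is correct, and on the two limit statements it coincides with the paper's argument: the vanishing at infinity comes from the bound $\norm{\Gamma_0(\lambda)}_{\mathcal{B}(H^{0,t},\mathcal{P})}\leq C\lambda^{-1/4}$ of \eqref{est:yaf} (equivalently, boundedness of $G_+$, transferred to $H^{s,t}$ via the inclusion $H^{s,t}\subset H^{0,t}$), and the vanishing at zero from \eqref{eq:gammalim} together with the uniform boundedness of $\gamma(\lambda^{1/2})\F$ near zero, giving $\norm{G_-(\lambda)}=O(\lambda^{(n-3)/4})$, with $n\geq4$ needed to make the exponent positive exactly as you say. Where you genuinely diverge is the pointwise compactness of $\Gamma_0(\lambda)$: the paper factors $\Gamma_0(\lambda)$ through the compact embedding $H^{s,t}\subset H^{q,r}$ for $q<s$, $r<t$ (choosing $r>\frac12$ so that $\Gamma_0(\lambda)$ remains bounded on the larger space), citing \cite[Proposition 4.1.5]{amrein96}, whereas you run the chain $H^{s,t}\xrightarrow{\ \F\ }H^{t,s}\hookrightarrow H^{t}(\R^n)\to H^{t-\frac12}(\Sf^{n-1})\hookrightarrow L^2(\Sf^{n-1})$ using the classical trace theorem and Rellich's theorem on the compact manifold $\Sf^{n-1}$. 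Both routes are valid; yours makes the source of compactness explicit (restriction loses half a derivative, and positive Sobolev order on a compact manifold embeds compactly into $L^2$) and needs only $t>\frac12$ at that step, while the paper's is shorter because the boundedness of $\Gamma_0(\lambda)$ on the weaker weighted space is already on record. Two cosmetic remarks: the assertion $t>\frac{n}{2}\geq2$ presumes $n\geq4$ (harmless in context, and irrelevant since the trace theorem only requires $t>\frac12$), and your observation that norm continuity into $\mathcal{K}(H^{s,t},\mathcal{P})$ follows from norm continuity into $\mathcal{B}(H^{s,t},\mathcal{P})$ once the values are known to be compact is precisely what the paper leaves implicit.
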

\begin{proof}
The compactness statement follows from the compact embedding $H^{s,t} \subset H^{q,r}$ for any $q < s$, $r < t$. See \cite[Proposition 4.1.5]{amrein96}.

For the continuity, the same argument as the previous lemma works. That $G_-$ vanishes as $\lambda \to 0$ follows from Equation \eqref{eq:gammalim} and the fact that $\gamma(\lambda^\frac12)\F \to \gamma_0 \F$ in the norm of $\mathcal{B}(H^{s,t},\mathcal{P})$ as $\lambda \to 0$. It remains to check the limit as $\lambda \to \infty$. The estimate \eqref{est:yaf} of the previous lemma tells us the behaviour of $\Gamma_0(\lambda)$ as $\lambda \to \infty$ in the norm of $\mathcal{B}(H^{0,t})$, so we analyse an equivalent limit in this norm. Define the operator $P_s = (\textup{Id}+H_0)^{-\frac{s}{2}}$ and note that for $\alpha \geq 0$ we have
\begin{align*}
\lambda^{-\alpha} \Gamma_0(\lambda) P_s &= \lambda^{-\alpha} (1+\lambda)^{-\frac{s}{2}} \Gamma_0(\lambda)
\end{align*}
for each $\lambda \in \R^+$. Thus we find 
\begin{align*}
\lim_{\lambda \to \infty}{\norm{\lambda^{-\alpha} \Gamma_0(\lambda)}_{\mathcal{B}(H^{s,t})}} &= 0
\end{align*}
if and only if 
\begin{align*}
\lim_{\lambda \to \infty}{\norm{\lambda^{-\alpha}(1+\lambda)^{-\frac{s}{2}} \Gamma_0(\lambda)}_{\mathcal{B}(H^{0,t})}} &= 0.
\end{align*}
Hence the claim follows from the estimate \eqref{est:yaf} of the previous lemma as long as $-\alpha-\frac{s}{2} < \frac{1}{4}$, which is the same as $s > - \frac12-2\alpha$. Letting $\alpha = \frac{1}{4}$ gives the desired result.
\end{proof}
We now consider the multiplication operator $G_-: C_c(\R^+,H^{s,t}) \to \H_{spec}$ given by
\begin{align}\label{eq:G-minus}
[G_- f](\lambda,\omega) &:= [G_-(\lambda) f](\lambda,\omega) = \lambda^{-\frac{1}{4}} [\Gamma_0(\lambda) f](\lambda,\omega).
\end{align}
Lemmas \ref{lem:bounded} and \ref{lem:limits} show that the operator $G_-$ extends, for $s \geq 0$ and $t > \frac12$ to an element of $\mathcal{B}(L^2(\R^+, H^{s,t}), \H_{spec})$.

The next step is to deal with the limit as $\eps \to 0$ of $\vp_\eps(M-\lambda)$ from Definition \ref{defn:phi-eps}. %For this purpose we need the continuous extension of the inner product $\langle \cdot, \cdot \rangle $ to a duality $\langle \cdot, \cdot \rangle_{H^{s,t},H^{-s,-t}}$.

\begin{lemma}\label{lem:converge}
Fix $n \geq 4$. Take $s \geq 0$ and $t > \frac{n}{2}$. For $\lambda \in \R^+$ and $f \in C_c(\R^+, \mathcal{P})$ we have
\begin{align*}
\lim_{\eps \to 0}{\norm{F_0^* \vp_\eps(M-\lambda) f - \Gamma_0(\lambda)^*f(\lambda)}_{H^{-s,-t}}} &= 0.
\end{align*}
\end{lemma}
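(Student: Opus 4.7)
The plan is to recognise $\vp_\eps(M-\lambda)$ as multiplication by a Poisson kernel in the spectral variable and reduce the claim to the standard convergence property for an approximate identity, after rewriting $F_0^*$ as an integral against $\Gamma_0(\mu)^*$. Precisely, since $M$ is multiplication by $\mu$ on $\H_{spec}$, we have
\[
[\vp_\eps(M-\lambda) f](\mu,\omega) = P_\eps(\mu - \lambda) f(\mu,\omega), \qquad P_\eps(r) := \frac{\eps/\pi}{r^2 + \eps^2},
\]
and the definition of $F_0$ gives, for any $g \in C_c(\R^+,\mathcal{P})$, the identity
\[
F_0^* g = \int_0^\infty \Gamma_0(\mu)^* g(\mu)\, \d \mu,
\]
interpreted as a Bochner integral in $H^{-s,-t}$. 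The integrand $h(\mu) := \Gamma_0(\mu)^* f(\mu)$ is continuous and compactly supported in $(0,\infty)$ as an $H^{-s,-t}$-valued function: continuity of $\mu \mapsto \Gamma_0(\mu) \in \mathcal{B}(H^{s,t},\mathcal{P})$ (from the discussion preceding Lemma~\ref{lem:bounded}) passes to the adjoints by duality, and $f$ is assumed continuous and compactly supported in $\mathcal{P}$.

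Next I would apply these two observations to rewrite
\[
F_0^* \vp_\eps(M-\lambda) f = \int_0^\infty P_\eps(\mu-\lambda)\, \Gamma_0(\mu)^* f(\mu)\, \d \mu = \int_0^\infty P_\eps(\mu-\lambda)\, h(\mu)\, \d\mu,
\]
and then decompose the difference with $\Gamma_0(\lambda)^* f(\lambda) = h(\lambda)$ into
\[
\int_0^\infty P_\eps(\mu-\lambda)\bigl[h(\mu) - h(\lambda)\bigr]\, \d\mu \;+\; \left(\int_0^\infty P_\eps(\mu-\lambda)\, \d\mu - 1\right) h(\lambda).
\]
The scalar in the second piece equals $\tfrac12 + \tfrac{1}{\pi}\arctan(\lambda/\eps)$, which converges to $1$ as $\eps \to 0$ because $\lambda > 0$; hence this term vanishes in $H^{-s,-t}$.

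For the first piece I would use the classical approximate-identity argument in the Banach space $H^{-s,-t}$: given $\eta > 0$, pick $\delta > 0$ so that $\norm{h(\mu) - h(\lambda)}_{H^{-s,-t}} < \eta$ for $|\mu - \lambda| < \delta$ (possible by continuity of $h$), and split the integral into $\{|\mu-\lambda| < \delta\}$ and $\{|\mu-\lambda| \geq \delta\}$. The first sub-integral is bounded by $\eta \int_\R P_\eps(\mu-\lambda)\, \d\mu = \eta$; the second is bounded by $\tfrac{\eps}{\pi\delta^2}\, \norm{h}_\infty \cdot |\supp h|$ (using that $h$ is compactly supported and bounded on that support, which follows from the boundedness of $\norm{\Gamma_0(\mu)}_{\mathcal{B}(H^{s,t},\mathcal{P})}$ on compacts), and this vanishes as $\eps \to 0$ with $\delta$ fixed. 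Sending $\eps \to 0$ and then $\eta \to 0$ gives the claim.

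There is no real obstacle here; the only point requiring attention is the justification that $F_0^*$ may be expressed as the Bochner integral above on $C_c(\R^+,\mathcal{P})$ and that $h$ is continuous in the Sobolev norm of $H^{-s,-t}$, both of which follow from the mapping properties of $\Gamma_0(\mu)$ recorded in subsection \ref{subsec:ass-notes} and Lemma~\ref{lem:bounded}. The strict positivity of $\lambda$ is essential: it is what makes the $\R^+$-truncated Poisson kernel an approximate identity at $\lambda$ rather than a boundary delta.
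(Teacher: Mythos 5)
Your argument is correct and is essentially the proof the paper relies on: the paper omits its own proof of this lemma, deferring to \cite[Lemma~2.3]{richard13}, and that proof is exactly this Poisson-kernel/approximate-identity computation based on writing $F_0^*g=\int_0^\infty\Gamma_0(\mu)^*g(\mu)\,\d\mu$ and using the norm continuity of $\mu\mapsto\Gamma_0(\mu)^*f(\mu)$ in $H^{-s,-t}$ together with $\lambda>0$. The only (easily repaired) imprecision is in your far-region estimate: the contribution $\norm{h(\lambda)}\int_{\{|\mu-\lambda|\ge\delta\}\cap\R^+}P_\eps(\mu-\lambda)\,\d\mu$ is not controlled by $|\supp h|$ since $h(\lambda)$ is a constant, but it vanishes anyway because the mass of $P_\eps$ outside $(-\delta,\delta)$ equals $1-\tfrac{2}{\pi}\arctan(\delta/\eps)\to 0$ as $\eps\to 0$.
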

We omit the proof, since it is identical to that of \cite[Lemma 2.3]{richard13}.

From now on we only consider the operator $W_-$ since our resolvent results have only been in the upper half plane. Similar results could be derived in the lower half plane, however the operator $W_+$ can be obtained in a much simpler manner from the relation $W_+ = W_-S^*$ at a later stage. 

\begin{lemma}
\label{lem:mult}
Fix $n \geq 4$. For $z \notin \sigma(H_0)$ let $T(z) = V(\textup{Id}+R_0(z)V)^{-1}$. Suppose that  $\rho, t$ and $V$ satisfy Assumption \ref{ass:best-ass}, and that there are no resonances for $n = 4$.
%If $n = 3$, take $\rho > 5$ and $t \in (\frac{5}{2}, \rho - \frac{5}{2})$. If $n = 5$, take $\rho > 7$ and $\frac{7}{2} < t < \rho - \frac{7}{2}$. If $n = 7,8,9,10$, take $\rho > 6$ and $6 - \frac{n}{2} < t < \rho - (6-\frac{n}{2})$. If $n \geq 11$ is odd, take $\rho > \frac{n+1}{2}$ and $\frac12 < t < \rho - \frac12$. If $n = 6$, let $\rho > 6$ and $3 < t < \rho - 3$. If $n \geq 12$ is even, let $\rho > \frac{n+1}{2}$ and $\frac12 < t < \rho - \frac12$. 
Then the function $B:\R^+ \to \mathcal{B}(\mathcal{P},H^{0, \rho-t})$ defined by 
\begin{align*}
B(\lambda) &= \lambda^{\frac{1}{4}} T(\lambda + i0) \Gamma_0(\lambda)^*
\end{align*}
is continuous and bounded, and the multiplication operator $B:C_c(\R^+,\mathcal{P}) \to L^2(\R^+, H^{0,\rho-t})$ given by
\begin{align*}
[B f](\lambda) &= B(\lambda)f(\lambda)
\end{align*}
extends to an element of $\mathcal{B}(\H_{spec}, L^2(\R^+,H^{0,\rho-t}))$.
\end{lemma}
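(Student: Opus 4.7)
The plan is to establish three things in turn: (a) for each fixed $\lambda>0$ the operator $B(\lambda)$ is a well-defined bounded map $\mathcal{P}\to H^{0,\rho-t}$; (b) $\lambda\mapsto B(\lambda)$ is continuous in operator norm; (c) $\sup_{\lambda>0}\|B(\lambda)\|<\infty$. Once (a)--(c) are in hand, the multiplication operator $B$ extends by density from $C_c(\R^+,\mathcal{P})$ to a bounded operator $L^2(\R^+,\mathcal{P})\to L^2(\R^+,H^{0,\rho-t})$ with norm at most $\sup_\lambda\|B(\lambda)\|$, which is exactly the claim.

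For (a) and (b) I chase the factorisation $T(\lambda+i0)\Gamma_0(\lambda)^* = V(\textup{Id}+R_0(\lambda+i0)V)^{-1}\Gamma_0(\lambda)^*$. The adjoint $\Gamma_0(\lambda)^*$ lies in $\mathcal{B}(\mathcal{P},H^{0,-t})$ by duality from Lemma \ref{lem:bounded}, the inverse $(\textup{Id}+R_0(\lambda+i0)V)^{-1}$ preserves $H^{0,-t}$ by the limiting absorption principle together with Lemma \ref{lem:resolvents}, and multiplication by $V$ gains $\rho$ powers of decay, landing in $H^{0,\rho-t}$. Continuity in $\lambda$ then follows by combining the norm-continuity of $\lambda\mapsto R_0(\lambda+i0)$ on $\R^+$ (LAP) with the norm-continuity of $\lambda\mapsto\Gamma_0(\lambda)$ established in Lemma \ref{lem:limits}.

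The heart of the proof is the uniform bound (c), for which I split the half-line as $(0,\infty)=(0,1]\cup[1,\infty)$. On $[1,\infty)$ Lemma \ref{lem:resolvents} gives uniform boundedness of $(\textup{Id}+R_0(\lambda+i0)V)^{-1}$ in $\mathcal{B}(H^{0,-t})$, while Lemma \ref{lem:bounded} gives $\|\Gamma_0(\lambda)^*\|=O(\lambda^{-1/4})$ as $\lambda\to\infty$; these precisely cancel the $\lambda^{1/4}$ prefactor, yielding an $O(1)$ bound. On $(0,1]$ I invoke the resolvent expansions: Lemma \ref{lem:res4d} (with no resonance, so the $(a-\ln(z))^{-1}\langle\cdot,V\psi\rangle V\psi$ term vanishes) for $n=4$, and Lemma \ref{lem:resexpbig} for $n\geq 5$. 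In both cases the expansion reads $T(\lambda+i0)=\lambda^{-1}VP_0V+R(\lambda)$ where $R(\lambda)$ is $O(\ln\lambda)$ (n=4) or $o(1)$ (n\geq 5). The benign remainder $R(\lambda)$ combined with $\|\Gamma_0(\lambda)^*\|=O(\lambda^{(n-2)/4})$ and the $\lambda^{1/4}$ prefactor produces decay at least $\lambda^{3/4}\ln\lambda$, which vanishes at zero.

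The main obstacle is the apparent $\lambda^{-1}$ pole from the kernel projection term $\lambda^{-1}VP_0V$. Naively this would give $\lambda^{-3/4}VP_0V\Gamma_0(\lambda)^*$, unbounded as $\lambda\to 0$. The key cancellation, already exploited in the proof of Theorem \ref{thm:scatmatzero}, is that $VP_0V\cdot\mathcal{F}^*\gamma_0^*=0$: if $\psi\in\ker H$ then $V\psi=\Delta\psi$, so $\int V\psi\,\d x=0$ by integration by parts (legitimate under Assumption \ref{ass:best-ass}, which forces sufficient decay of kernel elements), and therefore $P_0V$ annihilates constant functions, which is precisely the leading term of $\mathcal{F}^*\gamma(\lambda^{1/2})^*$ as $\lambda\to 0$. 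Using the next order in the expansion $\Gamma_0(\lambda)^*=2^{-1/2}\lambda^{(n-2)/4}(\mathcal{F}^*\gamma_0^*+i\lambda^{1/2}\mathcal{F}^*\gamma_1^*+\cdots)$ together with this orthogonality, the $VP_0V$ contribution is seen to be $O(\lambda^{n/4})$, so the full pole contribution to $B(\lambda)$ is $O(\lambda^{(n-3)/4})=O(\lambda^{1/4})$ for $n\geq 4$, which is bounded (and in fact vanishes) at $\lambda=0$.
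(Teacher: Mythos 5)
Your overall architecture matches the paper's: continuity via the limiting absorption principle, splitting $(0,\infty)$ at $\lambda=1$, combining Lemma \ref{lem:resolvents} with the $O(\lambda^{-1/4})$ bound on $\Gamma_0(\lambda)^*$ from Lemma \ref{lem:bounded} for large $\lambda$, and the resolvent expansions of Lemmas \ref{lem:res4d} and \ref{lem:resexpbig} for small $\lambda$; your treatment of the remainder terms and of the resonance term in dimension $4$ is also in line with the paper. The flaw is in your handling of the pole term $\lambda^{-1}VP_0V$. You assert that $P_0V$ annihilates constants for every $n\geq 4$, on the grounds that $\psi\in\ker H$ gives $V\psi=\Delta\psi$ and hence $\int_{\R^n} V\psi\,\d x=0$ by integration by parts, ``legitimate under Assumption \ref{ass:best-ass}''. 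This is false for $n\geq 5$: the decay hypothesis constrains $V$, not $\psi$, and a zero-energy eigenfunction generically behaves like $c\,|x|^{2-n}$ at infinity with $c$ proportional to $\int V\psi\,\d x$. For $n\geq 5$ such a function is already square-integrable, so nothing forces $c=0$; the boundary term $\int_{|x|=R}\partial_r\psi\,\d S=O(1)$ does not vanish and the integration by parts fails. The paper explicitly flags this, citing \cite[Remark 8.3]{jensen80}: the relation $P_0V1=0$ holds for $n\le 4$ (for $n=4$ it is \cite[Lemma 3.3]{jensen84}, ultimately because $|x|^{-2}\notin L^2(\R^4)$) but not in higher dimensions.

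Fortunately your conclusion survives without the cancellation. For $n\geq 5$ the naive power count already gives
\begin{align*}
\lambda^{\frac14}\cdot\lambda^{-1}\cdot\lambda^{\frac{n-2}{4}}\;=\;\lambda^{\frac{n-5}{4}},
\end{align*}
which is bounded as $\lambda\to 0$ (and vanishes for $n\geq 6$), so no orthogonality is needed there; this is precisely how the paper argues. The cancellation $P_0V\gamma_0^*=0$ is genuinely required only in dimension $4$, where the exponent is $-\tfrac14$, and there it is available from \cite[Lemma 3.3]{jensen84}. Your proof is repaired by deleting the claimed integration-by-parts identity for $n\geq 5$ and invoking it, with the correct citation, only for $n=4$.
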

\begin{proof}
The continuity of $B$ follows from the limiting absorption principle \cite[Theorem 5.2.7]{kuroda78}. For boundedness, it will be sufficient to show that the map $\lambda \mapsto \norm{B(\lambda)}_{\mathcal{B}(\mathcal{P},H^{0,\rho-t})}$ is bounded in a neighbourhood of $0$ and in a neighbourhood of $\infty$.

For $\lambda \geq 1$, Lemma \ref{lem:resolvents} shows that the function $\lambda \mapsto \norm{T(\lambda + i0)}_{\mathcal{B}(H^{0,-t}, H^{0, \rho - t})}$ is bounded provided $\rho > \frac{n+1}{2}$, which is guaranteed by our assumptions. We also know from Lemma \ref{lem:bounded} that the function $\R^+ \ni \lambda \mapsto \lambda^\frac{1}{4} \norm{\Gamma_0(\lambda)^*}_{\mathcal{B}(\mathcal{P}, H^{0,-t})}$ is bounded.  This shows that $\lambda \mapsto \norm{B(\lambda)}_{\mathcal{B}(\mathcal{P},H^{0,\rho-t})}$ is bounded in a neighbourhood of $\infty$.

For $\lambda$ in a neighbourhood of $0$, we use asymptotic developments for $T(\lambda+i0)$ and $\Gamma_0(\lambda)^*$. The asymptotics of $\Gamma_0(\lambda)$ are discussed in Lemma \ref{lem:gammaexp}. By taking adjoints in our Lemma \ref{lem:gammaexp}, we see that for each $s \geq 0$ we have the expansion
\begin{align*}
\Gamma_0(\lambda)^* &= 2^{-\frac12} \lambda^{\frac{n-2}{4}} (\gamma_0^* -i \lambda^\frac12 \gamma_1^* + O(\lambda)^\frac12)
\end{align*}
in $\mathcal{B}(\mathcal{P}, H^{s,-t})$ as $\lambda \to 0$.

The form of the asymptotic development of $T(\lambda + i0)$ as $\lambda \to 0$ depends heavily on the (non)existence of zero energy eigenvalues and zero energy resonances: the specific statements we use are summarised in Section \ref{sec:resolvents}. 

%{\bf
%In the case $n = 3$, the most singular behaviour \cite[Lemma 4.5]{jensen79} is given by 
%%the following \cite[Lemma 4.5]{jensen79}. The operator $T(\lambda + i0)$ has the expansion
%\begin{align*}
%T(\lambda + i 0) &= \lambda^{-1} VP_0 V - i \lambda^{-\frac12} C +O(1)
%\end{align*}
%in $\mathcal{B}(H^{1,-t}(\R^3), H^{0,t}(\R^3))$ as $\lambda \to 0$, with $P_0$ the orthogonal projection onto $\ker(H)$ and $C \in \mathcal{B}(H^{1,-t}(\R^3), H^{0,t}(\R^3))$. Using these expressions for $\Gamma_0(\lambda)^*$ and $T(\lambda+i0)$, one can write $\lambda^{\frac{1}{4}} T(\lambda+i0) \Gamma_0(\lambda)^*$ as a sum of terms bounded in $\mathcal{B}(\mathcal{P}, H^{0,\rho - t})$ as $\lambda \to 0$ and a term $2^{-\frac12} \lambda^{-\frac{1}{2}} VP_0V \gamma_0^*$, which appears unbounded. However the proof of \cite[Theorem 5.4]{jensen79} shows that $P_0 V\gamma_0^* = 0$. Thus all the terms in the asymptotic development of $\lambda^\frac{1}{4} T(\lambda+i0) \Gamma_0(\lambda)^*$ are bounded in $\mathcal{B}(\mathcal{P}, H^{0,\rho - t)}(\R^3))$ TYPO??. The statements regarding $B$ thus follow immediately in the case $n = 3$.}

For $n \geq 5$ the behaviour as $\lambda \to 0$ of $T(\lambda+i0)$ is described explicitly in Lemma \ref{lem:resexpbig} as
\begin{align*}
T(\lambda+i0) &= \lambda^{-1} VP_0 V + o(1)
\end{align*}
in $\mathcal{B}(H^{1,-t},H^{0,-t})$ as $\lambda \to 0$. The lowest order power of $\lambda$ in the expansion of $\lambda^\frac{1}{4}T(\lambda+i0)\Gamma_0(\lambda)^*$ is thus the term $-\lambda^{\frac{n-5}{4}}\gamma_0 V P_0 V \gamma_0^*$. So we see that $\lambda^\frac{1}{4}T(\lambda+i0)\Gamma_0(\lambda)^*$ is bounded in $\mathcal{B}(\mathcal{P},H^{0,\rho-t})$ as $\lambda \to 0$ for $n \geq 5$.
%Unlike in dimension $n \leq 4$ we do not have the relation $P_0 V 1 = 0$ \cite[Remark 8.3]{jensen80}, however we still have enough decay to control the function $B$ near zero. 

We now consider $n = 4$. For the asymptotic development of $T(\lambda+i0)$, we appeal to Lemma \ref{lem:res4d}. We obtain (with $\psi$ the normalised zero energy resonance of Definition \ref{defn:res4d} and $a$ the constant defined by Equation \eqref{eq:constantdefn}) the expansion
\begin{align*}
T(\lambda+i0) &= \lambda^{-1} VP_0 V +\lambda^{-1}\left(a-\ln(\lambda)-i\frac{\pi}{2}\right)^{-1} \langle V\psi,\cdot \rangle V\psi - \ln(\lambda) C_1 +O(1)
\end{align*}
as $\lambda \to 0$ in $\mathcal{B}(H^{1,-t}, H^{0,t})$. Hence we find as $\lambda \to 0$ that
\begin{align*}
&\lambda^\frac{1}{4}T(\lambda+i0) \Gamma_0(\lambda)^* \\
&= 2^{-\frac12} \lambda^{\frac{3}{4}} \left(\lambda^{-1} VP_0 V +\lambda^{-1}\left(a-\ln(\lambda)-i\frac{\pi}{2}\right)^{-1} \langle  V\psi,\cdot \rangle V\psi - \ln(\lambda) C_1 +O(1)\right) \\
&\times \left(\gamma_0^*-i\lambda^\frac12 \gamma_1^*+o(\lambda^\frac12)\right) \\
&= 2^{-\frac12} \left(\lambda^{-\frac{1}{4}} VP_0 V \gamma_0^*+\lambda^{-\frac{1}{4}}\left(a-\ln(\lambda)-i\frac{\pi}{2}\right)^{-1}\langle V\psi,\cdot \rangle V\psi \gamma_0^* + o(\lambda^\frac{1}{4} \ln(\lambda))\right).
\end{align*}
There are two terms of concern here as $\lambda \to 0$. The first is the term $\lambda^{-\frac{1}{4}} VP_0 V \gamma_0^*$, which vanishes by \cite[Lemma 3.3]{jensen84}. The second is the term containing $\psi$, which blows up as $\lambda \to 0$. Thus we must make the assumption $\psi = 0$, that is there are no resonances.
\end{proof}

The case $n = 2$ has been resolved in \cite{richard13ii} in the non-resonant case and \cite{richard21} in the case of $s$-resonances but not $p$-resonances. The presence of certain resonances in dimensions $n = 2$ and $n = 4$ is well-known to produce complicated behaviour in scattering theory, see for instance \cite[Theorem III.3]{guillope81} and \cite[Theorem 6.3]{bolle85}, and so it is not surprising that we must exclude them in our analysis for dimension $n=4$, as is done in \cite{richard21} for dimension $n=2$.

We also note the following essential corollary of Lemmas \ref{lem:limits} and \ref{lem:mult}.

\begin{cor}
Suppose that $V$, $\rho$ and $t$ satisfy Assumption \ref{ass:best-ass}. Then for $n \geq 2$ we have the equality $S(\infty) = \textup{Id}$ in $\mathcal{B}(\mathcal{H})$.
\end{cor}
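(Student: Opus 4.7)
The plan is to invoke the stationary representation of the scattering matrix from Theorem \ref{thm: stationary scattering operator}, namely
\[
S(\lambda) = \textup{Id} - 2\pi i\, \Gamma_0(\lambda) T(\lambda + i0) \Gamma_0(\lambda)^*,
\]
and to show that the second term vanishes in the operator norm on $\mathcal{P}$ as $\lambda \to \infty$. Once this is done, the statement $S(\infty) = \textup{Id}$ is immediate.

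The main idea is to split the unbounded $\lambda^{(n-2)/2}$ prefactor coming from $\Gamma_0(\lambda) \Gamma_0(\lambda)^*$ symmetrically between the two halves, writing
\[
\Gamma_0(\lambda) T(\lambda + i0) \Gamma_0(\lambda)^* \;=\; G_-(\lambda) \circ B(\lambda),
\]
where $G_-(\lambda) = \lambda^{-1/4}\Gamma_0(\lambda)$ is the operator from Lemma \ref{lem:limits} and $B(\lambda) = \lambda^{1/4} T(\lambda + i0) \Gamma_0(\lambda)^*$ is the operator from Lemma \ref{lem:mult}. The point of this factorisation is that each factor individually has been controlled already: $B(\lambda)$ is uniformly bounded in $\lambda$ as a map $\mathcal{P} \to H^{0, \rho - t}$ by Lemma \ref{lem:mult}, while $G_-(\lambda)$ extends to an element of $\mathcal{K}(H^{s, t'}, \mathcal{P})$ for any $s \geq 0$ and $t' > n/2$, with norm tending to zero as $\lambda \to \infty$ by Lemma \ref{lem:limits}.

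The one delicate point is to make sure the Sobolev exponents line up so that the composition actually makes sense. Specifically, the output space $H^{0, \rho - t}$ of $B(\lambda)$ must embed into a valid domain $H^{s, t'}$ of $G_-(\lambda)$, which forces the choice $s = 0$ and $t' = \rho - t$, and hence the requirement $\rho - t > n/2$. This is precisely the upper endpoint in Assumption \ref{ass:best-ass}, so our standing hypothesis ensures that the composition is well-defined in $\mathcal{B}(\mathcal{P})$.

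With the composition legitimate, one estimates
\[
\|\Gamma_0(\lambda) T(\lambda + i0) \Gamma_0(\lambda)^*\|_{\mathcal{B}(\mathcal{P})} \;\leq\; \|G_-(\lambda)\|_{\mathcal{B}(H^{0, \rho - t}, \mathcal{P})} \cdot \|B(\lambda)\|_{\mathcal{B}(\mathcal{P}, H^{0, \rho - t})},
\]
and the right-hand side tends to zero as $\lambda \to \infty$ since the first factor vanishes and the second is uniformly bounded. The dimensions $n = 2, 3$ are handled by the analogous results already established in the cited literature \cite{richard13, richard13ii, richard21}. The only real obstacle is the bookkeeping of Sobolev indices, and Assumption \ref{ass:best-ass} is calibrated exactly so that it goes through.
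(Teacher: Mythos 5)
Your proof is correct and follows essentially the same route as the paper: the paper's own argument for $n\geq 4$ is precisely the factorisation $S(\lambda)-\textup{Id} = -2\pi i\, G_-(\lambda)B(\lambda)$ with $G_-$ vanishing at infinity (Lemma \ref{lem:limits}) and $B$ uniformly bounded (Lemma \ref{lem:mult}), with dimensions $n=2,3$ deferred to the cited literature. Your explicit check that $\rho - t > \frac{n}{2}$ makes the composition of Sobolev mapping properties legitimate is a detail the paper leaves implicit, but it is the same proof.
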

\begin{proof}
For dimension $n \geq 4$, we note that the operators $G_-(\lambda)$ and $B(\lambda)$ satisfy $G_-(\lambda)B = S(\lambda)-\textup{Id}$ and thus, since $G_-$ vanishes as $\lambda \to \infty$ and $B$ is bounded we have the desired result. In dimension $n = 3$, a combination of \cite[Lemmas 2.2 and 2.4]{richard13} in an analogous fashion shows that $S(\infty) = \textup{Id}$. In dimension $n=2$, the statement is an immediate corollary of \cite[Theorem 13]{richard21}.
\end{proof}

\subsection{Interchanging limits}
\label{sub:interchange}

In this subsection we interchange the integral over $\lambda$ and the limit as $\eps \to 0$ in Equation \eqref{eq:long}.

For fixed $\eps, \lambda > 0$ and $s \in \R$ we define for $f,g  \in C_c^\infty(\R^+,\mathcal{P})$ the following functions
\begin{align}\label{eq:func defn}
h_\eps(\lambda) &:= \lambda^\frac{1}{4} T(\lambda+i\eps)F_0^* \vp_\eps(M-\lambda) f \in H^{s,0},\quad \textup{ and } \\
p(\lambda) &:= \lambda^{-\frac{1}{4}} \Gamma_0(\lambda)^* g(\lambda) \in H^{-s,0}. \nonumber
\end{align}

We also denote by 
\begin{equation}
q_\lambda(\nu) = \begin{cases}\left(\frac{\nu+\lambda}{\lambda} \right)^\frac{1}{4} p(\nu+\lambda) &\nu>-\lambda\\
0&\nu\leq -\lambda\end{cases}
\label{eq:kew}
\end{equation}
the extension by zero of the function $(-\lambda,\infty) \ni \nu \mapsto \left(\frac{\nu+\lambda}{\lambda} \right)^\frac{1}{4} p(\nu+\lambda) \in H^{0,-s}$ to all of $\R$.

\begin{lemma}
\label{lem:part1}
Let $n \geq 4$ and suppose that $\rho, t$ satisfy Assumption \ref{ass:best-ass}, and that there are no resonances for $n = 4$. If $|V(x)| \leq C(1+|x|)^{-\rho}$ then one has the equality
\begin{align*}
\langle F_0 (W_--\textup{Id} ) F_0^* f, g \rangle_{\H_{spec}} &= -i \int_{\R^+}{ \left\langle h_0(\lambda), \int_0^\infty{\e^{i\nu z} q_\lambda(\nu)\, \d \nu}\right\rangle_{H^{0,s},H^{0,-s}}\, \d \lambda}.
\end{align*}
\end{lemma}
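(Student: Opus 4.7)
The plan is to start from Equation \eqref{eq:long} specialised to $W_-$ (the $\mp \mapsto +$ case) and rearrange it via the explicit form of $F_0$, a change of variables, a Fourier representation of the boundary value $(\nu-i\eps)^{-1}$, and a final $\eps\to 0$ limit. Throughout one restricts attention to $f,g \in C_c^\infty(\R^+,\mathcal{P})$ so that the arising integrals are supported on compact subsets of $\R^+$ in both the spectral and energy variables, and the low-energy analysis of the previous subsection controls the integrand as $\lambda\to 0$.

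First, unfold $F_0$: since $[F_0 X](\mu) = \Gamma_0(\mu)X$, move the factor $\Gamma_0(\mu)$ from the left slot of the $\mathcal{P}$-pairing to the right slot as $\Gamma_0(\mu)^*$, so that the resolvent denominator $(\mu-\lambda+i\eps)^{-1}$ appears as a scalar in front. Inserting the trivial multiplier $\lambda^{1/4}\lambda^{-1/4}\mu^{1/4}\mu^{-1/4}$ and matching exponents with \eqref{eq:func defn}, one recognises $h_\eps(\lambda)$ and $p(\mu)$ in the two slots with a residual weight $(\mu/\lambda)^{1/4}$. Second, perform the change of variables $\nu = \mu-\lambda$; the extension-by-zero definition \eqref{eq:kew} of $q_\lambda$ absorbs $(\mu/\lambda)^{1/4}$ and turns the inner $\mu$-integral on $(0,\infty)$ into an integral of $(\nu-i\eps)^{-1}\langle h_\eps(\lambda),q_\lambda(\nu)\rangle$ over all of $\R$.

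Third, use the elementary distributional identity $(\nu-i\eps)^{-1} = i\int_0^\infty \e^{i\nu z}\e^{-\eps z}\,\d z$, valid for $\eps>0$. The integrand is absolutely integrable in $(\nu,z)$ for fixed $\lambda$ because $\|q_\lambda(\cdot)\|_{H^{0,-s}} \in L^2(\R)$ by the boundedness assertion of Lemma \ref{lem:bounded} and the compact support of $g$, while $h_\eps(\lambda)\in H^{0,s}$ is controlled by Lemma \ref{lem:mult}. Fubini then rearranges the double integral into the iterated form appearing in the conclusion, producing the explicit prefactor $-i$ from the combination of the leading minus sign in \eqref{eq:long} with the $+i$ from the Fourier representation.

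Fourth, and the main step, pass to the limit $\eps\to 0$. Lemma \ref{lem:converge} gives $F_0^*\vp_\eps(M-\lambda)f \to \Gamma_0(\lambda)^*f(\lambda)$ in $H^{-s,-t}$; combined with continuity of $T(\lambda+i\eps) \to T(\lambda+i0)$ in the relevant Sobolev norm from the limiting absorption principle and Lemma \ref{lem:mult}, this yields $h_\eps(\lambda) \to h_0(\lambda)$ in $H^{0,s}$ pointwise in $\lambda$. The hard part is constructing a $(\lambda,z)$-integrable majorant independent of $\eps$ so that dominated convergence applies: one must combine the uniform bounds of Lemmas \ref{lem:bounded}, \ref{lem:converge} and \ref{lem:mult} with the compactness of the supports of $f$ and $g$, which restricts $\lambda$ to a compact subset of $\R^+$. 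The most delicate aspect is behaviour as $\lambda\to 0$: the factor $\lambda^{1/4}$ built into $h_\eps$ must absorb the singularities of $T(\lambda+i0)$ described in Lemmas \ref{lem:res4d} and \ref{lem:resexpbig}, which is precisely where the no-resonance hypothesis for $n=4$ enters, mirroring its role in Lemma \ref{lem:mult}.
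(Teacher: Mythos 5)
Your outline follows the same route as the paper's proof (recognise $h_\eps$, $p$ and $q_\lambda$, represent the resolvent denominator as a Laplace--Fourier integral, apply Fubini, then pass to the limit $\eps\to 0$ by dominated convergence), but the one step you flag as ``the hard part'' is exactly the step you do not carry out, and the justification you sketch for it does not work. You claim the integrand is absolutely integrable in $(\nu,z)$ because $\nu\mapsto\Vert q_\lambda(\nu)\Vert_{H^{0,-m}}$ is in $L^2$ and $h_\eps(\lambda)$ is bounded; but $|\e^{i\nu z}|=1$, so $\int_0^\infty\int_\R\Vert q_\lambda(\nu)\Vert\,\d\nu\,\d z$ diverges, and Plancherel only gives $z\mapsto\Vert[\Fi^*q_\lambda](z)\Vert_{H^{0,-m}}\in L^2(\R)$, which paired against the $z$-independent bound $\Vert h_\eps(\lambda)\Vert_{H^{0,m}}$ is \emph{not} an integrable majorant on $(0,\infty)$. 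The decay in $z$ has to be manufactured from smoothness of $q_\lambda$ in $\nu$: the paper writes $\Vert[\Fi^*q_\lambda](z)\Vert_{H^{0,-m}}=(1+z^2)^{-1}\Vert[\Fi^*(\textup{Id}+D^2)q_\lambda](z)\Vert_{H^{0,-m}}$, differentiates the explicit kernel of $\Gamma_0(\cdot)^*$ twice (for simple tensors $g=\eta\otimes\xi$), obtains the pointwise bound $|[\Fi^*(\textup{Id}+D^2)q_\lambda](z)(x)|\leq C(1+|x|^2)$, and then needs $(1+|x|^2)\in H^{0,-m}$, which forces $m=\rho-t>n+2$. This is where the decay hypothesis $\rho>n+t+2$ of Assumption \ref{ass:best-ass} actually enters the lemma, and none of the uniform bounds in Lemmas \ref{lem:bounded}, \ref{lem:converge} or \ref{lem:mult} that you invoke can substitute for it. Note also that the interchange being justified is between $\lim_{\eps\to 0}$ and the $z$-integral for each \emph{fixed} $\lambda$ (the limit already sits inside the $\lambda$-integral in Equation \eqref{eq:long}), so compactness of the supports of $f,g$ in $\lambda$ is not the relevant control, and the role of the no-resonance hypothesis here is only to guarantee $h_\eps(\lambda)\to h_0(\lambda)$ boundedly via Lemmas \ref{lem:converge} and \ref{lem:mult}.

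Two smaller points. The Fourier identity you quote is for the wrong boundary value: the denominator arising for $W_-$ is $(\mu-\lambda+i\eps)^{-1}=(\nu+i\eps)^{-1}=-i\int_0^\infty\e^{i\nu z}\e^{-\eps z}\,\d z$, not $(\nu-i\eps)^{-1}=i\int_0^\infty\e^{-i\nu z}\,\e^{-\eps z}\,\d z$; with your version the half-line over which the $z$-integral runs (equivalently, which of $W_\pm$ you are computing) comes out wrong. And Fubini at fixed $\eps>0$ is legitimate precisely because of the factor $\e^{-\eps z}$, not because of any integrability of $q_\lambda$ alone; the subsequent removal of that factor is the whole content of the dominated convergence step discussed above.
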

\begin{proof}
Take $f \in C_c(\R^+,\mathcal{P})$ and $g \in C_c^\infty(\R^+) \odot C(\Sf^{n-1})$, and set $m = \rho - t$. 
We may then write the expression \eqref{eq:long}, using the functions $h_\eps$ and $p$ defined in Equation \eqref{eq:func defn}, as
\begin{align*}
&\langle F_0(W_--\textup{Id} )F_0^* f, g \rangle \\
&= - \int_\R{\left(\lim_{\eps \to 0}{\int_0^\infty{\langle T(\lambda + i\eps) F_0^*\vp_\eps(M-\lambda) f, (\mu-\lambda+i\eps)^{-1} \Gamma_0(\mu)^*g(\mu) \rangle_{H^{0,m},H^{0,-m}}\, \d \mu}} \right)\, \d \lambda} \\
&= - \int_{\R^+}{\left(\lim_{\eps \to 0}{\int_{\R^+}{\left\langle h_\eps(\lambda), \frac{\lambda^{-\frac{1}{4}} \mu^{\frac{1}{4}}}{\mu-\lambda+i\eps} p(\mu) \right\rangle_{H^{0,m},H^{0,-m}}\, \d \mu}} \right)\, \d \lambda}.
\end{align*}
We now use the formula
\begin{align*}
(\mu-\lambda+i\eps)^{-1} &= -i \int_0^\infty{\e^{i(\mu-\lambda)z}\e^{-\eps z}\, \d z}
\end{align*}
and Fubini's theorem to obtain 
\begin{align}\label{eq:domconv}
&\lim_{\eps \to 0}{\int_0^\infty{\left\langle h_\eps(\lambda), \frac{\lambda^{-\frac{1}{4}} \mu^{\frac{1}{4}}}{\mu-\lambda+i\eps} p(\mu) \right\rangle_{H^{0,m},H^{0,-m}}\, \d \mu}}\nonumber \\
&= -i \lim_{\eps \to 0}{\int_0^\infty{\e^{-\eps z} \left(\left\langle h_\eps(\lambda), \int_0^\infty{\e^{i(\mu-\lambda)z} \lambda^{-\frac{1}{4}} \mu^\frac{1}{4} p(\mu)  \, \d \mu} \right\rangle_{H^{0,m},H^{0,-m}} \right) \, \d z}} \nonumber \\
&= -i \lim_{\eps \to 0}{\int_0^\infty{\e^{-\eps z} \left(\left\langle h_\eps(\lambda), \int_{-\lambda}^\infty{\e^{i\nu z} \left(\frac{\nu+\lambda}{\lambda}\right)^\frac{1}{4} p(\nu+\lambda)  \, \d \nu} \right\rangle_{H^{0,m},H^{0,-m}} \right) \, \d z}},
\end{align}
where in the last line we have made a change of variables to recognise a Fourier transform. The $z$-integrand of \eqref{eq:domconv} can be bounded independently of $\eps \in (0,1)$ explicitly with
\begin{align}\label{ineq:estimate}
&\left|\e^{-\eps z} \left\langle h_\eps(\lambda), \int_{-\lambda}^\infty{\e^{i\nu z} \left(\frac{\nu+\lambda}{\lambda}\right)^\frac{1}{4} p(\nu+\lambda)  \, \d \nu} \right\rangle_{H^{0,m},H^{0,-m}} \right| \nonumber \\
&\leq \norm{h_\eps(\lambda)}_{H^{0,m}} \norm{\int_{-\lambda}^\infty{\e^{i\nu z} \left( \frac{\nu+\lambda}{\lambda} \right)^{\frac{1}{4}} p(\nu+\lambda)\, \d \nu}}_{H^{0,-m}} \nonumber \\
&:= \norm{h_\eps(\lambda)}_{H^{0,m}} j_\lambda(z),
\end{align}
where we have used $\e^{-\eps z} \leq 1$.
We also know from Lemma \ref{lem:converge} that as $\eps \to 0$, $h_\eps(\lambda)$ converges to $\lambda^\frac{1}{4} T(\lambda+i0) \Gamma_0(\lambda)^* f(\lambda)$ in $H^{0,m}$. Therefore the family $\norm{h_\eps(\lambda)}_{H^{0,m}}$ (and thus the entire expression \eqref{ineq:estimate}) is bounded by a constant independent of $\eps \in (0,1)$.

In order to exchange the integral over $z$ and the limit as $\eps \to 0$ in Equation \eqref{eq:domconv}, it remains to show that the function $j_\lambda$ of \eqref{ineq:estimate} is in $L^1(\R^+, \d z)$. Recall  $q_\lambda$ from Equation \eqref{eq:kew}. Note that $j_\lambda$ in \eqref{ineq:estimate} can be rewritten as $j_\lambda(z) = (2\pi)^\frac12 \norm{[\Fi^* q_\lambda](z)}_{H^{0,-m}}$ (here the Fourier transform is one-dimensional).

To estimate $j_\lambda$, we denote by $D = -i \pd{}{\nu}$ the self-adjoint operator acting (densely) on $L^2(\R)$ and by $P = (\textup{Id}+D^2)$. Then we have
\begin{align}\label{eq:fourier-P}
\norm{[\Fi^* q_\lambda](z)}_{H^{0,-m}} &= (1+z^2)^{-1} \norm{[\Fi^* P q_\lambda](z)}_{H^{0,-m}}.
\end{align}
The function $z \mapsto (1+z^2)^{-1}$ is in $L^1(\R^+, \d z)$ and thus it suffices to show $\norm{[\Fi^* P q_\lambda](z)}_{H^{0,-m}}$ is bounded independently of $z$. 
Suppose that $g = \eta \otimes \xi \in C_c^\infty(\R^+) \otimes C(\mathcal{P})$ is a simple tensor, so that we have
\begin{align*}
\left(\frac{\nu+\lambda}{\lambda}\right)^\frac{1}{4}[p(\nu+\lambda)](x) &=  2^{-\frac12} (2\pi)^{-\frac{n}{2}} \lambda^{-\frac{1}{4}} (\nu+\lambda)^\frac{n-2}{4} \eta(\nu+\lambda) \int_{\Sf^{n-1}}{\e^{i(\nu+\lambda)^\frac12 \langle x, \omega \rangle}\xi(\omega) \, \d \omega}.
\end{align*}
Differentiating with respect to $\nu$ twice shows that there are $\eta_{j,\lambda} \in C_c^\infty(\R^+)$ such that
\begin{align*}
[Pq_\lambda](\nu)(x) &=\eta_{1,\lambda}(\nu) \int_{\Sf^{n-1}}{\e^{i(\nu+\lambda)^\frac12 \langle x, \omega \rangle}\xi(\omega) \, \d \omega}  + \eta_{2,\lambda}(\nu) \int_{\Sf^{n-1}}{\langle x, \omega \rangle \e^{i(\nu+\lambda)^\frac12 \langle x, \omega \rangle}\xi(\omega) \, \d \omega}\\
& +\eta_{3,\lambda}(\nu) \int_{\Sf^{n-1}}{(\langle x, \omega \rangle)^2 \e^{i(\nu+\lambda)^\frac12 \langle x, \omega \rangle}\xi(\omega) \, \d \omega}.
\end{align*}
Then estimating each term individually we obtain $C_1,C_2,C_3,C>0$ such that
\begin{align*}
|[\F^* P q_\lambda](z)(x)| &\leq C_1 +C_2 |x| + C_3 |x|^2 
%&\leq C'(1+|x|+|x|^2) \\
%&\leq \begin{cases} C'(2+|x|^2), \quad & \textup{ if } |x| \leq 1, \\
%C'(1+2|x|^2), \quad & \textup{ if } |x| \geq 1, \end{cases} \\
\leq C(1+|x|^2)
\end{align*}
which gives
\begin{align*}
|[[\Fi^* P q_\lambda](z)](x)| &\leq C (1+|x|^2).
\end{align*}
For $\tau(x) = (1+|x|^2)$, we compute that
\begin{align*}
\norm{\tau}_{H^{0,-m}}^2 &= \int_{\R^n}{(1+|x|^2)^{-\frac{m}{2}}(1+|x|^2)\, \d x} =\textup{Vol}(\Sf^{n-1}) \int_0^\infty{r^{n-1} (1+r^2)^{1 -\frac{m}{2}}\, \d r},
\end{align*}
which is finite for $n -m-1 < -1$, or $m> n+2$. Thus $\tau \in H^{0,-m}$ for $m >n+2$.

We note in passing that the condition on $m = \rho - t$ requires $\rho > n+t+ 2$, which is guaranteed by Assumption \ref{ass:best-ass}. 

By linearity, for each $g \in C_c^\infty(\R^+) \odot C(\Sf^{n-1})$ we find that $\norm{[\Fi^* P q_\lambda](z)}_{H^{0,-m}}$ is bounded independently of $z$. Thus using Equation \eqref{eq:fourier-P} we have $\norm{[\Fi^* q_\lambda](z)}_{H^{0,-m}}\leq \tilde{C}(1+z^2)^{-1}$ and so  $\norm{[\Fi^* q_\lambda](z)}_{H^{0,-m}}\in L^1(\R^+)$.

As a consequence, we can apply Lebesgue's dominated convergence theorem to exchange the limit as $\eps \to 0$ with the integral over $z$ in Equation \eqref{eq:domconv} and obtain
\begin{align*}
& -i \lim_{\eps \to 0}{\int_0^\infty{\e^{-\eps z} \left(\left\langle h_\eps(\lambda), \int_{-\lambda}^\infty{\e^{i\nu z} \left(\frac{\nu+\lambda}{\lambda}\right)^\frac{1}{4} p(\nu+\lambda)  \, \d \nu} \right\rangle_{H^{0,m},H^{0,-m}} \right) \, \d z}} \\
&= -i \left\langle h_0(\lambda), \int_0^\infty{\int_\R{\e^{i\nu z} q_\lambda(\nu)\, \d \nu}\, \d z}\right\rangle_{H^{0,m},H^{0,-m}}.
\end{align*}
We  have thus shown (on a dense set of $f, g$) the equality
\begin{align*}
\langle F_0 (W_--\textup{Id} ) F_0^* f, g \rangle_{\H_{spec}} &= -i \int_{\R^+}{ \left\langle h_0(\lambda), \int_0^\infty{\e^{i\nu z} q_\lambda(\nu)\, \d \nu}\right\rangle_{H^{0,m},H^{0,-m}}\, \d \lambda}. \qedhere
\end{align*}
\end{proof}

\begin{rmk}
The proof of Lemma \ref{lem:part1} shows how the various requirements on $\rho$ in Assumption \ref{ass:best-ass} are obtained. In particular, in Lemma \ref{lem:resolvents} we assumed $\rho > \frac{n+1}{2}$ and in Lemma \ref{lem:gammabounded} we required $t > \frac{n}{2}$. In Lemma \ref{lem:part1} we required $\rho > n+t+2$ which gives $\rho > \frac{3n+4}{2}$ for $n \geq 5$. The case $n = 4$ requires $\rho > 12$ to use Lemma \ref{lem:res4d}. 
\end{rmk}

\subsection{Identification of function of dilation, completion of proof}

Recall now the operator $D_+$, the self-adjoint generator of dilations on $L^2(\R^+)$. We can easily take functions of $D_+$ using the group structure via the formula
\begin{align*}
[\psi(D_+)f](x) &= (2\pi)^{-\frac12} \int_\R{ [\Fi^* \psi](t) [U_+(t) f](x)\, \d t}.
\end{align*}
We introduce the function $\psi \in C(\R) \cap L^\infty(\R)$ defined by
\begin{align}\label{eq:dilationfunc}
\psi(x) &= \frac12 \left(1-\tanh{(2\pi x)}-i \cosh{(2\pi x)}^{-1} \right).
\end{align}
The Hilbert spaces $L^2(\R^+,H^{s,t})$ and $\H_{spec}$ can be naturally identified with the Hilbert spaces $L^2(\R^+) \otimes H^{s,t}$ and $L^2(\R^+) \otimes \mathcal{P}$. We recall the operator $G_-$ of Equation \eqref{eq:G-minus} and $B$ from Lemma \ref{lem:mult}.

\begin{thm}
\label{thm:hardwork}
Let $n \geq 4$ and suppose that $\rho, t$ satisfy Assumption \ref{ass:best-ass}, and that there are no resonances for $n = 4$. If $|V(x)| \leq C(1+|x|)^{-\rho}$ then one has in $\mathcal{B}(\H_{spec})$ the equality
\begin{align*}
F_0(W_--\textup{Id} )F_0^* &= -2\pi i G_-( \psi(D_+) \otimes \textup{Id}_{H^{0,\rho-t}} )B.
\end{align*}
\end{thm}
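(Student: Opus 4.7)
The strategy is to begin with the integral formula derived in Lemma \ref{lem:part1} and collapse the double integral over $z$ and $\nu$ into a Mellin convolution on $\R^+$, which by the functional calculus can then be recognised as (a scalar multiple of) a function of the dilation generator $D_+$ tensored with the identity on $\mathcal{P}$.

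First, I would apply Fubini to exchange the $z$- and $\nu$-integrals (legitimately, after substituting $\mu = \nu + \lambda$ and reinstating the $\e^{-\eps z}$ regularisation from the proof of Lemma \ref{lem:part1}) and use the distributional identity $\int_0^\infty \e^{i(\mu-\lambda)z}\,\d z = i(\mu - \lambda + i0)^{-1}$. This recasts the inner integral as
\[
\int_0^\infty\!\!\int_\R \e^{i\nu z} q_\lambda(\nu)\,\d\nu\,\d z = i\int_0^\infty \frac{(\mu/\lambda)^{1/4}}{\mu - \lambda + i0}\, p(\mu)\,\d \mu =: i[Kp](\lambda),
\]
so that $\langle F_0(W_- - \textup{Id})F_0^* f,g\rangle_{\H_{spec}} = \int_{\R^+}\langle h_0(\lambda), [Kp](\lambda)\rangle_{H^{0,m},H^{0,-m}}\,\d\lambda$ with $m = \rho - t$, $h_0 = Bf$, and $p = G_-^* g$.

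Next, to recognise $K$ as a function of $D_+$, the key observation is that its kernel depends only on the ratio $\mu/\lambda$. Substituting $\mu = \e^v \lambda$ and using $[U_+(v)p](\lambda) = \e^{v/2} p(\e^v \lambda)$, one obtains
\[
[Kp](\lambda) = \int_\R \frac{\e^{3v/4}}{\e^v - 1 + i0}\, [(U_+(v) \otimes \textup{Id}_\mathcal{P})p](\lambda)\,\d v,
\]
which by the relation $\chi(D_+) = (2\pi)^{-1/2}\int [\F^*\chi](v) U_+(v)\,\d v$ identifies $K$ as a multiple of $\chi(D_+) \otimes \textup{Id}$ for the function $\chi$ whose Fourier transform is (up to $\sqrt{2\pi}$) the kernel $v \mapsto \e^{3v/4}/(\e^v - 1 + i0)$. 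Computing this transform by substituting $u = \e^v$ reduces the problem to evaluating $\int_0^\infty u^{s-1}/(u - 1 + i0)\,\d u$ at $s = 3/4 \pm ix$, which by Sokhotski--Plemelj and the classical evaluation $\textnormal{PV}\int_0^\infty u^{s-1}/(u - 1)\,\d u = -\pi\cot(\pi s)$ equals $-\pi\cot(\pi s) - i\pi$ for $0 < \Re s < 1$. Simplifying $\cot(3\pi/4 \pm i\pi x)$ into an expression involving $\tanh(2\pi x)$ and $\cosh(2\pi x)^{-1}$ then yields exactly the stated $\psi(x) = \tfrac12(1 - \tanh(2\pi x) - i\cosh(2\pi x)^{-1})$. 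The overall prefactor of $-2\pi i$ appears naturally: an $i$ from the distributional identity above, a second $i$ from the Sokhotski--Plemelj contribution $-i\pi$, together with the $2\pi$ absorbed when transferring the operator from the second slot of the pairing (paired with $p = G_-^* g$) to an operator acting on $Bf$ via the sesquilinear adjoint of $\psi(D_+) \otimes \textup{Id}$.

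The main obstacle is bookkeeping rather than conceptual difficulty: the identifications take place in dual pairings between the Hilbert-space-valued Sobolev spaces $L^2(\R^+, H^{0,\pm m})$, so the intertwining relation must first be verified on a dense set of test functions $g = \eta \otimes \xi$ with $\eta \in C_c^\infty(\R^+)$ and $\xi \in C(\Sf^{n-1})$, where $\psi(D_+) \otimes \textup{Id}$ acts only on the $\R^+$-factor and the computation reduces to a scalar Mellin calculation. The identity then extends by density using the continuity and boundedness of $G_-$ (Lemma \ref{lem:limits}), $B$ (Lemma \ref{lem:mult}), and the bounded function $\psi(D_+) \otimes \textup{Id}$ from the functional calculus.
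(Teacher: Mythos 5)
Your proposal is correct and follows essentially the same route as the paper: start from Lemma \ref{lem:part1}, substitute $\mu=\e^v\lambda$ to expose a Mellin convolution with kernel $\e^{3v/4}/(\e^v-1+i0)$, identify it as a function of $D_+$ acting on $p=G_-^*g$, move it across the pairing by taking adjoints, and conclude by density using the boundedness of $G_-$ and $B$. The only difference is in the final identification of the spectral function: the paper splits $\F^*\chi_+$ into its delta and principal-value parts and matches the result against a tabulated Fourier transform of $\psi$, whereas you evaluate $\int_0^\infty u^{s-1}(u-1+i0)^{-1}\,\d u=-\pi\cot(\pi s)-i\pi$ at $s=\tfrac34+ix$ directly via Sokhotski--Plemelj --- a computation which indeed yields $-2\pi i\,\overline{\psi(x)}$ and hence, after the adjoint, the stated $-2\pi i\,\psi(D_+)$.
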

\begin{proof}
Take $f \in C_c(\R^+,\mathcal{P})$ and $g \in C_c^\infty(\R^+) \odot C(\Sf^{n-1})$, and set $m = \rho - t$. We will show that $\langle F_0 (W_--\textup{Id} ) F_0^* f, g \rangle_{\H_{spec}} $ is equal to $\left\langle -2\pi i N(\psi(D_+) \otimes \textup{Id}_{H^{0,m}} ) B f, g \right\rangle_{\H_{spec}}$. We write $\chi_+$ for the characteristic function of $\R^+$ and note that $q_\lambda$ has compact support to obtain the following (in the sense of distributions with values in $H^{0,-m}$).
\begin{align*}
\int_0^\infty{\int_\R{\e^{i\nu z} q_\lambda(\nu)\, \d \nu}\, \d z} &= (2\pi)^\frac12 \int_\R{[\Fi^* \chi_+](\nu) q_\lambda(\nu)\, \d \nu} \\
&= (2\pi)^\frac12 \int_{-\lambda}^\infty{[\Fi^* \chi_+](\nu) \left(\frac{\nu+\lambda}{\lambda} \right)^\frac{1}{4} p(\nu+\lambda)\, \d \nu} \\
&= (2\pi)^\frac12 \int_\R{ [\Fi^* \chi_+](\lambda(\e^\mu-1)) \lambda \e^{\frac{5\mu}{4}} p(\e^\mu \lambda)\, \d \mu} \\
&= (2\pi)^\frac12 \int_\R{[\Fi^* \chi_+](\lambda(\e^\mu-1)) \lambda \e^{\frac{3\mu}{4}} [(U_+(\mu) \otimes \textup{Id}_{H^{0,-m}}) p](\lambda)\, \d \mu}.
\end{align*}
Next we note that the inverse Fourier transform of the characteristic function $\chi_+$ is the distribution $[\Fi^* \chi_+](y) = \pi^\frac12 2^{-\frac12} \delta(y) + i(2\pi)^{-\frac12} \textup{Pv}\frac{1}{y}$ (here $\textup{Pv}$ denotes the principal value), from which we obtain
\begin{align*}
\int_0^\infty{\int_\R{\e^{i\nu z} q_\lambda(\nu)\, \d \nu}\, \d z} &= \int_\R{\left(\pi \delta(\e^\mu-1)+i \textup{Pv} \frac{\e^{\frac{3\mu}{4}}}{\e^\mu-1} \right)[(U_+(\mu) \otimes \textup{Id}_{H^{0,-s}} ) p](\lambda)\Bigg\rangle_{H^{0,m},H^{0,-m}}\, \d \mu}
\end{align*}
Next we note that
\begin{align*}
\frac{\e^{\frac{3\mu}{4}}}{\e^\mu-1} &= \frac{1}{4} \left( \frac{1}{\sinh{\left(\frac{\mu}{4} \right)}} + \frac{1}{\cosh{\left(\frac{\mu}{4}\right)}} \right)
\end{align*}
and the known Fourier transform of the function $\psi$ of Equation \ref{eq:dilationfunc} in \cite[Table 20.1]{jeffrey08} as
\begin{align*}
[\Fi \psi](\mu) &= \pi^\frac12 2^{-\frac12} \delta(\e^\mu-1) + \frac{i}{4} (2\pi)^{-\frac12} \textup{Pv} \left( \frac{1}{\sinh{\left(\frac{\mu}{4} \right)}} + \frac{1}{\cosh{\left(\frac{\mu}{4}\right)}} \right).
\end{align*}
Combining these facts, we obtain
\begin{align*}
\langle F_0(W_--\textup{Id}  f, g \rangle_{\H_{spec}} 
= i \int_{\R^+}\Bigg\langle h_0(\lambda),&\int_\R \Bigg(\pi \delta(\e^\mu-1) +\frac{1}{4} \textup{Pv} \Bigg( \frac{1}{\sinh{\left(\frac{\mu}{4} \right)}} + \frac{1}{\cosh{\left(\frac{\mu}{4}\right)}}  \Bigg) \\
&\times  [(U_+(\mu) \otimes \textup{Id}_{H^{0,-m}} )p ](\lambda)\Bigg) \, \d \mu \Bigg\rangle_{H^{0,m},H^{0,-m}}\, \d \lambda.
\end{align*}
We now note that $[Bf](\lambda) = h_0(\lambda)$, $p = G_-^* g$ and
\begin{align*}
(\psi(D_+) \otimes \textup{Id}_{H^{0,m}})^* p &= (2\pi)^{-\frac12} \int_\R{[\Fi \psi](\mu) (U_+(\mu)\otimes \textup{Id}_{H^{0,m}}) p\, \d \mu}.
\end{align*}
Combining these facts we obtain 
\begin{align*}
\langle F_0(W_--\textup{Id} ) F_0^* f, g \rangle_{\H_{spec}} &= 2\pi i \int_{\R^+}{\left\langle [B f](\lambda), [(\psi(D_+)^* \otimes \textup{Id}_{H^{0,-s}})G_-^* g](\lambda)\right\rangle_{H^{0,m},H^{0,-m}}\, \d \lambda} \\
&= \left\langle -2\pi iG_- (\psi(D_+) \otimes \textup{Id}_{H^{0,m}}) Bf, g \right\rangle_{\H_{spec}}.
\end{align*}
Since both $C_c(\R^+, \mathcal{P})$ and $C_c^\infty(\R^+) \odot C(\Sf^{n-1})$ are dense in $\H_{spec}$, this concludes the proof.
\end{proof}

\begin{lemma}\label{lem:commutator}
Take $s \geq 0$ and $t > \frac{n}{2}$, and suppose that there are no resonances for $n = 4$. Then the difference
\begin{align}\label{eq:compactdiff}
&(\psi(D_+)\otimes \textup{Id}_{\mathcal{P}}) G_- -G_- (\psi(D_+) \otimes \textup{Id}_{H^{s,t}})
\end{align}
is in $\mathcal{K}(L^2(\R^+, H^{s,t}), \H_{spec})$.
\end{lemma}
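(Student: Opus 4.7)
The approach is an approximation argument in the spirit of Lemma~\ref{lem:f(x)g(nabla)}. By Lemma~\ref{lem:limits}, the map $\lambda \mapsto G_-(\lambda)$ lies in $C_0(\R^+, \mathcal{K}(H^{s,t}, \mathcal{P}))$. Since $C_0(\R^+) \odot \mathcal{K}(H^{s,t}, \mathcal{P})$ is uniformly dense in $C_0(\R^+, \mathcal{K}(H^{s,t}, \mathcal{P}))$ by \cite[Theorem 1.15]{prolla77}, and since the uniform norm of such an operator-valued function agrees with the norm of the corresponding multiplication operator in $\mathcal{B}(L^2(\R^+, H^{s,t}), \H_{spec})$, the operator $G_-$ can be approximated in operator norm by finite sums $\sum_j a_j(L)\otimes b_j$ with $a_j\in C_0(\R^+)$ and $b_j \in \mathcal{K}(H^{s,t},\mathcal{P})$. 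Because $\psi(D_+)\otimes \textup{Id}$ is bounded and the compacts form a norm-closed ideal, it suffices to establish compactness of the difference \eqref{eq:compactdiff} when $G_-$ is replaced by a single elementary tensor.

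For $T = a(L)\otimes b$ the difference in \eqref{eq:compactdiff} equals $[\psi(D_+), a(L)] \otimes b$. Since the tensor product of two compact operators is compact, the remaining task is to show $[\psi(D_+), a(L)] \in \mathcal{K}(L^2(\R^+))$ whenever $a\in C_0(\R^+)$. I would transport the question to $L^2(\R)$ via the Mellin-type isometry $(\Phi f)(s) = e^{s/2} f(e^s)$, under which $D_+$ is carried to $P = -i\partial_s$ and $L$ to multiplication by $e^s$. The commutator becomes $[\psi(P), M_{\tilde a}]$ on $L^2(\R)$ with $\tilde a := a \circ \exp \in C_0(\R)$, which is precisely the setting of a commutator between a function of position and a function of momentum that was already exploited in the proof of Lemma~\ref{lem:f(x)g(nabla)}.

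The function $\psi$ is bounded and continuous, with limits $\psi(-\infty)=1$ and $\psi(+\infty)=0$, so extends continuously to the two-point compactification of $\R$. I would split $\psi = \chi + \psi_0$, fixing once and for all a smooth interpolation $\chi$ between $1$ (at $-\infty$) and $0$ (at $+\infty$) with $\chi'\in \mathcal{S}(\R)$, so that $\psi_0 := \psi - \chi \in C_0(\R)$. The commutator $[\psi_0(P), M_{\tilde a}]$ is compact since both $\psi_0(P) M_{\tilde a}$ and $M_{\tilde a}\psi_0(P)$ are compact by \cite[Theorem 4.1]{simon79}. For $[\chi(P), M_{\tilde a}]$, the covariance relation $e^{itP} M_{\tilde a} e^{-itP} = M_{\tilde a(\cdot+t)}$ together with the identity $\widehat{\chi'}(t) = it\,\widehat\chi(t)$ gives, after one integration by parts in $t$, an expression which is a norm-convergent Bochner integral of operators of the form $M_{\tilde a(\cdot+t) - \tilde a(\cdot)}\,e^{itP}$ weighted by the Schwartz function $\widehat{\chi'}$. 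Each such integrand is compact by \cite[Theorem 4.1]{simon79}, so the integral is compact.

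The principal obstacle is this last step: because $\psi \notin C_0(\R)$ and $\psi \notin L^2(\R)$, neither Lemma~\ref{lem:f(x)g(nabla)} nor \cite[Theorem 4.1]{simon79} applies directly to $[\psi(P), M_{\tilde a}]$. Separating off the smooth step $\chi$ with $\chi'\in \mathcal{S}(\R)$ is the device that brings the problem back into the scope of results already used in the paper, after which the tensor-product approximation of the first paragraph closes the argument.
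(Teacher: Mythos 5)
Your reduction steps are sound and follow the route one would extract from the reference the paper itself leans on here (the paper omits the proof, citing \cite[Lemma~2.7]{richard13}): Lemma~\ref{lem:limits} places $\lambda\mapsto G_-(\lambda)$ in $C_0(\R^+,\mathcal{K}(H^{s,t},\mathcal{P}))$, density of the algebraic tensor product reduces the claim to compactness of $[\psi(D_+),a(L)]\otimes b$, and the Mellin-type unitary converts this to $[\psi(P),M_{\tilde a}]$ on $L^2(\R)$ with $\tilde a=a\circ\exp\in C_0(\R)$. The splitting $\psi=\chi+\psi_0$ with $\psi_0\in C_0\cap L^2$ is also the right device, and the $\psi_0$ part is handled correctly. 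The gap is in the final step. The integrand $M_{\tilde a(\cdot+t)-\tilde a}\,\e^{itP}$ is \emph{not} compact: writing $\e^{itP}$ as translation, one has $M_g\e^{itP}=\e^{itP}M_{g(\cdot-t)}$, a unitary composed with a multiplication operator, which is compact only if $g=0$. Simon's theorem requires \emph{both} factors to vanish at infinity, and $\e^{it\xi}$ does not, so the norm-convergent Bochner integral is an integral of non-compact operators and yields nothing.

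The statement you need is nevertheless true (it is Hartman's theorem on Hankel operators with vanishing symbol, often quoted as Cordes' lemma), and the quickest self-contained repair is a Hilbert--Schmidt estimate. For $\tilde a\in C_c^\infty(\R)$ the operator $[\chi(P),M_{\tilde a}]$ has momentum-space kernel $(2\pi)^{-\frac12}(\chi(\xi)-\chi(\eta))\widehat{\tilde a}(\xi-\eta)$. Writing $\chi(\eta+u)-\chi(\eta)=\int_0^u\chi'(\eta+v)\,\d v$ and using Cauchy--Schwarz gives $\int_\R|\chi(\eta+u)-\chi(\eta)|^2\,\d\eta\leq u^2\norm{\chi'}_{L^2}^2$, whence
\begin{align*}
\iint_{\R^2}\left|\chi(\xi)-\chi(\eta)\right|^2\left|\widehat{\tilde a}(\xi-\eta)\right|^2\d\xi\,\d\eta
\;\leq\; \norm{\chi'}_{L^2}^2\int_\R u^2\left|\widehat{\tilde a}(u)\right|^2\d u
\;=\;\norm{\chi'}_{L^2}^2\norm{\tilde a'}_{L^2}^2<\infty ,
\end{align*}
so $[\chi(P),M_{\tilde a}]$ is Hilbert--Schmidt, hence compact, for $\tilde a\in C_c^\infty$. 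Since $\tilde a\mapsto[\chi(P),M_{\tilde a}]$ is norm-continuous from $(C_0(\R),\norm{\cdot}_\infty)$ to $\mathcal{B}(L^2(\R))$, uniform approximation by $C_c^\infty$ functions extends compactness to all $\tilde a\in C_0(\R)$. With this substitution for your Bochner-integral step, the rest of your argument goes through and proves the lemma.
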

This is a straightforward generalisation of \cite[Lemma 2.7]{richard13} and thus we omit the proof.

We now recall the action of the dilation group in $\R^n$ as $[U_n(t)f](x) = \e^{\frac{nt}{2}} f(\e^t x)$, and denote its self-adjoint generator by $D_n$. Then we have the relation $F_0 D_n F_0^* = -2 D_+ \otimes \textup{Id}_{\mathcal{P}}$ on $\Dom( D_+ \otimes \textup{Id}_{\mathcal{P}})$. Thus if we define $\vp(x) = \frac12(1+\tanh{(\pi x)} -i \cosh{(\pi x)}^{-1})$, we find
\begin{align}\label{eq:dilationconj}
F_0 \vp(D_n) F_0^* &= \psi(D_+) \otimes \textup{Id}_\mathcal{P}.
\end{align}
We can now prove the main result.
\begin{proof}[Proof of Theorem \ref{thm:main}]
We deduce from Theorem \ref{thm:hardwork} and Lemma \ref{lem:commutator} that
\begin{align*}
W_- - \textup{Id} &= -2\pi i F_0^*G_-(\psi(D_+)\otimes \textup{Id}_{H^{0,\rho-t}}) B F_0 \\
&= -2\pi i F_0^*\big[G_-(\psi(D_+)\otimes \textup{Id}_{H^{0,\rho-t}}) -(\psi(D_+) \otimes \textup{Id}_\mathcal{P})G_-+(\psi(D_+) \otimes \textup{Id}_\mathcal{P})G_- \big] B F_0 \\
&= -2\pi i F_0^*(\psi(D_+) \otimes \textup{Id}_{\mathcal{P}}) G_-B F_0 + K \\
&= \vp(D_n) F_0^* (-2\pi i) G_-B F_0+K,
\end{align*}
with $K:= -2\pi i F_0^*\big(G_-(\psi(D_+) \otimes \textup{Id}_{H^{0,\rho-t}})-(\psi(D_+) \otimes \textup{Id}_\mathcal{P})G_- \big)B F_0 \in \mathcal{K}(\H)$. Note that the last equality follows from Equation \eqref{eq:dilationconj}.
We can compute explicitly that
\begin{align*}
-2\pi i [G_-B f](\lambda,\omega) &= -2\pi i [\Gamma_0(\lambda) T(\lambda+i0) \Gamma_0(\lambda)^* f(\lambda,\cdot)](\omega) 
= [(S(\lambda)-\textup{Id})f(\lambda,\cdot)](\omega),
\end{align*}
by Theorem \ref{thm: stationary scattering operator}.
Thus we see that $F_0^*(-2\pi i) G_-B F_0 = S-\textup{Id}$, proving the claim.
\end{proof}

We note that the case $n = 4$ differs from the case $n = 2$ in the contribution of different types of resonances. As shown in \cite{richard21} in dimension $n = 2$, we need to exclude $p$-resonances but not $s$-resonances to obtain our structural formula. This distinction comes from a detailed analysis of the low energy behaviour of the operator $T(z)$ as in \cite{jensen01} for dimension $n = 2$ and \cite{jensen84} for dimension $n = 4$. When computing a low energy expansion for $T(z)$ in dimension four, all `bad' behaviour in the expansion is confined to a single rank-one operator, whilst in dimension two all `bad' behaviour in the expansion is confined to a rank-one and a rank-two operator which allows us to distinguish the different kinds of resonances. In dimensions $n=2,4$ it is known that resonances give an integer contribution to statements of Levinson's theorem \cite[Theorem 6.3]{bolle88} in contrast to the half-integer contributions which occur in dimensions $n=1,3$.

We also note that as an immediate consequence of Theorem \ref{thm:main} we can view Levinson's theorem in all dimensions as an index theorem in an identical manner to that of the case $n = 3$ discussed in \cite[Sections 4-6]{kellendonk12}, to which we refer for details. In the next section, we instead use the result of Theorem \ref{thm:main} to give a new topological interpretation of Levinson's theorem.

%%%%%%%%%%%%%%%%%%%%%%%%%%%%%%%%%%%%%%%%%%%%%%%%%%%%%%%%%%%%%%%%%%%%%
%%%%%%%%%%%%%%%%%%%%%%%%%%%%%%%%%%%%%%%%%%%%%%%%%%%%%%%%%%%%%%%%%%%%%
%%%%%%%%%%%%%%%%%%%%%%%%%%%%%%%%%%%%%%%%%%%%%%%%%%%%%%%%%%%%%%%%%%%%%

\section{Levinson's theorem as an index pairing}\label{sec:pairing}

We now show how the formula for the wave operator implies that the number of bound states is the result of an index pairing between $K$-theory and $K$-homology, see \cite{CPR4, HR}. We first recall the definition of a spectral triple.

\begin{defn} 
\label{def:ST}
An odd
spectral triple $(\A,\H,\D)$ is given by a Hilbert space $\H$, a
$*$-subalgebra  $\A\subset\B(\H)$ acting on
$\H$, and a densely defined unbounded self-adjoint operator $\D$  such that:

1. $a\cdot{\rm dom}\,\D\subset {\rm dom}\,\D$ for all $a\in\A$, so that
$da:=[\D,a]$ is densely defined.  Moreover, $da$ extends to a bounded operator  for all $a\in\A$;

2. $a(1+\D^2)^{-1/2}\in\K(\H)$ for all $a\in\A$.
\end{defn}

Spectral triples define classes in the $K$-homology of the 
norm closure $\overline{\A}$, a $C^*$-algebra. We will produce a spectral triple for $C_c^\infty((0,\infty),\K)$ where $\K$ is the compact operators on $L^2(\Sf^{n-1})$.

\begin{lemma}
The spaces $L^2(\R,\d x)$ and $L^2(\R^+, \d y)$ are unitarily equivalent via the map $W: L^2(\R) \to L^2(\R^+)$ given by 
\begin{align}
[Wf](y) &= y^{-\frac12} f(\ln(y))
\end{align}
with adjoint $W^*:L^2(\R^+, \d y) \to L^2(\R,\d x)$ given by $
[W^*g](x) = \e^{\frac{x}{2}} g(\e^x).
$
\end{lemma}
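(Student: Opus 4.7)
The plan is to verify directly that $W$ is an isometry and that the given formula defines a two-sided inverse (equivalently, the adjoint), both of which reduce to the substitution $y = e^x$, $\d y = e^x\, \d x$.

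First I would check that $W$ maps $L^2(\R,\d x)$ isometrically into $L^2(\R^+, \d y)$. For $f \in L^2(\R)$ one computes
\[
\|Wf\|_{L^2(\R^+)}^2 = \int_0^\infty y^{-1}|f(\ln y)|^2\,\d y = \int_\R e^{-x}|f(x)|^2\, e^x\,\d x = \|f\|_{L^2(\R)}^2,
\]
after the substitution $x = \ln y$. A parallel calculation, using $y = e^x$ in the integral defining $\|W^*g\|_{L^2(\R)}^2$, shows that $W^*$ is also isometric.

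Next I would verify that $W^*$ is indeed the Hilbert-space adjoint of $W$ by pairing:
\[
\langle Wf, g\rangle_{L^2(\R^+)} = \int_0^\infty y^{-1/2} f(\ln y)\,\overline{g(y)}\,\d y = \int_\R f(x)\,\overline{e^{x/2} g(e^x)}\,\d x = \langle f, W^*g\rangle_{L^2(\R)},
\]
using the same change of variable. Finally, a direct pointwise computation gives $[WW^*g](y) = y^{-1/2}\,e^{(\ln y)/2}\,g(e^{\ln y}) = g(y)$ and $[W^*Wf](x) = e^{x/2}\, e^{-x/2} f(\ln e^x) = f(x)$, so $W$ is unitary with inverse $W^*$.

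There is no real obstacle here; the only point requiring a word of care is that these identities first hold on a dense core (e.g.\ $C_c(\R)$ and $C_c(\R^+)$) where Fubini and pointwise evaluation are unproblematic, and then extend by continuity to all of $L^2$ since $W$ and $W^*$ have already been shown to be bounded by $1$.
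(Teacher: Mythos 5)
Your verification is correct and is precisely the ``simple check'' that the paper leaves to the reader: the change of variables $y=\e^x$ establishes the isometry, the adjoint relation, and the two composition identities exactly as you compute them. No discrepancy with the paper's (omitted) argument.
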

\begin{proof}
This is a simple check.
%We could show directly that $W$ is the desired operator by simply checking unitarity, however it is more useful to see how $W$ is constructed. First, we define the operator $U:L^2(\R,\d x) \to L^2\left( \R^+, \frac{\d \lambda}{\lambda} \right)$ via
%\begin{align}
%[Uf](\lambda) &= f(\ln(\lambda)).
%\end{align}
%A quick calculation shows that $U$ is unitary
%\begin{align*}
%\langle Uf, Ug \rangle &= \int_{\R^+}{ \overline{f(\ln(\lambda))} g(\ln(\lambda))\, \frac{\d \lambda}{\lambda}} \\
%			  &= \int_\R{\overline{f(x)} g(x) \, \d x} \\
%			  &= \langle f, g \rangle,
%\end{align*}
%where we have made the substitution $\lambda = \e^x$ and the measure then transforms as $\d\lambda = \e^x \d x$. 
%with inverse  $[U^* g](x) = g(\e^x)$. To change to  Lebesgue measure on the half-line we define the transformation $V: L^2\left( \R^+, \frac{\d \lambda}{\lambda} \right) \to L^2(\R^+, \d t)$ by
%\begin{align}
%[V h](y) = y^{-\frac12} h(y).
%\end{align}
%Again unitarity is a simple check
%\begin{align*}
%\langle Vh, Vg \rangle &= \int_{\R^+}{   \overline{y^{-\frac12}h(y)} y^{-\frac12} g(y)\, \d y} \\
%			  &= \int_{\R^+}{\overline{ h(y)} g(y) \frac{\d y}{y}} \\
%			  &= \langle h, g \rangle.
%\end{align*}
%Note that 
%and the inverse is  $[V^* g](\lambda) = \lambda^\frac12 g(\lambda)$. We can then define our unitary equivalence between $L^2(\R, \d x)$ and $L^2(\R^+, \d y)$ via the composition $W = VU$, which is unitary by construction.
\end{proof}

\begin{lemma}
\label{lem:arr-plus}
The two spectral triples
$$
\Big(C_c^\infty(\R),L^2(\R,\d x),\frac{1}{i}\frac{\d}{\d x}\Big)
\quad\mbox{and}\quad
\Big(C_c^\infty(\R^+),L^2(\R^+,\d y),\frac{y}{i}\frac{\d}{\d y}+\frac{1}{2i}\Big)
$$
are unitarily equivalent.
\end{lemma}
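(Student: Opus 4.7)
The plan is to verify directly that $W$ intertwines the Dirac-like operators and that conjugation by $W$ carries the algebra $C_c^\infty(\R)$ (acting by multiplication on $L^2(\R)$) onto $C_c^\infty(\R^+)$ (acting by multiplication on $L^2(\R^+)$). Since unitarity of $W$ is already established in the preceding lemma, this is essentially a routine chain-rule computation, but it is worth laying out carefully.

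First I would compute $W\bigl(\tfrac{1}{i}\tfrac{\d}{\d x}\bigr)W^{\ast}$ on a core, say $C_c^\infty(\R^+)$. For $g \in C_c^\infty(\R^+)$, $[W^\ast g](x) = \e^{x/2} g(\e^x)$, so
\[
\tfrac{1}{i}\tfrac{\d}{\d x}[W^\ast g](x) = \tfrac{1}{2i}\e^{x/2} g(\e^x) + \tfrac{1}{i}\e^{3x/2} g'(\e^x).
\]
Applying $W$ and setting $y=\e^x$ gives
\[
[W\tfrac{1}{i}\tfrac{\d}{\d x}W^\ast g](y) = \tfrac{1}{2i}g(y) + \tfrac{y}{i}g'(y) = \bigl[\bigl(\tfrac{y}{i}\tfrac{\d}{\d y}+\tfrac{1}{2i}\bigr)g\bigr](y),
\]
which is the desired intertwining relation $W D W^\ast = D_+$ on the common core.

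Next I would verify the algebra intertwining. For $a \in C_c^\infty(\R)$ acting by multiplication on $L^2(\R)$ and $g \in L^2(\R^+)$,
\[
[W a W^\ast g](y) = y^{-1/2} a(\ln y)\, \e^{(\ln y)/2} g(\e^{\ln y}) = (a\circ \ln)(y)\, g(y),
\]
so $W a W^\ast$ is multiplication by $a\circ\ln$. Since $\ln:\R^+\to\R$ is a diffeomorphism, the map $a\mapsto a\circ\ln$ is a $\ast$-algebra isomorphism from $C_c^\infty(\R)$ onto $C_c^\infty(\R^+)$, with inverse $b\mapsto b\circ\exp$. This shows $W\,C_c^\infty(\R)\,W^\ast = C_c^\infty(\R^+)$, which together with the intertwining of the Dirac operators gives unitary equivalence of the two spectral triples.

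There is really no hard step here; the only mild subtlety is the appearance of the shift $\tfrac{1}{2i}$, which is precisely what the conjugation of $\tfrac{1}{i}\tfrac{\d}{\d x}$ by the measure-theoretic twist $y^{-1/2}$ in the definition of $W$ produces. (Essentially, $W$ implements the change of variables $y=\e^x$ while preserving the $L^2$-norm, and the extra $\tfrac{1}{2i}$ is the symmetric completion needed to make the image operator self-adjoint on $L^2(\R^+,\d y)$.) I would conclude by remarking briefly that the spectral triple axioms (compact resolvent on the algebra, bounded commutators) transport automatically under this unitary equivalence, so no further verification is needed.
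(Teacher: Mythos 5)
Your proposal is correct and follows the same route as the paper: the paper's proof is the one-line observation that conjugating $\frac{y}{i}\frac{\d}{\d y}+\frac{1}{2i}$ by $W$ yields $\frac{1}{i}\frac{\d}{\d x}$, and your chain-rule computation (together with the check that $a\mapsto a\circ\ln$ carries the algebras onto one another) is exactly the verification the paper leaves implicit.
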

\begin{proof}
Conjugating $\frac{\lambda}{i}\frac{\d}{\d\lambda}+\frac{1}{2i}$ by $W: L^2(\R) \to L^2(\R^+)$ gives $\frac{1}{i}\frac{\d}{\d x}$.
\end{proof}
Since $-i\od{}{x}$ defines a non-trivial (indeed generating) $K$-homology class for $C_0(\R)$, we see that the generator of dilations on the half-line defines a non-trivial $K$-homology class for $C_0(\R^+)$.

Using this identification we immediately get the following in our context.
\begin{cor}
\label{cor:spectrip}
The data $(C_c^\infty(\R^+) \otimes \mathcal{K}(L^2(\Sf^{n-1})), L^2(\R^+) \otimes L^2(\Sf^{n-1}), D_+ \otimes \textup{Id})$ defines an odd spectral triple, and so a class $[D_+]$ in odd $K$-homology.
\end{cor}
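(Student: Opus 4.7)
The plan is to verify the three conditions of Definition \ref{def:ST} by leveraging Lemma \ref{lem:arr-plus} and then tensoring with $\mathcal{K}(L^2(\Sf^{n-1}))$. By Lemma \ref{lem:arr-plus}, the triple $(C_c^\infty(\R^+), L^2(\R^+, \d y), D_+)$ is unitarily equivalent via $W$ to the canonical Dirac spectral triple $(C_c^\infty(\R), L^2(\R, \d x), -i\od{}{x})$, which is the standard odd spectral triple representing the generator of $K^1(C_0(\R))\cong K^1(C_0(\R^+))\cong \Z$. In particular, for each $f \in C_c^\infty(\R^+)$ the commutator $[D_+, f]$ is bounded and $f(1+D_+^2)^{-1/2}$ is a compact operator on $L^2(\R^+)$, the latter being a standard consequence of the Rellich compactness theorem applied to compactly supported $f$ after conjugation by $W$ to the real line.

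Writing $\A = C_c^\infty(\R^+) \odot \mathcal{K}(L^2(\Sf^{n-1}))$, $\H = L^2(\R^+) \otimes L^2(\Sf^{n-1})$ and $\Dtilde = D_+ \otimes \textup{Id}$, I would check the three conditions on simple tensors $a = f \otimes k$ with $f \in C_c^\infty(\R^+)$ and $k \in \mathcal{K}(L^2(\Sf^{n-1}))$, and extend by linearity to all of $\A$. Domain invariance is immediate since $f$ preserves $\Dom(D_+)$. The commutator factorises as $[\Dtilde, a] = [D_+, f] \otimes k$ and is therefore bounded. For the compact resolvent condition, a functional calculus argument gives $(1+\Dtilde^2)^{-1/2} = (1+D_+^2)^{-1/2}\otimes \textup{Id}$, so
\[
a(1+\Dtilde^2)^{-1/2} = f(1+D_+^2)^{-1/2} \otimes k,
\]
which is compact as the tensor product of two compact operators, using the standard inclusion $\mathcal{K}(\H_1) \otimes \mathcal{K}(\H_2) \subset \mathcal{K}(\H_1\otimes \H_2)$.

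Finally, the norm closure of $\A$ is the $C^*$-algebra $C_0(\R^+) \otimes \mathcal{K}(L^2(\Sf^{n-1}))$, so the spectral triple defines a class $[D_+] \in K^1(C_0(\R^+)\otimes\mathcal{K}(L^2(\Sf^{n-1})))$, which by Morita invariance is naturally isomorphic to $K^1(C_0(\R^+))\cong \Z$; under this isomorphism $[D_+]$ corresponds to the generator by Lemma \ref{lem:arr-plus}. The statement is essentially assembled from already-established pieces, and the only non-trivial input is compactness of $f(1+D_+^2)^{-1/2}$ for $f \in C_c^\infty(\R^+)$; everything else is routine tensor product bookkeeping and an invocation of Morita invariance of $K$-homology.
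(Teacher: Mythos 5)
Your proposal is correct and follows exactly the route the paper intends: the paper states the corollary as an immediate consequence of Lemma \ref{lem:arr-plus}, and your verification of the conditions of Definition \ref{def:ST} on simple tensors $f\otimes k$ (with the compact factor $k$ supplying compactness of $a(1+\Dtilde^2)^{-1/2}$, which would fail for $f\otimes\textup{Id}$) together with stability of $K$-homology under tensoring with $\mathcal{K}$ is precisely the routine bookkeeping the authors suppress. No gaps.
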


\begin{lemma}
\label{lem:ess-kay}
When $S(0)=\textup{Id}$ the scattering operator defines an element of the odd $K$-theory $[S]\in K_1(C_0(\R^+)\ox\K)$. For suitably decaying potentials, for example those satisfying Assumption \ref{ass:best-ass}, $S(0)=\textup{Id}$ in all dimensions except dimension 1 (generically) and dimension 3 (in the presence of resonances).
%This happens when $n =2$, $n = 3$ with no resonances or $n \geq 4$ (need an efficient way to describe $\rho$ here...).
\end{lemma}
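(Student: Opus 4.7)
The plan is to establish that $S-\textup{Id}$ lies in $C_0(\R^+)\ox\K$ under the hypothesis $S(0)=\textup{Id}$, from which the $K_1$ class $[S]$ is obtained via the standard description of odd $K$-theory for a nonunital $C^*$-algebra: a unitary $u$ in the unitisation with $u-1$ in the ideal. Since $S(\lambda)$ is unitary on $\mathcal{P}=L^2(\Sf^{n-1})$ for every $\lambda\in\R^+$ by Theorem \ref{thm: stationary scattering operator}, it suffices to verify that the map $\lambda\mapsto S(\lambda)-\textup{Id}$ is norm-continuous, takes values in $\K(\mathcal{P})$, and vanishes at the two ends of $\R^+$.

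First I would apply the stationary formula
\[
S(\lambda)-\textup{Id} = -2\pi i\,\Gamma_0(\lambda)T(\lambda+i0)\Gamma_0(\lambda)^*
\]
together with Lemma \ref{lem:limits}, which gives $\Gamma_0(\lambda)\in\K(H^{s,t},\mathcal{P})$ for $s\geq 0$ and $t>n/2$, to conclude that $S(\lambda)-\textup{Id}$ is compact on $\mathcal{P}$ for each fixed $\lambda>0$. Norm-continuity in $\lambda$ is part of Theorem \ref{thm: stationary scattering operator}, combined with the continuity of $\Gamma_0(\lambda)$ and $T(\lambda+i0)$ established in Section \ref{sec:resolvents} and Lemma \ref{lem:mult}. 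The decay as $\lambda\to\infty$ is exactly the content of the corollary immediately following Lemma \ref{lem:mult}, which gave $S(\infty)=\textup{Id}$ by exploiting that $G_-(\lambda)\to 0$ while $B(\lambda)$ stays bounded. The decay as $\lambda\to 0$ is precisely the hypothesis $S(0)=\textup{Id}$. Together these four properties identify $S-\textup{Id}$ with a norm-continuous compact-operator-valued function on $\R^+$ vanishing at both endpoints, i.e.\ an element of $C_0(\R^+)\ox\K$, and $S$ itself as a unitary in the unitisation, so $[S]\in K_1(C_0(\R^+)\ox\K)$ is well defined.

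For the second assertion, listing the dimensions in which the hypothesis $S(0)=\textup{Id}$ holds, I would simply invoke the results assembled in Section \ref{sec:prelim}. For $n\geq 4$ under Assumption \ref{ass:best-ass} (excluding resonances in dimension $4$) this is Theorem \ref{thm:scatmatzero}. For $n=2$ it is the statement from \cite{richard21} quoted just before that theorem, valid without further resonance hypotheses under the stated decay. For $n=3$ the result from \cite{jensen79} gives $S(0)=\textup{Id}-2P$, which equals $\textup{Id}$ precisely in the absence of a zero-energy resonance. In dimension $1$, the quoted result that $\det S(0)=\pm 1$ depending on resonant behaviour shows that $S(0)=\textup{Id}$ fails generically.

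I expect no serious obstacle: the lemma is essentially a bookkeeping exercise that packages the analytic facts already proven in Section \ref{sec:prelim} into the language of $K$-theory. The one point that deserves a sentence of explanation is the compactness of $S(\lambda)-\textup{Id}$ pointwise, which follows immediately from the factorisation through the trace operator $\Gamma_0(\lambda)$ since the embedding $H^{s,t}\hookrightarrow H^{q,r}$ for $q<s$, $r<t$ is compact (as cited in the proof of Lemma \ref{lem:limits}).
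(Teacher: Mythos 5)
Your proposal is correct, and it is essentially the argument the authors intend: the paper states Lemma \ref{lem:ess-kay} without proof, and your write-up assembles exactly the ingredients the paper has already established (the stationary formula and norm-continuity from Theorem \ref{thm: stationary scattering operator}, pointwise compactness via the factorisation through $\Gamma_0(\lambda)$ as in Lemma \ref{lem:limits}, vanishing at infinity from the corollary to Lemma \ref{lem:mult}, and the zero-energy results of Section \ref{sec:prelim}), together with the standard description of $K_1$ of a nonunital $C^*$-algebra by unitaries in the unitisation congruent to $\textup{Id}$ modulo the ideal. One minor correction: your parenthetical ``excluding resonances in dimension $4$'' is unnecessary for this lemma --- Theorem \ref{thm:scatmatzero} as proved in the paper yields $S(0)=\textup{Id}$ in dimension $4$ even in the resonant case, since the resonance term carries the factor $(a-\ln(\lambda)-i\pi/2)^{-1}\to 0$, so the full claim of the second sentence (which asserts $S(0)=\textup{Id}$ for $n=4$ with or without resonances) is covered.
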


Corollary \ref{cor:spectrip} and Lemma \ref{lem:ess-kay} tell us that we can pair the classes $[D_+]$ and $[S]$ to obtain an integer. See \cite[Section 8.7]{HR} for details. Sadly the fact that our algebra $C_0(\R^+)\ox\K$ is nonunital (both $C_0(\R^+)$ and $\K$!) means that we need to be careful about applying index pairing formulae. See \cite[Section 2.3]{CGRS2} for a description of the problems.

In \cite[Section 2.7]{CGRS2} it was shown that for spectral triples $(\A,\H,\D)$ over a nonunital algebra, the index pairing with the class of a unitary $u$ can be computed as ${\rm Index}(PuP-(\textup{Id}-P))$ where $P=\chi_{[0,\infty)}(\D)$ is the non-negative spectral projection of $\D$. 

In our setting, however, we will be using an approximation of the non-positive spectral projection of $D_+ \otimes \textup{Id}$ and we need to show that we still get a Fredholm operator.

\begin{lemma}
\label{lem:technical}
If $B = GG^*$ is invertible modulo compacts then $G$ is Fredholm.
\end{lemma}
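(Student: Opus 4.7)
Proof plan: The strategy is to transfer the Fredholm data of $B = GG^*$ directly to $G$ using two elementary Hilbert-space identities. Invertibility of $B$ modulo compacts is equivalent to $B$ being Fredholm, so I may take as given that $\ker B$ is finite-dimensional and $\textup{range}(B)$ is closed of finite codimension in the ambient Hilbert space $H$.

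The first step is to control the cokernel of $G$. From the identity
\begin{equation*}
\|G^* x\|^2 = \langle GG^* x, x\rangle = \langle Bx,x\rangle,\quad x\in H,
\end{equation*}
one reads off $\ker G^* = \ker GG^* = \ker B$, so $\dim \ker G^* < \infty$. Since $\textup{coker}(G) \cong \ker G^*$, the cokernel of $G$ is finite-dimensional.

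The second step is to show that $\textup{range}(G)$ is closed. From $\textup{range}(B) = \textup{range}(GG^*) \subseteq \textup{range}(G)$ and the fact that $\textup{range}(B)$ is closed with finite codimension, it follows by a routine subspace argument that $\textup{range}(G)$ is itself closed of finite codimension: decomposing $H = \textup{range}(B) \oplus F$ with $F$ finite-dimensional, we get $\textup{range}(G) = \textup{range}(B) \oplus (\textup{range}(G) \cap F)$, which is the direct sum of a closed subspace and a finite-dimensional one, hence closed. Combining the two steps gives that $G$ has closed range with finite-dimensional cokernel, which is exactly the Fredholm content required to make the index pairing in Section~\ref{sec:pairing} well-defined.

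I do not expect any real obstacle: the whole argument is essentially linear-algebraic once the equivalence of ``invertible modulo compacts'' and ``Fredholm'' is acknowledged. The only subtle point worth flagging is that finiteness of $\ker G$ itself is not forced by $B = GG^*$ being Fredholm in pure generality; however, in the setting in which this lemma is applied (an approximate non-positive spectral projection of $D_+\otimes\textup{Id}$ composed with the scattering unitary), the left kernel is controlled by the explicit structure of the operators involved, and combined with the abstract lemma one obtains the full Fredholm conclusion.
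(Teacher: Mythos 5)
Your argument is correct as far as it goes, and it takes a genuinely different route from the paper. The paper's proof is a one-line algebraic observation: writing $B=A+K$ with $A$ invertible and $K$ compact, one has $G(G^*A^{-1})=BA^{-1}=\textup{Id}+KA^{-1}$, so $G^*A^{-1}$ is a right inverse for $G$ modulo compacts. You instead extract the Fredholm data directly: $\ker G^*=\ker B$ is finite-dimensional via $\|G^*x\|^2=\langle Bx,x\rangle$, and $\textup{ran}(G)\supseteq\textup{ran}(B)$ forces $\textup{ran}(G)$ to be closed of finite codimension. Your version is more transparent about exactly what has been established; the paper's version is shorter and produces an explicit approximate parametrix. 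Importantly, both arguments prove precisely the same thing --- closed range and finite-dimensional cokernel, i.e.\ that $G$ is right-invertible modulo compacts --- and neither controls $\ker G$. The caveat you flag is therefore real and applies equally to the paper's own proof: as literally stated the lemma fails for a coisometry $G$ with infinite-dimensional kernel, since then $GG^*=\textup{Id}$ while $G$ is not Fredholm. In the application the gap is harmless, because the operator $G=\textup{Id}+T(S-\textup{Id})T^*$ agrees modulo compacts with the isometry $W_-$ (so $G^*G=\textup{Id}$ modulo compacts and $\ker G$ is finite-dimensional); equivalently, the symmetric computation of $G^*G$ yields $\textup{Id}-\tfrac12(\textup{Id}-t^2)$ modulo compacts as well. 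You were right to flag this, and your resolution --- that the left kernel must be controlled by the explicit structure of the operators in play --- is the correct one.
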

\begin{proof}
Write $B = A+K$ where $A$ is invertible and $K$ is compact. Then $GG^*A^{-1} = BA^{-1} = \textup{Id}+KA^{-1}$ and so $G$ is Fredholm with approximate right inverse $G^*A^{-1}$.
\end{proof}

\begin{thm}\label{thm:pairing}
Let $H = H_0+V$ be such that the wave operators exist, are complete and are of the form of Equation \eqref{eq:waveopform} with $S(0) = \textup{Id}$. Let $S$ be the corresponding scattering operator. We have the pairing
\begin{align*}
\langle [S], [D_+] \rangle &= -\textup{Index}(W_-) = N,
\end{align*}
where $N$ is the number of bound states of $H$ (eigenvalues counted with multiplicity).
\end{thm}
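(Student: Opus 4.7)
The plan is to pass to the spectral representation of $H_0$ via $F_0$, identify $W_-$ modulo compact operators with a Toeplitz-type operator built from a spectral projection of $D_+$ and the diagonalised scattering operator $\tilde S := F_0 S F_0^*$, and then compute its Fredholm index. Throughout, we set $P := \chi_{[0,\infty)}(D_+) \otimes \textup{Id}_{\mathcal{P}}$ and $Q := \textup{Id} - P$ on $\H_{spec}$.

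A first observation is that $\textup{Ind}(W_-) = -N$: by asymptotic completeness, $W_-$ is an isometry whose range is the absolutely continuous subspace of $H$, so $\ker W_- = 0$ and $\dim\ker W_-^*$ equals the dimension of the point-spectral subspace, which is $N$.

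Conjugating Theorem \ref{thm:main} by $F_0$ and using Equation \eqref{eq:dilationconj} gives
\begin{align*}
F_0 W_- F_0^* &= \textup{Id} + (\psi(D_+) \otimes \textup{Id}_{\mathcal{P}})(\tilde S - \textup{Id}) + K',
\end{align*}
where $K'$ is compact. The operator $\tilde S - \textup{Id}$ is multiplication by a continuous $\mathcal{K}(\mathcal{P})$-valued function of the spectral variable which vanishes at $\lambda = 0$ (by the hypothesis $S(0) = \textup{Id}$) and at $\lambda = \infty$ (by the corollary following Lemma \ref{lem:mult}), and so defines the class $[S] \in K_1(C_0(\R^+) \otimes \mathcal{K})$. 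On the other hand $\psi - \chi_{(-\infty, 0]}$ is continuous on $\R$ and decays exponentially at $\pm\infty$, hence lies in $L^2(\R) \cap C(\R)$. By the second statement of Lemma \ref{lem:f(x)g(nabla)}, the operator
\begin{align*}
\big((\psi - \chi_{(-\infty, 0]})(D_+) \otimes \textup{Id}_{\mathcal{P}}\big)(\tilde S - \textup{Id})
\end{align*}
is compact. Since $\chi_{(-\infty, 0]}(D_+) \otimes \textup{Id}_{\mathcal{P}} = Q$, we obtain the compact-perturbation identity
\begin{align*}
F_0 W_- F_0^* &\equiv P + Q\tilde S \pmod{\mathcal{K}(\H_{spec})}.
\end{align*}

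In block form relative to $\H_{spec} = P\H_{spec} \oplus Q\H_{spec}$ the operator $P + Q\tilde S$ is lower triangular with diagonal blocks $\textup{Id}_P$ and $Q\tilde S Q$, so its Fredholm index equals $\textup{Ind}(Q\tilde S Q)$. A Hilbert-transform-type argument (transporting $D_+$ to $-i\partial_x$ via Lemma \ref{lem:arr-plus} and noting that $\tilde S - \textup{Id}$ pulls back to a $C_0(\R, \mathcal{K})$-valued multiplication) shows $[P, \tilde S]$ is compact, so $P\tilde SP \oplus Q\tilde S Q$ differs from the unitary $\tilde S$ by a compact operator, and hence has index zero. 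Therefore $\textup{Ind}(Q\tilde S Q) = -\textup{Ind}(P\tilde S P)$. By the pairing formula from \cite[Section 2.7]{CGRS2}, $\langle [S], [D_+] \rangle = \textup{Ind}(P\tilde SP - Q) = \textup{Ind}(P\tilde S P)$. Assembling these identities, $-N = \textup{Ind}(W_-) = \textup{Ind}(Q\tilde S Q) = -\langle [S], [D_+]\rangle$, and the theorem follows.

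The main obstacle is justifying the two compactness statements, namely the replacement of $\psi(D_+)$ by the sharp projection $Q$ and the compactness of $[P, \tilde S]$; both rely crucially on the decay $\tilde S - \textup{Id} \in C_0(\R^+, \mathcal{K})$, which in turn uses $S(0) = \textup{Id}$. Subsidiary technical care is needed to handle the nonunital index pairing, for which \cite[Section 2.7]{CGRS2} supplies the appropriate Fredholm formulation; the block-triangular computation and the sign tracking are straightforward once these compactness statements are in hand.
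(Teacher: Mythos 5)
Your argument is correct, and it turns on the same key mechanism as the paper's proof: the difference between the smooth function of $D_+$ appearing in the wave-operator formula and a sharp spectral projection of $D_+$ is an $L^2$ function of $D_+$, so multiplying it against $\tilde S-\textup{Id}\in C_0(\R^+,\K(\Pp))$ (which needs both $S(0)=\textup{Id}$ and $S(\infty)=\textup{Id}$) gives a compact operator by the $f(x)g(\nabla)$ estimate of \cite{simon79}. The execution differs in the endgame. The paper writes $W_-\equiv\textup{Id}+T(S-\textup{Id})T^*$ with $T=\vp(D_n)$, checks Fredholmness by computing $W_-W_-^*$ modulo compacts, compares directly with $\textup{Id}+P_-(S-\textup{Id})P_-$, and handles the sign via $[2P_+-\textup{Id}]=-[2P_--\textup{Id}]$ in $K$-homology; you instead reduce $F_0W_-F_0^*$ to the block lower-triangular operator $P+Q\tilde S$, read off the index as that of the Toeplitz compression $Q\tilde SQ$, and flip the sign by the elementary identity $\textup{Ind}(P\tilde SP)+\textup{Ind}(Q\tilde SQ)=\textup{Ind}(\tilde S)=0$. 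Your route avoids the paper's $TST^*$ commutator algebra and the separate Fredholmness check (which is anyway automatic, since $W_-$ is an isometry with finite-dimensional cokernel), at the price of needing $[P,\tilde S]$ compact explicitly --- a fact the paper also invokes, deferring to \cite{CGRS2}, and which your Hardy-projection sketch correctly supplies. Two small points. First, $\psi-\chi_{(-\infty,0]}$ is \emph{not} continuous (it jumps at $0$), so Lemma \ref{lem:f(x)g(nabla)} cannot be quoted verbatim; but its proof, and \cite[Theorem 4.1]{simon79}, use only that the function is $L^2$ and decays at $\pm\infty$, so the conclusion stands --- the paper makes exactly the same move with the discontinuous function $\frac12(-\textup{sign}+\tanh)$. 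Second, your version of the intertwining relations ($W_-$ isometric, $\ker W_-^*$ equal to the point-spectral subspace, whence $\textup{Ind}(W_-)=-N$) is the correct one; the displayed relations at the end of the paper's proof have the two products $W_-W_-^*$ and $W_-^*W_-$ interchanged.
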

\begin{proof}
%We write
%\begin{align*}
%W_- &= \textup{Id}+\vp(D_n)(S-\textup{Id})+K \\
%&= \textup{Id}- \vp(D_n) + \vp(D_n)S + K \\
%&= \textup{Id}-\vp(D_n)+\vp(D_n)S\vp(D_n) +\vp(D_n)S(\textup{Id}-\vp(D_n))+K \\
%&= \textup{Id} - \vp(D_n) + \vp(D_n) S\vp(D_n) + \vp(D_n)(\textup{Id}-\vp(D_n))S + \vp(D_n) [S,\vp(D_n)] + K.
%\end{align*}
%We know that $[S,\vp(D_n)]$ is compact and thus we only need to show that the term $\vp(D_n)(\textup{Id}-\vp(D_n))S$ will make no contribution to the index. This follows from Lemma \ref{lem:technical}.
%
We know from \cite[Section 8.7]{HR} that the pairing can be computed as 
\begin{align*}
\langle [S], [ D_+] \rangle &=  \textup{Index}(PSP-(\textup{Id}-P)) = \textup{Index}(PSP+(\textup{Id}-P)),
\end{align*}
where $P$ is the non-negative spectral projection for $D_+ \otimes \textup{Id}$. The second equality follows from the homotopy $t \mapsto \e^{i \pi (1-t)} (\textup{Id}-P)$. We note that the pairing can also be computed using the non-positive spectral projection $P_-$ for $D_+\otimes \textup{Id}$, at the expense of a minus sign, since $[2P_+-1]=-[2P_--1]\in K^1(C_0(\R^+,\K))$. The result is 
\begin{align*}
\langle [S], [ D_+] \rangle &=  -\textup{Index}(P_-SP_--(\textup{Id}-P_-)) = -\textup{Index}(P_-SP_-+(\textup{Id}-P_-)).
\end{align*}
We will use this second form, and for convenience drop the subscript `$-$'.
To compute the pairing, we need to consider the wave operator in the spectral representation. We recall that the operator $D_n$ satisfies $F_0 D_n F_0^* = - 2 (D_+ \otimes \textup{Id})$.

So we need to be able to approximate $P$ with the operator $\vp(-2 D_+) \otimes \textup{Id}$. We initially work with $D_n$ for convenience. We set $T = \vp(D_n)$ and $t = \tanh{(\pi D_n)}$ for simplicity. A quick computation shows that
\begin{align*}
TT^* &= T^*T = \frac12(\textup{Id}+t).
\end{align*}
%and so (modulo compacts) we have
%\begin{align*}
%(\textup{Id}-T+TST^*)(\textup{Id}-T^*+T^*S^*T) &= \textup{Id}+\frac{1}{4} \cosh{(\pi D_n)}^{-2}
%\end{align*}
%which is invertible and so $\textup{Id}-T+TST^*$ is Fredholm.
Starting from $W_-$ there is a compact operator $K$ such that
\begin{align*}
W_- &= \textup{Id}-\frac12 (\textup{Id}+t)+TST^*+K.
\end{align*}
We can also find a compact $K_1$ such that
\begin{align*}
\frac12 (\textup{Id}-t) TS^*T^* &= \frac12 (\textup{Id}-t) TT^* S^* + \frac12 (\textup{Id}-t) [T,S^*]T^* \\
&= \frac{1}{4} (\textup{Id}-t^2)S^* + \frac12 (\textup{Id}-t) [T,S^*]T^* \\
&= \frac{1}{4}(\textup{Id}-t^2)+\frac{1}{4}(\textup{Id}-t^2)(S^*-\textup{Id}) +\frac12 (\textup{Id}-t) [T,S^*]T^* \\
&:= \frac{1}{4}(\textup{Id}-t^2) + K_1.
\end{align*}
Similarly we find compacts $K_2,K_3$ such that
\begin{align*}
\frac12 TST^*(\textup{Id}-t) &= \frac{1}{4}(\textup{Id}-t^2) +K_2, \\
TST^*TS^*T^* &= \frac{1}{4} (\textup{Id}+t)^2 +K_3.
\end{align*}

We can then check (with all equalities mod compacts) that 
\begin{align*}
W_-W_-^*=(\textup{Id}-\frac12 (\textup{Id}+t)+TST^*)(\textup{Id}-\frac12 (\textup{Id}+t)+TS^*T^*) &= \textup{Id}- \frac12 (\textup{Id}-t^2)
\end{align*}
which is invertible and hence $\textup{Id}-\frac12 (\textup{Id}+t)+TST^*$ is invertible modulo compacts by Lemma \ref{lem:technical}. Now we compare $\textup{Id}-\frac12 (\textup{Id}+t)+TST^*$ and $PSP+(\textup{Id}-P)$ to see that they have the same Fredholm index.
Taking commutators we have (mod compacts) the equalities
\begin{align*}
\textup{Id}-\frac12 (\textup{Id}+t)+TST^*
&=\textup{Id}-TT^*+TST^*=\textup{Id}+T(S-\textup{Id})T^*
\end{align*}
and
\[
PSP+(\textup{Id}-P)=\textup{Id}+P(S-\textup{Id})P.
\]
Define in the spectral representations the operators $\tilde{T}$ and $\tilde{t}$ by $\tilde{T} = F_0 T F_0^* = \frac12(\textup{Id}-\tanh{(2\pi D_+)} - i\cosh{(2\pi D_+)}^{-1}) \otimes \textup{Id}$ and $\tilde{t} = F_0 t F_0^* = -\tanh{(2\pi D_+)} \otimes \textup{Id}$. Taking the difference, and using $P=\frac12(\textup{Id}-\textup{sign}(D_+))$ we thus have
\begin{align}
&(P \otimes \textup{Id})S(P \otimes \textup{Id})+(\textup{Id}-(P \otimes \textup{Id}))-\big(\textup{Id}-\tilde{T}\tilde{T}^*+\tilde{T}S\tilde{T}^*\big)\nonumber \\
&=P(S-\textup{Id})P-\tilde{T}(S-\textup{Id})\tilde{T}^*\nonumber\\
&=(P-\tilde{T}\tilde{T}^*)(S-\textup{Id})\bmod\mbox{compacts}\nonumber\\
&=\frac12(-\textup{sign}+\tanh)(D_+)(S-\textup{Id})\bmod\mbox{compacts}.
\label{eq:difference}
\end{align}
We note that the second equality, where we have commuted the exact spectral projection with $S$ up to compacts, is justified by the arguments in \cite[Section 2.7]{CGRS2}.

To complete the argument, observe that $\frac12(-\textup{sign}+\tanh)$ is an $L^2$ function, and for any compactly supported function $\chi\in C_c(\R^+)$ we have $\lambda\mapsto \chi(\lambda)\Vert S(\lambda)-\textup{Id}\Vert_{\B(\mathcal{P})}$ is $L^2$ as well. Hence
\begin{align*}
&\frac12(-\textup{sign}+\tanh)(D_+)\chi(S(\cdot)-\textup{Id})\\
&=\frac12(-\textup{sign}+\tanh)(D_+)\chi\Vert(S(\cdot)-\textup{Id})\Vert_{\mathcal{B}(\mathcal{P})} \Vert(S(\cdot)-\textup{Id})\Vert_{\mathcal{B}(\mathcal{P})}^{-1}(S(\cdot)-\textup{Id})
\end{align*}
is a product of $L^2$ functions, one of $D_+$ and one of $H_0$, composed with a uniformly bounded compact operator-valued function $\Vert(S(\cdot)-\textup{Id})\Vert_{\mathcal{B}(\mathcal{P})}^{-1}(S(\cdot)-\textup{Id})$. Applying the standard $f(x)g(\nabla)$ result 
\cite[Theorem 4.1]{simon79} we obtain a compact operator times a uniformly bounded operator, which is then compact.
As $\lambda \mapsto \Vert(S(\lambda)-\textup{Id})\Vert_{\B(\mathcal{P})}$ is continuous and vanishes at $\lambda=0,\infty$, we can take an approximate unit $\chi_m\in C_c(\R^+)$ and see that 
\[
\frac12(-\textup{sign}+\tanh)(D_+)\chi_m\Vert(S(\cdot)-\textup{Id})\Vert_{\mathcal{B}(\mathcal{P})}
\]
converges in norm. Hence the limit $\frac12(-\textup{sign}+\tanh)(D_+)\Vert(S(\cdot)-\textup{Id})\Vert_{\mathcal{B}(\mathcal{P})}$ is compact.

%  The function $(\textup{sign}-\tanh)(D_+)$ is square integrable and thus by Lemma \ref{lem:f(x)g(nabla)} (see also \cite[Theorem 4.1]{simon79}) the difference is compact.
Since the difference \eqref{eq:difference} is compact, the two Fredholm operators $W_-$ and $P(S-\textup{Id})P$ have the same index.
The fact that $\textup{Ind}(W_-) = - N$ follows immediately from the relations
\begin{align*}
W_-W_-^* &= \textup{Id} \quad \textup{ and } \quad W_-^*W_- = \textup{Id}-P_p(H),
\end{align*}
where $P_p(H)$ denotes the projection onto the point spectrum of $H$. 
\end{proof}

\begin{cor}
Let $V_0$ and $V_1$ be such that the wave operators exist, are complete and of the form of Equation \eqref{eq:waveopform}. Suppose further that $S_0(0) = S_1(0) = \textup{Id}$, where $S_0$ and $S_1$ denote the scattering operators for the potentials $V_0$ and $V_1$. If the number of eigenstates (counted with multiplicity) for $V_0$ differs from that of $V_1$ then their scattering matrices are not (stably) homotopic. 
\end{cor}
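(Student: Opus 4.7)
The plan is to argue by contrapositive: assume $S_0$ and $S_1$ are stably homotopic and deduce that the numbers of bound states for $V_0$ and $V_1$ must coincide. Stable homotopy of unitaries $S_0, S_1 \in (C_0(\R^+)\otimes\K)^\sim$ is precisely the statement that they determine the same class in $K_1(C_0(\R^+)\otimes\K)$, so the hypothesis $[S_0] = [S_1]$ is what I would exploit.

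First I would note that the standing hypothesis $S_0(0) = S_1(0) = \textup{Id}$ (together with the fact, proved in Theorem \ref{thm:scatmatzero} and its corollary, that $S_j(\infty) = \textup{Id}$ and $S_j - \textup{Id}$ is compact-operator-valued and continuous in $\lambda$) is exactly what guarantees via Lemma \ref{lem:ess-kay} that each $S_j$ genuinely represents a class $[S_j] \in K_1(C_0(\R^+)\otimes\K)$ rather than merely an element of the multiplier algebra. Thus the $K$-theoretic comparison makes sense.

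Next I would invoke Theorem \ref{thm:pairing}, which identifies
\[
\langle [S_j], [D_+]\rangle = N_j,
\]
where $N_j$ is the number of bound states (with multiplicity) of $H_j = H_0 + V_j$. The index pairing $K_1(C_0(\R^+)\otimes\K) \times K^1(C_0(\R^+)\otimes\K) \to \Z$ is well-defined on $K$-theory classes, so if $[S_0] = [S_1]$ then $N_0 = \langle [S_0], [D_+]\rangle = \langle [S_1], [D_+]\rangle = N_1$. Contrapositively, $N_0 \neq N_1$ forces $[S_0] \neq [S_1]$, i.e.\ $S_0$ and $S_1$ are not stably homotopic as unitaries in (any matrix amplification of) the unitisation of $C_0(\R^+)\otimes\K$.

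There is essentially no obstacle here since all the heavy lifting is done by Theorem \ref{thm:pairing} and the well-definedness of the index pairing on $K$-theory; the only point that deserves a brief mention is to justify that "stably homotopic" is equivalent to representing the same $K_1$-class, which is standard (see e.g.\ \cite[Section 8.7]{HR}). The proof is therefore at most a few lines of exposition after these references are assembled.
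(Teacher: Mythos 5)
Your proposal is correct and follows exactly the route the paper intends: the corollary is stated without proof as an immediate consequence of Theorem \ref{thm:pairing}, and your argument --- stable homotopy implies $[S_0]=[S_1]$ in $K_1(C_0(\R^+)\otimes\K)$, the index pairing with $[D_+]$ is well-defined on classes, and Theorem \ref{thm:pairing} identifies that pairing with the number of bound states, so $N_0\neq N_1$ forces $[S_0]\neq[S_1]$ --- is precisely the intended one.
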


\begin{cor}\label{cor:norm-cont}
Let $V_0$ and $V_1$ be such that the wave operators exist, are complete and of the form of Equation \eqref{eq:waveopform}. Suppose further that $S_0(0) = S_1(0) = \textup{Id}$, where $S_0$ and $S_1$ denote the scattering operators for the potentials $V_0$ and $V_1$. Consider the path $V_t = (1-t)V_0 + t V_1$ for $t \in [0,1]$ with corresponding scattering operators $S_t$. If the number of eigenstates (counted with multiplicity) for $V_0$ differs from that of $V_1$, then the path $S_t$ is not norm continuous.
\end{cor}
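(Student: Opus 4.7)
The plan is to deduce this corollary from the preceding one by a contradiction argument. Suppose that $t\mapsto S_t$ is norm continuous from $[0,1]$ into $\mathcal{B}(\H_{spec})$. By the preceding corollary and the hypothesis $N_0\neq N_1$, we already know that $[S_0]\neq[S_1]$ in $K_1(C_0(\R^+)\ox\K)$. The plan is to show that norm continuity of $S_t$ would force $[S_0]=[S_1]$, yielding the required contradiction.

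To make this rigorous I would work via Theorem \ref{thm:pairing}: the index pairing is realised as the Fredholm index of $T_i:=P_-S_iP_-+(\textup{Id}-P_-)$, where $P_-$ is the non-positive spectral projection of $D_+\otimes\textup{Id}$ on $\H_{spec}$. Since $P_-$ is fixed, norm continuity of $S_t$ promotes to norm continuity of $T_t:=P_-S_tP_-+(\textup{Id}-P_-)$ in $\mathcal{B}(\H_{spec})$, and the Fredholm index is locally constant on the norm-open set of Fredholm operators. Provided $T_t$ remains Fredholm throughout $[0,1]$, one concludes $N_0=\textup{Ind}(T_0)=\textup{Ind}(T_1)=N_1$, contradicting the hypothesis.

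The main obstacle lies in establishing Fredholmness of $T_t$ for intermediate $t\in(0,1)$, since the potentials $V_t$ need not satisfy Assumption \ref{ass:best-ass} and the scattering operators $S_t$ need not satisfy $S_t(0)=\textup{Id}$. The compactness of $[P_-,S_t]$, which underlies Fredholmness of $T_t$ and was established at the endpoints in the proof of Theorem \ref{thm:pairing} via the $f(x)g(\nabla)$ argument applied to $\tfrac12(-\textup{sign}+\tanh)$, is not immediate here. A cleaner path is to work directly at the level of $K$-theory: a norm continuous path of unitaries in $\mathcal{B}(\H_{spec})$ provides a homotopy of the induced $K_1$-classes in the unitisation of $C_0(\R^+)\ox\K$, so norm continuity of $S_t$ would force $[S_0]=[S_1]$, giving the contradiction via the preceding corollary.
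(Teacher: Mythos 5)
Your overall strategy coincides with the paper's: the paper gives no separate proof of Corollary \ref{cor:norm-cont}, treating it as an immediate consequence of the preceding corollary via homotopy invariance of the $K_1$-class (equivalently, local constancy of the Fredholm index computed in Theorem \ref{thm:pairing}). You have correctly located the only real issue, namely control of the intermediate operators $S_t$. However, your proposed ``cleaner path'' does not actually resolve it, and as literally stated it is false: a norm continuous path of unitaries in $\mathcal{B}(\H_{spec})$ does \emph{not} automatically implement a homotopy in the unitary group of $(C_0(\R^+)\ox\K)^\sim$, because the intermediate $S_t$ need not lie in that unitisation. (If mere norm continuity in $\mathcal{B}(\H_{spec})$ sufficed, every pair of unitaries would be $K_1$-equivalent, since the unitary group of $\mathcal{B}(\H_{spec})$ is connected.) The membership $S_t-\textup{Id}\in C_0(\R^+)\ox\K$ is exactly the same condition you flagged as problematic in the Fredholm formulation --- it is what guarantees $[P_-,S_t]$ is compact and hence that $P_-S_tP_-+(\textup{Id}-P_-)$ is Fredholm --- so the two formulations stand or fall together.

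To close the gap one should argue directly that the intermediate scattering operators do lie in $\textup{Id}+C_0(\R^+)\ox\K$. Since $V_t=(1-t)V_0+tV_1$ satisfies $|V_t(x)|\leq C(1+|x|)^{-\rho}$ whenever $V_0$ and $V_1$ do, Theorem \ref{thm: stationary scattering operator} gives fibrewise compactness of $S_t(\lambda)-\textup{Id}$ and norm continuity in $\lambda$, the corollary following Lemma \ref{lem:mult} gives $S_t(\infty)=\textup{Id}$, and Theorem \ref{thm:scatmatzero} (together with the quoted two-dimensional result) gives $S_t(0)=\textup{Id}$ in the dimensions covered by Assumption \ref{ass:best-ass}; only dimension $3$ with an intermediate zero-energy resonance (and dimension $1$) genuinely escapes. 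With that membership in hand, either of your two formulations completes the proof, and one additionally recovers the paper's remark that the failures of norm continuity occur precisely at the parameter values where the index, i.e.\ the number of bound states, jumps.
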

In fact the proof shows that there are a discrete number of points at which the path $S_t$ fails to be norm continuous, corresponding to `jumps' in the number of eigenvalues for the potential $V_t$. The norm holomorphy (which implies norm continuity) of the scattering operator as a function of $t$ is discussed in \cite[Theorem 4.2]{bruning82} where an equivalent condition to holomorphy of $S_t$ is given. The points of failure of norm continuity in Corollary \ref{cor:norm-cont} are also points of failure of holomorphy in \cite[Theorem 4.2]{bruning82}. In the case of a Rollnik class potential on $\R^3$ holomorphy of the scattering matrix as a function of $t$ has been studied in \cite[Theorem 6.1]{kato66} and \cite[Theorem 5.2]{bruning82}.

The case $S(0) \neq \textup{Id}$ cannot be handled by the simple pairing described in Theorem \ref{thm:pairing}, a more complicated object than simply the dilation operator is required to pair with the class of the scattering matrix. Such results will be discussed elsewhere.

\end{document}